\documentclass[11pt,leqno]{amsart}
\usepackage{amssymb}
\usepackage{amsfonts}
\usepackage{amsthm}
\usepackage{mathtools}
\usepackage{todonotes}

\setlength{\textwidth}{\paperwidth}
\addtolength{\textwidth}{-2in}
\calclayout

\usepackage{amsthm}

\usepackage[pagebackref,colorlinks,citecolor=blue,linkcolor=blue]{hyperref}

\usepackage{cite}   

\usepackage{amsmath,bbm,amssymb,amsxtra}

\def\Xint#1{\mathchoice
	{\XXint\displaystyle\textstyle{#1}}
	{\XXint\textstyle\scriptstyle{#1}}
	{\XXint\scriptstyle\scriptscriptstyle{#1}}
	{\XXint\scriptscriptstyle\scriptscriptstyle{#1}}
	\!\int}

\def\XXint#1#2#3{{\setbox0=\hbox{$#1{#2#3}{\int}$ }
		\vcenter{\hbox{$#2#3$ }}\kern-.6\wd0}}
\def\dashint{\Xint-}

\newtheorem{theorem}{Theorem}[section]
\newtheorem{lemma}[theorem]{Lemma}
\newtheorem{proposition}[theorem]{Proposition}
\newtheorem{remark}[theorem]{Remark}
\newtheorem{corollary}[theorem]{Corollary}
\newtheorem{definition}[theorem]{Definition}
\newtheorem{example}[theorem]{Example}
\newtheorem{question}[theorem]{Question}
\newtheorem{open}[theorem]{Open problem}
\numberwithin{equation}{section}

\title{Existence and uniqueness of limits at infinity for bounded variation functions}

\author{Panu Lahti}
\address{Academy of Mathematics and Systems Science, Chinese Academy of Sciences, Beijing 100190, PR China}
\email{panulahti@amss.ac.cn}

\author{Khanh Nguyen}
\address{Academy of Mathematics and Systems Science, Chinese Academy of Sciences, Beijing 100190, PR China}
\email{khanhnguyen@amss.ac.cn}

\subjclass[2020]{ 46E36, 30L99, 26B30\\ Key words and phrases: Limit at infinity, bounded variation, doubling, Poincar\' e inequality, modulus, metric measure space}

\begin{document}
	\maketitle
	
	\begin{abstract}
		In this paper, we study  the existence of limits at infinity along almost every infinite curve for the upper and lower approximate limits of  bounded variation functions on complete unbounded metric measure spaces. We prove that if the measure is  doubling and supports a $1$-Poincar\'e inequality, then for every bounded variation function $f$ and for $1$-a.e. infinite curve $\gamma$, for both the upper approximate limit $f^\vee$ and the lower approximate limit $f^\wedge$ we have that 
		\[
		\lim_{t\to+\infty}f^\vee(\gamma(t)) {\rm \ \ and\ \ }\lim_{t\to+\infty}f^\wedge(\gamma(t))
		\]
		exist and are equal to the same finite value. We give examples showing that the conditions of the doubling property of the measure and a $1$-Poincar\'e inequality are needed for the existence of limits. 
		Furthermore, we establish a characterization for strictly positive $1$-modulus of the family of all infinite curves in terms of bounded variation functions.
		These generalize  results for Sobolev functions given in \cite{KN23}.
	\end{abstract}

	\tableofcontents
	\section{Introduction}
	\subsection{Existence of limits at infinity}\ 
	
	Let us first recall a result from \cite{KN23}. Under the assumption that $X=(X,d,\mu)$ is a doubling metric measure space supporting a $p$-Poincar\'e inequality and satisfying the annular chain condition,
	%as defined in Section \ref{sec-dyadic}
	with $1\leq p<+\infty$, Theorem 1.1 in \cite{KN23} gives that for every $f\in \dot N^{1,p}(X)$, there is a constant $c_f\in\mathbb R$ so that the limit 
	\begin{equation}
		\label{eq1.1-1511} \lim_{t\to+\infty}f(\gamma(t)) \text{\rm \ \ exists and is equal to $c_f$ for  $p$-a.e. infinite curve $\gamma\in \Gamma^{+\infty}$.}
	\end{equation}
	By infinite curves $\gamma$ we mean that $\gamma\setminus B\neq\emptyset$ for all balls $B$;
	the  notions of doubling measure, Poincar\'e inequality, the homogeneous $p$-Sobolev spaces $\dot N^{1,p}(X)$, etc.
	% infinite curves $\gamma$, the family $\Gamma^{+\infty}$ of all infinite curves
	are all given in Section \ref{section2}.
	%and here \eqref{eq1.1-1511} holds for $1$-a.e. $\gamma\in\Gamma^{+\infty}$ means that the limit $\lim_{t\to+\infty}f(\gamma(t))=c_f$ holds true outside a zero $1$-modulus family of infinite curves where the notion of $1$-modulus is given in Section $\ref{sec2.2}$.
	For each function $f$, if $\eqref{eq1.1-1511}$ holds true then we say that the existence and uniqueness of limits at infinity
	hold for $f$ along $p$-a.e. infinite curve.
	
	The aim of this paper is to extend the result \eqref{eq1.1-1511} to functions of bounded variation. For a subset $A\subseteq X$, let ${\rm BV}(A)$ be the collection of all integrable functions on $A$ for which the total variation \eqref{2.8} is finite.
	We write $f\in {\rm BV}_{\rm loc}(X)$ if for each $x\in X$ there is a ball $B(x,r)$ with center $x$ and radius $r>0$ so that  $f\in {\rm BV}(B(x,r))$. We set $\dot{\rm BV}(X)$ to be the collection of all locally integrable functions $f\in {\rm BV}_{\rm loc}(X)$ for which the total variation $\|D f\|(X)$
	%as defined in \eqref{eq2.7-2711}
	is finite.
	We will study the existence and uniqueness of  the  limits
	\begin{equation}
		\label{eq1.1-2808} \lim_{t\to+\infty} f^\vee(\gamma(t)){\rm \ \ and\ \ } \lim_{t\to+\infty} f^\wedge(\gamma(t))  
	\end{equation}
	for $\gamma\in\Gamma^{+\infty}$ and
	$f\in \dot{\rm BV}(X)$.  Here  $f^\vee(x)$ and $f^\wedge(x)$ are the upper and lower approximate limits of $f$ at $x$ as defined in \eqref{eq2.6-1511}-\eqref{equ2.12-0912}.
	%respectively.
	%, and here  $f^*$  is the Lebesgue representative of $f$ as  defined by \eqref{eq-lebes}.
	If $f\in \dot N^{1,1}(X)$ then the existence and uniqueness of limits at infinity  of $f^\vee$ and $f^\wedge$
	along $1$-a.e. infinite curve  hold because of Proposition \ref{prop2.2-1511} and
	\eqref{eq:Mod and H}.
	
	In \cite{KN23}, from the definition of weak upper gradients as given in \eqref{def-upper-gradient} 
	it is easy to get the existence of the limits \eqref{eq1.1-2808} for $f\in \dot N^{1,1}(X)$ and for $1$-a.e. infinite curve,
	since  $f\in \dot N^{1,1}(X)$ is absolutely continuous on   $1$-a.e. infinite curve $\gamma$.
	%with the line integral of the minimal $1$-weak upper gradient of $f$ over such $\gamma$ is finite where the definition of
	%the minimal $1$-weak upper gradient is defined in Section \ref{sec2.2}.
	Then we are interested in the following question:
	\begin{question}\label{question1}
		Let $f\in \dot{\rm BV}(X)$.
		\begin{center}
			Does the existence of the limits \eqref{eq1.1-2808} hold true for ${\rm AM}$-a.e. infinite curve $\gamma$?
		\end{center}
		Here the ${\rm AM}$-modulus is defined in \eqref{eq2.13-1711}.
	\end{question}
	Firstly, we give the following theorem to understand Question \ref{question1}. 
	\begin{theorem}\label{theorem1.2}
		Suppose that $(X,d,\mu)$ is an unbounded metric measure space.
		Let $f\in \dot{\rm BV}(X)$. Then for ${\rm AM}$-a.e. infinite curve $\gamma$ parameterized by arc-length, there exist $c_\gamma\in\mathbb R$ and a subset $N_{\gamma}\subset[0,+\infty)$ with $\mathcal L^1(N_\gamma)=0$ such that both limits
		\begin{equation}
			\label{1.3-8thMar}\notag
			\lim_{t\to+\infty, t\notin N_\gamma}f^\vee(\gamma(t)) {\rm \ \ and \ \ } \lim_{t\to+\infty, t\notin N_\gamma}f^\wedge(\gamma(t))
		\end{equation}
		exist and are equal to $c_\gamma$.
		Here $\mathcal L^1$ is the $1$-Lebesgue measure on $[0,+\infty)$.
	\end{theorem}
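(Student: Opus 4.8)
The strategy parallels that of \cite{KN23} for Sobolev functions, but since a $\dot{\rm BV}$ function need not be absolutely continuous along $1$-a.e.\ infinite curve, in place of the weak upper gradient we use the relaxed definition of the total variation \eqref{2.8} together with Fuglede-type properties of the AM-modulus \eqref{eq2.13-1711}; crucially, no doubling or Poincar\'e assumption enters. First I would fix locally Lipschitz functions $f_i$ with $f_i\to f$ in $L^1_{\rm loc}(X)$ and $V:=\sup_i\int_X\mathrm{Lip}\,f_i\,d\mu<\infty$ (a sequence with $\int_X\mathrm{Lip}\,f_i\,d\mu\to\|Df\|(X)$ exists by \eqref{2.8}, but only finiteness of $V$ is needed here). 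After passing to a subsequence I may also assume $\int_{B_i}|f_i-f|\,d\mu\le 4^{-i}$ along a fixed exhaustion $B_1\subset B_2\subset\cdots$ of $X$ by balls, where from now on $f$ denotes the representative with $f=f^\vee=f^\wedge$ $\mu$-a.e.

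Next I would record the two AM-modulus facts that do the work. \emph{(i) The curve energy is finite along AM-a.e.\ curve:} for every $N$ the sequence $(\tfrac1N\mathrm{Lip}\,f_i)_i$ is admissible for the family of infinite curves $\gamma$ with $\liminf_i\int_\gamma\mathrm{Lip}\,f_i\,ds=+\infty$, so that family has AM-modulus at most $V/N$; letting $N\to\infty$, for AM-a.e.\ infinite curve $\gamma$ there is a subsequence $(i_k)$, depending on $\gamma$, with $M_\gamma:=\sup_k\int_\gamma\mathrm{Lip}\,f_{i_k}\,ds<\infty$. \emph{(ii) $f_i$ converges along AM-a.e.\ curve:} for a fixed ball $B_j$ the bound $\sum_i\int_{B_j}|f_i-f|\,d\mu<\infty$ gives, on applying the same device to the partial sums $\tfrac1N\sum_{i\le m}|f_i-f|\mathbf{1}_{B_j}$, that $\sum_i\int_\gamma|f_i-f|\mathbf{1}_{B_j}\,ds<\infty$ for AM-a.e.\ $\gamma$; by Tonelli this forces $f_i(\gamma(t))\to f(\gamma(t))$ for $\mathcal L^1$-a.e.\ $t\in\gamma^{-1}(B_j)$, and exhausting $X$ yields, for AM-a.e.\ infinite curve $\gamma$, that $f_i(\gamma(t))\to f(\gamma(t))$ for $\mathcal L^1$-a.e.\ $t\in[0,+\infty)$. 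Finally, since the jump set $S_f=\{f^\wedge<f^\vee\}$ is $\mu$-negligible it lies in open sets of arbitrarily small measure, so testing with large multiples of indicator functions of shrinking open neighbourhoods of $S_f$ shows that the family of curves with $\mathcal L^1(\gamma^{-1}(S_f))>0$ has AM-modulus zero; hence for AM-a.e.\ infinite curve $\mathcal L^1(\gamma^{-1}(S_f))=0$, and therefore $f^\vee(\gamma(t))=f^\wedge(\gamma(t))=f(\gamma(t))$ for $\mathcal L^1$-a.e.\ $t$.

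Now I would fix an infinite curve $\gamma$, parameterized by arc length on $[0,+\infty)$, lying in the AM-full set carved out above, and conclude. Each $f_{i_k}$ is Lipschitz on bounded sets, so $f_{i_k}\circ\gamma$ is locally absolutely continuous with $|(f_{i_k}\circ\gamma)'(t)|\le\mathrm{Lip}\,f_{i_k}(\gamma(t))$ for a.e.\ $t$; integrating, $\mathrm{Var}(f_{i_k}\circ\gamma;[0,T])\le\int_\gamma\mathrm{Lip}\,f_{i_k}\,ds\le M_\gamma$ for all $T$ and all $k$. Since $f_{i_k}\circ\gamma\to u:=f\circ\gamma$ pointwise $\mathcal L^1$-a.e.\ on $[0,+\infty)$, lower semicontinuity of the essential variation under a.e.\ convergence gives $\mathrm{essVar}(u;[0,T])\le M_\gamma$ for every $T$, hence $\mathrm{essVar}(u;[0,+\infty))\le M_\gamma<\infty$. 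Consequently $u$ is a.e.\ finite and admits a representative $\bar u$ of bounded variation on $[0,+\infty)$, a difference of two bounded monotone functions, so $c_\gamma:=\lim_{t\to+\infty}\bar u(t)$ exists and is finite. Putting $N_\gamma:=\{t\ge 0:\ \bar u(t)\ne f^\vee(\gamma(t))\ \text{or}\ \bar u(t)\ne f^\wedge(\gamma(t))\}$, which is $\mathcal L^1$-null by step (ii) together with $\bar u=u$ a.e., we obtain that $f^\vee(\gamma(t))$ and $f^\wedge(\gamma(t))$ both tend to $c_\gamma$ as $t\to+\infty$ with $t\notin N_\gamma$, which is the assertion.

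The purely analytic ingredients here --- Lipschitz composed with a rectifiable curve is absolutely continuous, lower semicontinuity of variation, a bounded monotone function has a limit at $+\infty$ --- are routine. \textbf{The main obstacle} is the bookkeeping with the AM-modulus in the noncompact setting: one must extract subsequences that simultaneously control $\int_\gamma\mathrm{Lip}\,f_{i_k}\,ds$ over the \emph{entire} infinite curve and realize $f_{i_k}\circ\gamma\to f\circ\gamma$ a.e., uniformly over AM-a.e.\ infinite curve, and one must argue that AM-a.e.\ infinite curve spends no $\mathcal L^1$-time in the $\mu$-negligible jump set. These are exactly the points where the AM-modulus, rather than the ordinary $1$-modulus, is essential, since a general $f\in\dot{\rm BV}(X)$ is not absolutely continuous along $1$-a.e.\ curve. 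If desired, steps (i)--(ii) can be isolated as a lemma stating that, along AM-a.e.\ infinite curve, $f^\vee\circ\gamma$ and $f^\wedge\circ\gamma$ agree $\mathcal L^1$-a.e.\ with a single function of bounded variation on $[0,+\infty)$, after which Theorem \ref{theorem1.2} is immediate.
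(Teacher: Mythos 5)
Your proposal is correct and follows the same skeleton as the paper's proof: approximate $f$ by a sequence with uniformly bounded gradient mass, use the cheapness of the ${\rm AM}$-modulus to obtain, for ${\rm AM}$-a.e.\ infinite curve $\gamma$, a curve-dependent subsequence with $\sup_k\int_\gamma g_{f_{i_k}}\,ds<+\infty$ together with pointwise $\mathcal L^1$-a.e.\ convergence along $\gamma$, observe that ${\rm AM}$-a.e.\ curve spends zero length in $S_f$ so that $f^\vee\circ\gamma=f^\wedge\circ\gamma=f\circ\gamma$ off a null set $N_\gamma$, and then show that $f\circ\gamma$ is forced to have a limit at infinity. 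The only genuine divergence is in the finishing step: the paper treats $(g_{f_{i_k}}\circ\gamma)\,dt$ as a bounded sequence of measures on $[0,+\infty)$, extracts a weak* limit $\nu$ with $\nu([0,+\infty))<+\infty$, and deduces the Cauchy property of $f(\gamma(t))$ from $|f(\gamma(t_1))-f(\gamma(t_2))|\le\nu([t_1,t_2])$ for $t_1,t_2\notin N_\gamma$; you instead bound the pointwise variation of $f_{i_k}\circ\gamma$ on every $[0,T]$ by $M_\gamma$, invoke lower semicontinuity of the essential variation under a.e.\ convergence, and use the classical fact that a function of bounded essential variation on $[0,+\infty)$ is a.e.\ a difference of bounded monotone functions and hence has a limit at infinity. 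The two finishes carry the same content; yours trades weak* compactness of measures for the (equally standard) lower-semicontinuity lemma, and your insistence on genuine locally Lipschitz approximants, whose pointwise Lipschitz constants are true upper gradients, sidesteps the small extra step of discarding the $1$-exceptional families on which the weak upper gradient inequality for $g_{f_i}$ could fail. One shared caveat: both arguments use $\mu(S_f)=0$, i.e.\ $f=f^\vee=f^\wedge$ $\mu$-a.e., which the paper also asserts without comment in this generality; since you make exactly the same assumption, this is not a gap relative to the paper's proof.
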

	
	By Examples \ref{ex3.3-0912}, \ref{ex3.4-1612}, \ref{ex3.5-8thMar}, \ref{ex3.6-30th03}, \ref{ex3.7-30thMar},  the conclusion of Theorem \ref{theorem1.2} is sharp in that the unbounded null set $N_\gamma$  cannot be taken to be empty
	%to get the existence of limits \eqref{eq1.1-2808}
	in general. 
	We construct a space that satisfies neither the doubling condition nor a $1$-Poincar\'e inequality in   Examples \ref{ex3.3-0912}-\ref{ex3.4-1612}, a space that supports a $1$-Poincar\'e inequality but the doubling condition fails in Example \ref{ex3.5-8thMar}, and a space that is doubling but does not support a $1$-Poincar\'e inequality in Examples \ref{ex3.6-30th03}-\ref{ex3.7-30thMar}, so that  limits at infinity of $f^\vee, f^\wedge$ along  infinite curves $\gamma$ may fail to exist for ${{\rm AM}}$-a.e. $\gamma$. 
	% Here $f^*$ means the Lebesgue representative of $f$ defined in \eqref{eq-lebes}. 
	Thus the answer is ``NO" in general to Question \ref{question1}.
	
	Now, we give a positive answer to Question \ref{question1} under the conditions of the doubling property of the measure and the support of a $1$-Poincar\'e inequality, as our next main theorem below.
	
	\begin{theorem}\label{theorem1.5}
		Suppose that $(X,d,\mu)$ is a complete doubling unbounded metric measure space supporting a $1$-Poincar\'e inequality.
		Let $f\in \dot{\rm BV}(X)$. 
		Then for  ${1}$-a.e. $\gamma\in\Gamma^{+\infty}$, there exists $c_\gamma\in\mathbb R$ such that  the limits
		\[
		\lim_{t\to+\infty}f^\vee(\gamma(t)) {\rm \ \ and\ \ } \lim_{t\to+\infty}f^\wedge(\gamma(t)) 
		\]
		exist and are equal to $c_\gamma$. In particular, 
		both limits exist for ${\rm AM}$-a.e. $\gamma\in\Gamma^{+\infty}$.
		%Moreover, the condition of doubling and $1$-Poincar\'e inequalities is also necessary for the existence of such limits.
	\end{theorem}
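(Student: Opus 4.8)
The model case $f\in\dot N^{1,1}(X)$ works because, $g_f\in L^1(X)$ being integrable, the family $\{\gamma:\int_\gamma g_f\,ds=+\infty\}$ is $1$-exceptional, and on the remaining curves $\int_\gamma g_f\,ds$ already bounds the total variation of $f\circ\gamma$ over all of $[0,+\infty)$; for a general $f\in\dot{\rm BV}(X)$ there is no single $L^1$ control of this kind (already $f=\mathbf 1_E$ with $E$ of finite perimeter shows this, since a curve crossing $\partial^*E$ sees unit variation but no $L^1$ function can detect the $\mu$-null set $\partial^*E$), so I would instead reduce to sets of finite perimeter via the coarea formula. Concretely, the plan is to show that, under the doubling and $1$-Poincar\'e hypotheses, for $1$-a.e.\ $\gamma\in\Gamma^{+\infty}$ parameterized by arc length on $[0,+\infty)$, both $t\mapsto f^\vee(\gamma(t))$ and $t\mapsto f^\wedge(\gamma(t))$ are real valued and of \emph{finite pointwise total variation} on $[0,+\infty)$. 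Granting this, the theorem follows at once: a real-valued function on $[0,+\infty)$ of finite pointwise variation is a difference of two bounded monotone functions and hence has a limit at $+\infty$; moreover $\mu$-a.e.\ point of $X$ is a Lebesgue point of $f$, so the set $S_f:=\{f^\wedge<f^\vee\}\cup\{f^\vee=+\infty\}\cup\{f^\wedge=-\infty\}$ is $\mu$-null, and therefore for each $m\in\mathbb N$ the function $m\mathbf 1_{S_f}$ has vanishing $L^1(X)$-norm and is admissible for $\{\gamma:\mathcal L^1(\gamma^{-1}(S_f))\ge 1/m\}$, so $\{\gamma:\mathcal L^1(\gamma^{-1}(S_f))>0\}$ is $1$-exceptional; thus for $1$-a.e.\ $\gamma$ one has $f^\vee(\gamma(t))=f^\wedge(\gamma(t))$ for $\mathcal L^1$-a.e.\ $t$, and two functions of finite pointwise variation on $[0,+\infty)$ that agree a.e.\ have the same limit at $+\infty$. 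This common value is the required $c_\gamma$, and the ${\rm AM}$-a.e.\ assertion then follows from the comparison of the ${\rm AM}$-modulus with the $1$-modulus recalled in Section~\ref{section2}.

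To establish the finiteness of the pointwise variation, write $E_t=\{f>t\}$; the coarea formula gives $\|Df\|(X)=\int_{\mathbb R}P(E_t,X)\,dt<+\infty$, so $P(E_t,X)<+\infty$ for $\mathcal L^1$-a.e.\ $t$, and for such $t$ the codimension-one Hausdorff measure $\mathcal H$ of the measure-theoretic boundary satisfies $\mathcal H(\partial^*E_t)\le C\,P(E_t,X)$ by the structure theory of sets of finite perimeter in PI spaces. By \eqref{eq:Mod and H}, the family $\Gamma_{\partial^*E_t}$ of nonconstant rectifiable curves meeting $\partial^*E_t$ satisfies $\mathrm{Mod}_1(\Gamma_{\partial^*E_t})\le C\,\mathcal H(\partial^*E_t)\le C\,P(E_t,X)$, so one may choose, measurably in $t$, a Borel function $\rho_t$ admissible for $\Gamma_{\partial^*E_t}$ with $\|\rho_t\|_{L^1(X)}\le 2C\,P(E_t,X)$. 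The crucial point is a multiplicity estimate: if a curve $\gamma$ meets $\partial^*E_t$ at $n$ distinct parameter values, then picking $n$ pairwise disjoint nondegenerate subcurves, each of which still meets $\partial^*E_t$, and applying admissibility to each gives $\int_\gamma\rho_t\,ds\ge n$; hence $\int_\gamma\rho_t\,ds\ge\#\bigl(\gamma^{-1}(\partial^*E_t)\bigr)\in[0,+\infty]$ for every $t$.

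Now set $\rho:=\int_{\mathbb R}\rho_t\,dt$, so that $\|\rho\|_{L^1(X)}\le\int_{\mathbb R}\|\rho_t\|_{L^1(X)}\,dt\le 2C\,\|Df\|(X)<+\infty$; in particular $\rho\in L^1(X)$, whence (using that $\rho/m$ is admissible for $\{\gamma:\int_\gamma\rho\,ds\ge m\}$ and letting $m\to\infty$) the family $\{\gamma:\int_\gamma\rho\,ds=+\infty\}$ is $1$-exceptional. By Fubini and the multiplicity estimate, for every rectifiable $\gamma$,
\[
\int_\gamma\rho\,ds=\int_{\mathbb R}\Bigl(\int_\gamma\rho_t\,ds\Bigr)dt\ \ge\ \int_{\mathbb R}\#\bigl(\gamma^{-1}(\partial^*E_t)\bigr)\,dt .
\]
On the other hand, for $1$-a.e.\ $\gamma$ and $\mathcal L^1$-a.e.\ $t$, the function $\mathbf 1_{E_t}\circ\gamma$ coincides a.e.\ with a pointwise-$\mathrm{BV}$ function on $[0,+\infty)$ whose pointwise variation is at most $\#(\gamma^{-1}(\partial^*E_t))$ (the fine behaviour of sets of finite perimeter along $1$-a.e.\ curve; cf.\ Proposition~\ref{prop2.2-1511} and Section~\ref{section2}). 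Combining this with the layer-cake representation of $f^\vee$ (and of $f^\wedge$) in terms of the level sets $E_t$ and the triangle inequality for pointwise variation yields
\[
\mathrm{Var}_{[0,+\infty)}\bigl(f^\vee\circ\gamma\bigr)\ \le\ \int_{\mathbb R}\#\bigl(\gamma^{-1}(\partial^*E_t)\bigr)\,dt\ \le\ \int_\gamma\rho\,ds
\]
for $1$-a.e.\ $\gamma$, and likewise for $f^\wedge\circ\gamma$; since the right-hand side is finite (in particular $f^\vee\circ\gamma$, $f^\wedge\circ\gamma$ are real valued) for $1$-a.e.\ $\gamma$, this is exactly the finiteness asserted at the outset.

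I expect the main obstacle to be making the coarea argument fully rigorous in the metric setting: the measurable-in-$t$ selection of the admissible functions $\rho_t$ together with the uniform bound $\|\rho_t\|_{L^1(X)}\lesssim P(E_t,X)$ supplied by \eqref{eq:Mod and H}, the identification for $1$-a.e.\ curve of $\mathrm{Var}_{[0,+\infty)}(f^\vee\circ\gamma)$ with the crossing integral $\int_{\mathbb R}\#(\gamma^{-1}(\partial^*E_t))\,dt$ via the one-dimensional coarea formula along $\gamma$ and the fine structure of the level sets, and the interchange of a curve integral with an integral in the level parameter. These are precisely the points at which the doubling property and the $1$-Poincar\'e inequality are indispensable, in agreement with the counterexamples discussed after Theorem~\ref{theorem1.2}.
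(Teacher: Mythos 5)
Your strategy is genuinely different from the paper's: you reduce the theorem to the claim that $f^\vee\circ\gamma$ and $f^\wedge\circ\gamma$ have finite pointwise variation on $[0,+\infty)$ for $1$-a.e.\ $\gamma$, and you try to prove that via the coarea formula and a Banach-indicatrix (crossing-count) bound for the level sets $E_t=\{f>t\}$. The paper instead approximates $f$ on $X_\varepsilon=\{f^\vee-f^\wedge<\varepsilon\}$ by a Sobolev function $h_\varepsilon\in\dot N^{1,1}(X_\varepsilon)$ with $|h_\varepsilon-f^\vee|,|h_\varepsilon-f^\wedge|\le 10\varepsilon$ (Lemma \ref{lem3.1-0711}), shows that the infinite curves meeting the jump set $S_{f,\varepsilon}$ in infinitely many dyadic annuli form a $1$-exceptional family (Lemma \ref{lem3.1-0211}), and concludes by letting $\varepsilon\to 0$ and invoking the known $\dot N^{1,1}$ case. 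Your preliminary reductions (finite pointwise variation implies existence of the limit; $f^\vee\circ\gamma=f^\wedge\circ\gamma$ a.e.\ on $[0,+\infty)$ for $1$-a.e.\ $\gamma$ because $\mu(S_f)=0$) are correct.

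The gap is in the multiplicity estimate, which is the load-bearing step. The bound \eqref{eq:Mod and H} and the covering construction behind it produce an admissible function $\rho_t=\sum_k\chi_{2B_k}/r_k$ only for the family of \emph{infinite} curves meeting $\partial^*E_t$: admissibility uses that such a curve, having met $B_k$, must escape $2B_k$ and therefore travels length at least $r_k$ inside $2B_k$. For the family of \emph{all} nonconstant rectifiable curves meeting a set of positive codimension-one measure the $1$-modulus is typically infinite --- already for a segment $H$ in the plane, the vertical segments of length $\delta$ ending on $H$ force any admissible $\rho$ to satisfy $\int_{[0,1]\times[0,\delta]}\rho\,d\mathcal L^2\ge 1$ for every $\delta>0$, which is incompatible with $\rho\in L^1$ --- so no Borel $\rho_t$ with $\|\rho_t\|_{L^1(X)}\lesssim P(E_t,X)$ can be admissible for arbitrarily short subcurves meeting $\partial^*E_t$. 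Your inequality $\int_\gamma\rho_t\,ds\ge\#\bigl(\gamma^{-1}(\partial^*E_t)\bigr)$ is obtained precisely by applying admissibility to $n$ pairwise disjoint, possibly very short, subcurves, so it does not follow. What a cover at scale $R$ actually yields is a count of crossings that are separated at scale comparable to $R$; letting $R\to 0$ gives a sequence of admissible functions and hence only a bound on a $\liminf$ of curve integrals, i.e.\ an ${\rm AM}$-type estimate rather than the single $L^1$ majorant your Fubini step requires, and the finiteness of the pointwise variation for $\mathrm{Mod}_1$-a.e.\ curve no longer follows without a further idea. (Secondary issues --- the measurable-in-$t$ selection of $\rho_t$, the union over uncountably many $t$ of the $t$-dependent exceptional families, and the identification of ${\rm Var}\bigl((\chi_{E_t})^\vee\circ\gamma\bigr)$ with the crossing count, for which Proposition \ref{prop2.2-1511} does not apply since $\chi_{E_t}\notin N^{1,1}$ --- are also left open, but the multiplicity estimate is where the argument breaks.)
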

	
	For the uniqueness part  in the case of $\dot{\rm BV}$-functions, we  obtain it by using the same arguments as in \cite{KN23} in the case of $\dot{N}^{1,1}$-functions, under the assumption of the annular chain condition as defined in Definition \ref{dyadic}, see the details in Section \ref{sec3}.
	In the following corollary, we also prove that the limit at infinity of ${\rm BV}$-functions along $1$-a.e. infinite curve
	is necessarily $0$.

	\begin{corollary}\label{thm1.1}
		Suppose that $(X,d,\mu)$ is a complete doubling unbounded metric measure space supporting a $1$-Poincar\'e inequality and satisfying an annular $\lambda$--chain condition, where $\lambda\ge 1$ is the scaling factor in the Poincar\'e inequality.
		Then for every $f\in \dot {\rm BV}(X)$, there exists $c_f\in\mathbb R$ so that 
		\[
		\lim_{t\to+\infty} f^\vee (\gamma(t)) {\rm \ \ and\ \ } \lim_{t\to+\infty}f^\wedge(\gamma(t))
		\]
		exist and are equal to $c_f$ for $1$-a.e. infinite curve $\gamma\in\Gamma^{+\infty}$.
		
		Moreover, if we additionally assume that $f\in {\rm BV}(X)$ then $c_f= 0$.
		
		In particular, the claim holds  true for ${\rm AM}$-a.e. infinite curve.
	\end{corollary}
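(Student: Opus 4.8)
Since Theorem \ref{theorem1.5} already provides, for $1$-a.e. $\gamma\in\Gamma^{+\infty}$, a finite value $c_\gamma$ with $\lim_{t\to+\infty}f^\vee(\gamma(t))=\lim_{t\to+\infty}f^\wedge(\gamma(t))=c_\gamma$, the plan is to show that $c_\gamma$ is the same for $1$-a.e. $\gamma$ -- the ``uniqueness'' obtained as in \cite{KN23} -- and that the common value vanishes when $f\in{\rm BV}(X)$. Fix a basepoint $x_0\in X$ and set $u(r):=\dashint_{B(x_0,r)}f\,d\mu$. I would proceed in four steps: (i) $c_f:=\lim_{r\to+\infty}u(r)$ exists in $\mathbb R$; (ii) $c_\gamma=c_f$ for $1$-a.e. infinite curve $\gamma$; (iii) $c_f=0$ when $f\in{\rm BV}(X)$; (iv) the ${\rm AM}$-statement.

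For step (i), the annular $\lambda$--chain condition of Definition \ref{dyadic} enters as in \cite{KN23}. Chaining the $1$-Poincar\'e inequality along chains of balls that join points of $B(x_0,2^{k})$ to points of $B(x_0,2^{k+1})$ while staying inside a fixed dilate of the annulus $B(x_0,2^{k+1})\setminus B(x_0,2^{k})$, one obtains an estimate of the form
\[
|u(2^{k+1})-u(2^{k})|\le C\,\frac{2^{k}}{\mu(B(x_0,2^{k}))}\,\|Df\|\bigl(B(x_0,\Lambda 2^{k})\setminus B(x_0,\Lambda^{-1}2^{k})\bigr),
\]
with $C,\Lambda$ depending only on the data. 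The point of invoking the chain condition, rather than the Poincar\'e inequality on the ball $B(x_0,2^{k+1})$ directly, is that the right-hand side then involves only an \emph{annular} piece of the total variation; hence, as $k$ varies, these regions have bounded overlap and their $\|Df\|$-masses sum to at most $C\|Df\|(X)<+\infty$. Moreover, a complete doubling $1$-Poincar\'e space is connected and its balls satisfy the linear lower mass bound $\mu(B(x_0,r))\ge c\,r\,\mu(B(x_0,1))$ for $r\ge 1$, so the prefactors $2^{k}/\mu(B(x_0,2^{k}))$ are bounded uniformly in $k$. Summing the displayed estimate over $k$ then shows $(u(2^{k}))_k$ is Cauchy, and comparing $u(r)$ with $u(2^{k})$ for $2^{k}\le r<2^{k+1}$ in the same way gives the existence of $c_f=\lim_{r\to+\infty}u(r)$.

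For step (ii), I would follow \cite{KN23}. For $\mu$-a.e. $x\in X$, $x$ is a Lebesgue point of $f$, so $f^\wedge(x)=f^\vee(x)=:\tilde f(x)=\lim_{\rho\to0^+}\dashint_{B(x,\rho)}f\,d\mu$; since any $\mu$-null set is met by $1$-a.e. curve on an $\mathcal L^1$-null set of times, for $1$-a.e. $\gamma$ almost every $t$ has $\gamma(t)$ such a point, and then $\tilde f(\gamma(t))\to c_\gamma$ as $t\to+\infty$ by Theorem \ref{theorem1.5}. On the other hand every infinite curve is unbounded, so one can pick good times $t_j\to+\infty$ with $R_j:=d(x_0,\gamma(t_j))\to+\infty$; for these the chaining argument of \cite{KN23} -- nested balls joining $\gamma(t_j)$ to $B(x_0,R_j)$ through the far region $X\setminus B(x_0,cR_j)$, to which the $1$-Poincar\'e inequality is applied with $d\|Df\|$ in place of $g\,d\mu$ -- yields $|\tilde f(\gamma(t_j))-u(R_j)|\le\varepsilon_j$ with $\varepsilon_j\to0$, the smallness coming from $\|Df\|(X\setminus B(x_0,cR_j))\to0$ (which holds because $\|Df\|(X)<+\infty$). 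Since $u(R_j)\to c_f$ by step (i), letting $j\to+\infty$ forces $c_\gamma=c_f$. This is the ${\rm BV}$ analogue of the Sobolev argument of \cite{KN23}, with curvewise absolute continuity of $\dot N^{1,1}$-functions replaced by the Lebesgue-point structure of ${\rm BV}$-functions.

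Finally, for step (iii): if $f\in{\rm BV}(X)$ then $f\in L^1(X)$, so $|u(r)|\le\|f\|_{L^1(X)}/\mu(B(x_0,r))$, and the denominator tends to $+\infty$ by the linear lower mass bound, whence $c_f=0$. Step (iv) is then immediate, exactly as in Theorem \ref{theorem1.5}: a curve family of $1$-modulus zero also has ${\rm AM}$-modulus zero, which in particular proves the last assertion. I expect the main obstacle to be step (ii) -- transplanting the $\dot N^{1,1}$-argument of \cite{KN23} to the ${\rm BV}$-setting -- which requires replacing curvewise absolute continuity by the approximate-limit structure of ${\rm BV}$ (in particular checking that $1$-a.e. curve meets the bad set $\{f^\vee\ne f^\wedge\}$ in time-measure zero) and controlling the telescoping comparison out to infinity, both resting on the finiteness of $\|Df\|(X)$, the annular $\lambda$--chain condition, and the linear lower mass bound of $1$-Poincar\'e spaces.
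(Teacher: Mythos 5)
Your overall architecture matches the paper's: Theorem \ref{theorem1.5} supplies the curvewise limits, and the annular chain condition together with the BV form \eqref{eq2.5-1611} of the Poincar\'e inequality is fed into the chaining argument of \cite{KN23} (the paper applies it to the Lebesgue representative $f^*$, which lies in $\dot{\rm BV}(X)$ since $f^*=f$ $\mu$-a.e.) to pin down a single constant; the ${\rm AM}$-statement then follows from ${\rm AM}\le{\rm Mod}_1$. There is, however, a genuine error in your step (i): a complete doubling space supporting a $1$-Poincar\'e inequality does \emph{not} in general satisfy the linear lower mass bound $\mu(B(x_0,r))\ge c\,r\,\mu(B(x_0,1))$. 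What connectedness gives (see the paper's own use of \cite[Corollary 3.8]{BB15} in the proof of Lemma \ref{lem4.1-1809}) is only $\mu(B(x_0,R))/\mu(B(x_0,r))\gtrsim (R/r)^{s}$ for \emph{some} $s>0$, possibly $s<1$. For instance, $[0,+\infty)$ with the $\mathcal A_1$-weight $(1+x)^{-1/2}$ is complete, doubling, unbounded, supports a $1$-Poincar\'e inequality and the annular chain condition, yet $\mu(B(0,r))\simeq\sqrt r$; there a lacunary sum such as $f=\sum_{k\ \text{even}}\chi_{[2^{k},2^{k+1}]}$ lies in $\dot{\rm BV}(X)$ while $u(r)$ oscillates between two distinct values, so your telescoping sum diverges and step (i) fails as stated. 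The repair is the reduction underlying the cited argument of \cite{KN23}: if ${\rm Mod}_1(\Gamma^{+\infty})=0$ the corollary is vacuous, and otherwise $\mathcal R_1=\sup_j 2^j/\mu\bigl(B(O,2^{j+1})\setminus B(O,2^j)\bigr)<+\infty$, which is precisely the uniform bound on your prefactors $2^{k}/\mu(B(x_0,2^{k}))$ needed for the telescoping in step (i) and the comparison in step (ii). The same caveat touches step (iii), though there only $\mu(B(x_0,r))\to\mu(X)=+\infty$ is needed, and that does follow from the power-type bound.

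Aside from this, your treatment of $c_f=0$ takes a different route from the paper's: you identify $c_f$ with $\lim_{r\to+\infty}u(r)$ and let the ball averages vanish, whereas the paper argues directly on curves --- for the family $\Gamma$ of curves along which eventually $|f^*|\ge|c_f|/2$, the function $M^{-1}|f^*|$ is admissible, so ${\rm Mod}_1(\Gamma)\le M^{-1}\|f^*\|_{L^1(X)}\to 0$ as $M\to+\infty$, a contradiction. The paper's version has the advantage of not requiring the identification of $c_f$ as a limit of ball averages; yours is fine once steps (i)--(ii) are secured by the $\mathcal R_1<+\infty$ reduction above.
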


	%It is remarkable that
	The assumption of the annular chain condition is  necessary in Corollary \ref{thm1.1}. For example, on a space with at least two ends one easily constructs a Lipschitz function in $\dot {\rm BV}(X)$ (or even $\dot {N}^{1,1}(X)$) so that the tail value  of two of the ends is a different constant.
	%to show that the uniqueness of such function fails.
	A tree \cite{NW20,BBGS17} has more than two ends and is a doubling  metric measure space supporting a $1$-Poincar\'e inequality. Moreover, the assumption of the doubling property of the measure and a Poincar\'e inequality is also necessary for the uniqueness properties by Remark \ref{rem3.5-1612}.
	
	%We have a comment about the existence of limits at infinity along infinite curves for others representatives of bounded variation functions   as the following remark.
	
	Apart from the representatives $f^\wedge$ and $f^\vee$, one can also consider other representatives of BV functions, and their
	limits at infinity.
	
	\begin{remark}\label{rem1.5-12nd4}
		Let $h$ be a representative of $f\in\dot{\rm BV}(X)$ so that $h(x)$ is ``between" $f^\vee(x)$ and $f^\wedge(x)$ in the sense that
		\begin{equation}
			\label{1.4-8thMar}
			af^\vee(x)+bf^\wedge(x)\leq h(x)\leq cf^\vee(x)+df^\wedge(x)
		\end{equation}
		holds true for $\mathcal H$-a.e. $x$ and for some finite constants $a,b,c,d,$ with $a+b=c+d=1$. Here $\mathcal H$ is the codimension $1$ Hausdorff measure defined in \eqref{2.7-8thMarch}. For instance, we have from \eqref{2.26-25Feb}  that the Lebesgue representative $f^*$ and the average approximation $\bar f:= \frac{f^\vee+f^\wedge}{2}$   satisfy  the property \eqref{1.4-8thMar}.  Here $f^*$ means the Lebesgue representative of $f$ defined in \eqref{eq-lebes}.

		We obtain from
		%\eqref{2.2-0311} and \eqref{eq2.3-0311}
		\eqref{eq:Mod and H}
		that for $1$-a.e. infinite curve $\gamma$, if $\lim_{t\to+\infty}f^\vee(\gamma(t))=\lim_{t\to+\infty}f^\wedge(\gamma(t))=c_\gamma$  then the limit at infinity along $\gamma$ of the representative $h$ satisfying \eqref{1.4-8thMar}  exists and is equal to $c_\gamma$. In particular, Theorem \ref{theorem1.5}  gives that the doubling and $1$-Poincar\'e inequalities condition is sufficient for the existence of limits at infinity along $1$-a.e. infinite curve for the Lebesgue representative $f^*$ and the average approximation $\bar f$ of $f\in \dot{\rm BV}(X)$.
	\end{remark}
	The condition of a $1$-Poincar\'e inequality is needed for the existence of limits at infinity of the Lebesgue representative $f^*$ of a $\dot{\rm BV}$-function $f$ for $1$-a.e. infinite curve, by Example \ref{ex3.4-1612} (the doubling condition fails) and by Examples \ref{ex3.6-30th03}-\ref{ex3.7-30thMar} (the doubling condition holds).
	However, we do not know whether the doubling condition is necessary, and so we pose the following open problem.
	%If our space supports a $1$-Poincar\'e inequality, we do not know whether the doubling condition is necessary for the existence of limits at infinity of Lebesgue representative of $\dot{\rm BV}$-functions for $1$-a.e. infinite curve as open problem below.
	\begin{open}
		Suppose that $(X,d,\mu)$ supports a $1$-Poincar\'e inequality. % but the condition of doubling fails.
		Let $f\in \dot{\rm BV}(X)$.
		\begin{center}
			Does the existence of $\lim_{t\to+\infty}f^*(\gamma(t))$ for $1$-a.e. infinite curve $\gamma$  hold?
		\end{center}
	\end{open}

	\subsection{Strictly positive $1$-modulus of family of all infinite curves}\ 
	
	The conclusions of Theorem \ref{theorem1.5} and Corollary \ref{thm1.1} are nontrivial only when the $1$-modulus of $\Gamma^{+\infty}$ is strictly positive. 
	In \cite{KN23},  the  $1$-modulus of the family $\Gamma^{+\infty}$ being strictly positive is shown to be equivalent to the  quantity 
	\[
	\mathcal R_1:=\sup_{j\in\mathbb N}\frac{2^j}{\mu(B(O,2^{j+1})\setminus B(O,2^j))}
	\]
	being finite, and also equivalent to the claim that for every $f\in \dot{N}^{1,1}(X)$, there exists an infinite curve $\gamma$ so that  $\lim_{t\to+\infty}f(\gamma(t))$ exists under the assumption of $X$ being a complete, doubling unbounded metric measure space supporting a $1$-Poincar\'e inequality and the annular chain condition. Here $B(O,r)$ is the ball with radius $r$ and center at a given point $O\in X$.  For instance, on an Ahlfors  $Q$-regular complete unbounded  metric measure space supporting a $1$-Poincar\'e inequality, we  always have ${\rm Mod}_1(\Gamma^{+\infty})>0$ where $1<Q<+\infty$ and   Ahlfors $Q$-regular measures are defined in \eqref{eq2.4-1611}. We denote $N^{1,1}(X)=\dot N^{1,1}(X)\cap L^1(X)$.
	Next, we give more characterizations of ${\rm Mod}_1(\Gamma^{+\infty})>0$ in terms of functions of bounded variation. 
	
	\begin{theorem}\label{thm1.2}
		Under the assumption of Corollary \ref{thm1.1}, the following statements are equivalent:
		\begin{enumerate}
			%  \item ${\rm AM}(\Gamma^{+\infty})>0$.
			\item ${\rm Mod}_1(\Gamma^{+\infty})>0$.
			%  \item The quantity $\mathcal R_1$ is finite. 
			\item For every $f\in \dot{\rm BV}(X)$, there exists an infinite curve $\gamma\in\Gamma^{+\infty}$ so that both limits
			\[
			\lim_{t\to+\infty}f^\vee(\gamma(t)) {\rm \ \ and \ \ }\lim_{t\to+\infty}f^\wedge(\gamma(t))
			\]
			exist and are equal.
			\item For every $f\in \dot{\rm BV}(X)\setminus \dot{ N}^{1,1}(X)$, there exists an infinite curve $\gamma\in\Gamma^{+\infty}$ so that both limits   \[
			\lim_{t\to+\infty}f^\vee(\gamma(t)) {\rm \ \ and \ \ }\lim_{t\to+\infty}f^\wedge(\gamma(t))
			\]
			exist and are equal.
			\item For every $f\in { N}^{1,1}(X)$, there exists an infinite curve $\gamma\in\Gamma^{+\infty}$ so that  $$\lim_{t\to+\infty}f(\gamma(t))=0.$$
			\item    For every $f\in {\rm BV}(X)\setminus \dot{ N}^{1,1}(X)$, there exists an infinite curve $\gamma\in\Gamma^{+\infty}$ so that both limits   \[
			\lim_{t\to+\infty}f^\vee(\gamma(t)) {\rm \ \ and \ \ }\lim_{t\to+\infty}f^\wedge(\gamma(t))
			\]
			exist and are equal to $0$.
			\item For every $f\in {\rm BV}(X)$, there exists an infinite curve $\gamma\in\Gamma^{+\infty}$ so that both limits  \[
			\lim_{t\to+\infty}f^\vee(\gamma(t)) {\rm \ \ and \ \ }\lim_{t\to+\infty}f^\wedge(\gamma(t))
			\]
			exist and are equal to $0$.
			\item For every $f\in \dot N^{1,1}(X)\setminus N^{1,1}(X)$, there exists an infinite curve $\gamma\in \Gamma^{+\infty}$ so that 
			\[
			\lim_{t\to+\infty}f(\gamma(t))
			\]
			exist.
		\end{enumerate}
		
	\end{theorem}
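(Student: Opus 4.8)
The plan is to prove the equivalence by showing that (1) implies each of (2)--(7), and conversely that the failure of (1) forces the failure of each of (2)--(7). Throughout I will use the fact recalled in the introduction --- from \cite{KN23} --- that ${\rm Mod}_1(\Gamma^{+\infty})>0$ is equivalent to $\mathcal R_1<+\infty$; in contrapositive form, $\neg(1)$ says precisely that $\liminf_{j\to\infty}\mu(A_j)/2^j=0$, where $A_j:=B(O,2^{j+1})\setminus B(O,2^j)$.

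For the implications out of (1), the key remark is that when ${\rm Mod}_1(\Gamma^{+\infty})>0$, any family of infinite curves of $1$-modulus zero is a \emph{proper} subfamily of $\Gamma^{+\infty}$, so a property holding for $1$-a.e.\ infinite curve is witnessed by at least one $\gamma\in\Gamma^{+\infty}$. Feeding Corollary \ref{thm1.1} into this gives (2) for every $f\in\dot{\rm BV}(X)$, and hence (3) by restriction to the smaller class; its ``moreover'' part (where $c_f=0$) gives (6), and hence (5) by the same restriction. For (4) I would use that $N^{1,1}(X)\subseteq{\rm BV}(X)$ (since $\|Df\|(X)\le\int_X g_f\,d\mu<+\infty$) and that $f^\vee=f^\wedge=f$ along $1$-a.e.\ curve, by Proposition \ref{prop2.2-1511} and \eqref{eq:Mod and H}, and then apply Corollary \ref{thm1.1}. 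Finally (7) follows from \eqref{eq1.1-1511} applied to $f\in\dot N^{1,1}(X)\setminus N^{1,1}(X)\subseteq\dot N^{1,1}(X)$, again extracting one curve from ${\rm Mod}_1(\Gamma^{+\infty})>0$.

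The substantive direction is to build counterexamples when $\neg(1)$ holds. After passing to a subsequence I would fix $j_1<j_2<\cdots$ with $\sum_k\mu(A_{j_k})/2^{j_k}<+\infty$, and then, using the coarea formula for the $1$-Lipschitz function $x\mapsto d(x,O)$, pick radii $r_k<r_k'$ inside $(2^{j_k},2^{j_k+1})$ at which $P(B(O,\cdot),X)$ is small and $t\mapsto\mu(B(O,t))$ varies slowly, with $r_k'-r_k$ so small that the four series $\sum_k P(B(O,r_k),X)$, $\sum_k P(B(O,r_k'),X)$, $\sum_k\mu(\{r_k\le d(\cdot,O)<r_k'\})$ and $\sum_k\mu(\{r_k\le d(\cdot,O)<r_k'\})/(r_k'-r_k)$ all converge (each is possible because, averaged over $t\in[2^{j_k},2^{j_k+1}]$, every one of these quantities is $\lesssim\mu(A_{j_k})/2^{j_k}$); also $P(B(O,r),X)>0$ for every $r>0$ since $B(O,r)$ is neither $\mu$-null nor $\mu$-conull. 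Put $E:=\bigcup_k\{x:r_k\le d(x,O)<r_k'\}$. Then $f:=\mathbf 1_E$ should satisfy $f\in{\rm BV}(X)$ (indeed $\mu(E)<+\infty$ and $\|Df\|(X)\le\sum_k(P(B(O,r_k),X)+P(B(O,r_k'),X))<+\infty$), whereas $f\notin\dot N^{1,1}(X)$ (since $\mathcal H(S_f)\ge P(B(O,r_1),X)>0$, but a Sobolev function has an $\mathcal H$-negligible jump set). The oscillation mechanism is elementary: an infinite curve $\gamma$ has $d(\gamma(0),O)<+\infty$ and leaves every ball, so $t\mapsto d(\gamma(t),O)$ takes every large value; hence, for all large $k$, $\gamma$ meets the thin shell $\{r_k\le d(\cdot,O)<r_k'\}$ at an interior (hence density) point of $E$ and the gap $\{r_k'\le d(\cdot,O)<r_{k+1}\}$ at a density point of $X\setminus E$, so $f^\vee(\gamma(t))=f^\wedge(\gamma(t))$ takes both values $0$ and $1$ for arbitrarily large $t$ and neither limit exists. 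As $f\in{\rm BV}(X)\setminus\dot N^{1,1}(X)\subseteq\dot{\rm BV}(X)\setminus\dot N^{1,1}(X)$, this single function simultaneously refutes (2), (3), (5) and (6).

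To refute (4) and (7) the witness must be a genuine Sobolev function, so I would replace $\mathbf 1_E$ by the radial tent $u(x):=\psi(d(x,O))$, with $\psi$ of height $1$ on each $[r_k,r_k']$ (zero at the endpoints) and $0$ elsewhere; since $|\psi'(d(\cdot,O))|$ is an upper gradient of $u$, one gets $\int_X g_u\,d\mu\lesssim\sum_k\mu(\{r_k\le d(\cdot,O)<r_k'\})/(r_k'-r_k)<+\infty$ and $\int_X u\,d\mu\le\mu(E)<+\infty$, so $u\in N^{1,1}(X)$; the curve argument above gives $\limsup_{t\to\infty}u(\gamma(t))=1$ for every infinite curve, so (4) fails, and then $u+1\in\dot N^{1,1}(X)\setminus N^{1,1}(X)$ (its integral is infinite because $\mu(X)=+\infty$) oscillates between $1$ and $2$ along every infinite curve, so (7) fails. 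I expect the main obstacle to be exactly the simultaneous selection of the radii $r_k,r_k'$ together with the verifications that $f=\mathbf 1_E$ has finite total variation yet fails to lie in $\dot N^{1,1}(X)$ and that $u$ has finite Dirichlet energy --- that is, controlling the perimeters, the shell masses and the energy all at once via the coarea bound $\int_{2^{j_k}}^{2^{j_k+1}}P(B(O,t),X)\,dt\le\mu(A_{j_k})$; the oscillation itself --- every infinite curve must cross every shell around $O$ --- is the conceptual heart and is straightforward.
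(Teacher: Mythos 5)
Your overall architecture coincides with the paper's: the implications out of (1) are harvested from Corollary \ref{thm1.1}, from the $N^{1,1}$-result of \cite{KN23} together with the observation that $f\in L^1$ forces the limit value to be $0$, and from the fact that a $1$-exceptional family cannot exhaust $\Gamma^{+\infty}$ when ${\rm Mod}_1(\Gamma^{+\infty})>0$; the converse is reduced, via the equivalence ${\rm Mod}_1(\Gamma^{+\infty})>0\Longleftrightarrow\mathcal R_1<+\infty$, to producing explicit counterexamples when $\mathcal R_1=+\infty$. Where you genuinely diverge is in the construction of those counterexamples (the content of the paper's Lemma \ref{lem4.1-1809}). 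The paper first makes a biLipschitz change to a geodesic metric so that spheres are $\mu$-null, then runs a dyadic halving of unit annuli to produce nested annuli whose measure-to-width ratios are controlled at every scale; the BV counterexample is obtained as an $L^1$-limit of the resulting Lipschitz bumps, with the jump part sitting on two limiting spheres per block. You instead select the radii $r_k<r_k'$ by the coarea inequality for $d(\cdot,O)$ and Lebesgue differentiation of $t\mapsto\mu(B(O,t))$, and take the characteristic function of the union of thin shells directly, bounding its variation by the perimeters of the chosen balls. Your route is shorter and avoids the geodesic reduction, at the price of invoking the metric-space coarea formula (available in \cite{Mi03}) and of a somewhat delicate simultaneous selection; the paper's halving argument is more hands-on but self-contained. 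Both yield a single function refuting (2), (3), (5), (6) at once, and your tent function $\psi(d(\cdot,O))$ and its shift by $1$ play exactly the role of the paper's $\sum_j h_{j,k}$ and $\sum_j(1-u_{j,k})$ for (4) and (7).

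Two places in your sketch need to be firmed up before this is a complete proof. First, the selection of $r_k$ and $r_k'$: you need, simultaneously, $P(B(O,r_k),X)$ and $P(B(O,r_k'),X)$ of order $\mu(A_{j_k})/2^{j_k}$, the shell mass summable, and the difference quotient $\mu(\{r_k\le d(\cdot,O)<r_k'\})/(r_k'-r_k)$ of order $\mu(A_{j_k})/2^{j_k}$. The coarea bound only controls these quantities on average, so you must intersect the (full-measure or large-measure) good sets for the perimeter and for the derivative of $V(t)=\mu(B(O,t))$, pick $r_k$ a density point of that intersection, and then take $r_k'$ in the same set close enough to $r_k$ that the difference quotient is comparable to $V'(r_k)$; this works but is exactly the step that replaces the paper's inductive halving and should be written out. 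Second, the assertion $\mathcal H(S_f)\ge P(B(O,r_1),X)$ is not literally an inequality between those two quantities: you need the locality of perimeter to identify $\partial^*E$ near the sphere $\{d(\cdot,O)=r_1\}$ with $\partial^*B(O,r_1)$, and then the representation \eqref{eq2.5-0411} to get $\mathcal H(\partial^*E)\ge P(B(O,r_1),X)/C_d>0$, after which Proposition \ref{prop2.2-1511} rules out $f\in\dot N^{1,1}(X)$. With these details supplied, your argument is correct.
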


	A reader familiar with classification theory should recognize that the quantity $\mathcal R_1$ is finite in \cite{G99,H99,HK01, K22} and that ${\rm Mod}_1(\Gamma^{+\infty})$ is strictly positive in \cite{Sh21} as a condition towards $1$-hyperbolicity. In \cite{KN23, EKN22}, the finiteness of $\mathcal R_1$ gives that for every subset $E$ of the unit sphere $\mathbb S$ on Muckenhoupt $\mathcal A_1$-weighted spaces  or on polarizable Carnot groups,  $\sigma(E)\lesssim {\rm Mod}_1(\Gamma_E)$ where $\sigma$ is the probability surface measure on $\mathbb S$ and $\Gamma_E$ is the collection of all radial curves starting from a point in $E$. It follows from Corollary \ref{thm1.1}  and Lemma \ref{lem4.1-1809} that the finiteness of $\mathcal R_1$ characterizes the existence and uniqueness of  limits at infinity along almost every radial curve for bounded variation functions on both Muckenhoupt $\mathcal A_1$-weighted spaces and polarizable Carnot groups, i.e. $\mathcal R_1<+\infty$ is equivalent to the fact that for every $f\in\dot{\rm BV}(X)$, there is $c_f\in\mathbb R$ so that $\lim_{r\to+\infty}f(r\cdot\xi)=c_f$ for $\sigma$-a.e. $\xi\in\mathbb S$.

	%The outline of the paper is as follows: 

	\section{Preliminaries} \label{section2}
	
	Throughout this paper, we use the following conventions.
	We work in a metric measure space $(X,d,\mu)$, where $\mu$ is a Borel regular outer measure.
	We denote by $O$ the given point in our space $X$.
	The notation $A\lesssim B\ (A\gtrsim B)$ means that there is a constant $C>0$ depending only on the data such that $A\leq C \cdot B\ (A\geq C\cdot B)$, and $A\approx B$ (or $A\simeq B$) means that $A\lesssim B$ and $A\gtrsim B$. 
	For  each locally integrable function $f$ and for every measurable subset $A\subset X$ of strictly positive
	and finite measure, we let $f_A:=\dashint_Afd\mu=\frac{1}{\mu(A)}\int_Afd\mu$.  We denote the ball centered at $x\in X$ of radius $r>0$ by $B(x,r)$, and for $\tau>0$, we set $\tau B(x,r):=B(x,\tau r)$. We always assume that $\mu(X)>0$.
	\subsection{Infinite curves}
	Let $(X,d)$ be a metric space. A  \textit{curve} is a nonconstant continuous mapping from an interval $I\subseteq\mathbb R$ into $X$. The \textit{length} of a curve $\gamma$ is denoted by $\ell(\gamma)$. A curve $\gamma$ is said to be a  \textit{rectifiable curve} if its length is finite. Similarly, $\gamma$ is a   \textit{locally rectifiable curve} if its restriction to each closed subinterval of $I$ is rectifiable. Each rectifiable curve $\gamma$ will be parameterized by arc length and hence the  \textit{line integral} over $\gamma$ of a Borel function $f$ on $X$ is 
	\[\int_\gamma fds =\int_0^{\ell(\gamma)}f(\gamma(t))dt.
	\]
	If $\gamma$ is locally rectifiable, then we set 
	\[\int_{\gamma}fds=\sup\int_{\gamma'}fds
	\]where the supremum is taken over all rectifiable subcurves $\gamma'$ of $\gamma$. Let $\gamma:[0,{+\infty})\to X$ be a locally rectifiable curve, parameterized by arc length. Then
	\[\int_{\gamma}fds=\int_0^{+\infty} f(\gamma(t))dt.
	\]
	A locally rectifiable curve $\gamma$ is an  \textit{infinite curve} if $\gamma\setminus B\neq\emptyset$ for all balls $B$. Then $\int_{\gamma}ds={+\infty}.$   
	We denote by $\Gamma^{+\infty}$ the collection of all infinite curves.

	\subsection{Modulus, capacity and Newtonian spaces}\label{sec2.2}
	Let $\Gamma$ be a family of curves in  $(X,d,\mu)$. Let $1\le p<+\infty$. The  \textit{$p$-modulus} of $\Gamma$, denoted  $\text{\rm Mod}_p(\Gamma)$, is defined by 
	\[\text{\rm Mod}_p(\Gamma):=\inf \int_{X}\rho^pd\mu
	\]where the infimum is taken over all Borel functions $\rho:X\to[0,{+\infty}]$ satisfying 
	$\int_{\gamma}\rho ds\geq 1
	$ for every locally rectifiable curve $\gamma\in\Gamma$. A family of curves is called \textit{$p$-exceptional} if it has $p$-modulus zero. We say that a property  holds for \textit{$p$-a.e. curve} (or $p$-almost every curve) if the collection of curves for which the property fails  is $p$-exceptional.

	Let $u$ be a locally integrable function on $X$. A Borel function $\rho:X\to[0,{+\infty}]$ is said to be an \textit{upper gradient} of $u$ if 
	\begin{equation}
		\label{def-upper-gradient}|u(x)-u(y)|\leq \int_{\gamma}\rho\, ds
	\end{equation}
	for every rectifiable curve $\gamma$ connecting $x$ and $y$. Then we  have that \eqref{def-upper-gradient} holds for all compact subcurves of $\gamma\in\Gamma^{+\infty}$. For $1\le p<+\infty$, we say that $\rho$ is a  \textit{$p$-weak upper gradient} of $u$ if \eqref{def-upper-gradient} holds for $p$-a.e. rectifiable curve. If $u$ has a $p$-weak upper gradient that is $p$-integrable, we denote by $g_u$ the  \textit{minimal $p$-weak} upper gradient of $u$, which is unique up to sets of measure zero and which is minimal in the sense that $g_u\leq\rho $ a.e.\ for every $p$-integrable $p$-weak upper gradient $\rho$ of $u$. In \cite{H03}, the existence and uniqueness of such a minimal $p$-weak upper gradient are given.  The notion of upper gradients is due to Heinonen and Koskela \cite{HK98}, and we refer  interested  readers to \cite{HKST15,BB15,H03,N00,HK98} for a more detailed discussion on upper gradients.
	
	Let $\dot{N}^{1,p}(X)$ be the collection of all locally integrable functions for which an upper gradient is $p$-integrable on $X$ where $1\leq p<+\infty$. The space $\dot N^{1,p}(X)$ is called the homogeneous $p$-Sobolev space on $(X,d,\mu)$.
	
	The \textit{$p$-capacity} of $K\subset X$ %, denoted by $\text{\rm Cap}_p(K)$,
	is defined by 
	\[\text{\rm Cap}_p(K):=\int_{X}|u|^pd\mu+\inf \int_{X}g_u^pd\mu,
	\] 
	where the infimum is taken over all $p$-integrable functions $u:X\to\mathbb R$ with $p$-integrable minimal {$p$-weak} upper gradient $g_u$ such that $u|_K\equiv 1$. % and the support of $u$ is contained in some ball.
	{By \cite[Lemma 6.2.2]{HKST15}, one can replace the minimal $p$-weak upper gradient $g_u$ in the definition
		with upper gradients of the function $u$.}

	%Let $A$ be a subset of $X$. We let $\Gamma_A$ be the family of all infinite curves
	%$\gamma\in\Gamma^{+\infty}$ for which $\gamma\cap A\neq \emptyset$. Then we obtain from the
	%definition of modulus and capacities that for $1\leq p<+\infty$,
	%\begin{equation}
	%\label{2.2-0311}
	%{\rm Mod}_p(\Gamma_A)\leq {\rm Cap}_p(A).
	%\end{equation}
	
	Let $\Gamma$ be a family of locally rectifiable curves on $X$. We define the ${\rm AM}$-modulus of $\Gamma$ on $X$ by setting 
	\begin{equation}\label{eq2.13-1711}
		{\rm AM}(\Gamma):=\inf \liminf_{i\to+\infty} \int_X g_{i}d\mu
	\end{equation}
	where the infimum is taken over all sequences of Borel functions $g_i:X\to[0,+\infty]$  so that 
	\[
	\liminf_{i\to+\infty}\int_{\gamma}g_ids \geq 1
	\]
	for all $\gamma\in\Gamma$. We have that ${\rm AM}(\Gamma)\leq {\rm Mod}_1(\Gamma)$ for all families $\Gamma$ of locally rectifiable curves. We say that a property holds on ${\rm AM}$-a.e. curve if it holds for every curve except for a family $\Gamma$ with ${\rm AM}(\Gamma)=0.$ 
	We refer the interested reader to \cite{HKMO21,Mar16} for a discussion on ${\rm AM}$-modulus.

	\begin{lemma}\label{lem2.2-2508}
		Let $\Gamma$ be a family of locally rectifiable curves. Then the following are equivalent:
		\begin{enumerate}
			\item ${\rm AM}(\Gamma)=0$.
			\item For every $\varepsilon>0$, 
			there is a sequence of functions, denoted $\{ \rho_i\}_{i\in\mathbb N}$, such that 
			\[
			\sup_{i\in\mathbb N}   \int_X\rho_id\mu <\varepsilon
			\]
			and that
			\[
			\liminf_{i\to+\infty}\int_{\gamma}\rho_ids=+\infty {\rm \ \ for\ all\ }\gamma\in\Gamma.
			\]
		\end{enumerate}
	\end{lemma}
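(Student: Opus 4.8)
The plan is to prove the two implications separately; the content is entirely in the construction for $(1)\Rightarrow(2)$.

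The direction $(2)\Rightarrow(1)$ is immediate. Given $\varepsilon>0$, any sequence $\{\rho_i\}$ as in (2) satisfies $\liminf_{i\to\infty}\int_\gamma\rho_i\,ds=+\infty\geq 1$ for every $\gamma\in\Gamma$, hence is admissible in the definition \eqref{eq2.13-1711} of the ${\rm AM}$-modulus, so ${\rm AM}(\Gamma)\leq\liminf_{i\to\infty}\int_X\rho_i\,d\mu\leq\sup_i\int_X\rho_i\,d\mu<\varepsilon$; letting $\varepsilon\to 0$ gives ${\rm AM}(\Gamma)=0$.

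For $(1)\Rightarrow(2)$, fix $\varepsilon>0$. The idea is to amplify countably many admissible sequences with geometrically growing weights. Since ${\rm AM}(\Gamma)=0$, for each $k\in\mathbb N$ I choose an admissible sequence $\{g_i^{(k)}\}_i$ of nonnegative Borel functions --- i.e. $\liminf_i\int_\gamma g_i^{(k)}\,ds\geq 1$ for every $\gamma\in\Gamma$ --- whose masses satisfy $\liminf_i\int_X g_i^{(k)}\,d\mu<\varepsilon\,2^{-2k-1}$. Passing to a subsequence, which does not destroy admissibility because a $\liminf$ along a subsequence is at least the $\liminf$ along the full sequence, I may assume $\int_X g_i^{(k)}\,d\mu<\varepsilon\,2^{-2k-1}$ for all $i$ and all $k$. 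Then I set $\rho_i:=\sum_{k\geq 1}2^k g_i^{(k)}$, a nonnegative Borel function. Summing the mass bounds gives $\int_X\rho_i\,d\mu<\varepsilon\sum_{k\geq1}2^{-k-1}=\varepsilon/2$ uniformly in $i$, so $\sup_i\int_X\rho_i\,d\mu\leq\varepsilon/2<\varepsilon$; and for each fixed $N\in\mathbb N$ and each $\gamma\in\Gamma$, using $\rho_i\geq\sum_{k=1}^N 2^k g_i^{(k)}$ together with superadditivity of $\liminf$ over a finite sum, $\liminf_i\int_\gamma\rho_i\,ds\geq\sum_{k=1}^N 2^k\liminf_i\int_\gamma g_i^{(k)}\,ds\geq\sum_{k=1}^N 2^k$, and letting $N\to\infty$ forces $\liminf_i\int_\gamma\rho_i\,ds=+\infty$.

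The only genuine point to check is that the construction in $(1)\Rightarrow(2)$ delivers both conclusions of (2) at once: the curve integrals must be driven to $+\infty$ while the masses stay uniformly below $\varepsilon$, which is exactly the effect of pairing the weights $2^k$ against the mass bound $\varepsilon\,2^{-2k-1}$ (any summable weighting against a sufficiently fast-decaying mass bound would do). A secondary technical point is the interchange of the sum with the line integral, justified by monotone convergence since all functions involved are nonnegative Borel functions and, along a curve $\gamma:[0,+\infty)\to X$ parameterized by arc length, $\int_\gamma\rho_i\,ds$ is merely the Lebesgue integral of $t\mapsto\rho_i(\gamma(t))$ on $[0,+\infty)$. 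I expect no other difficulties; this is the standard $\sigma$-subadditivity-type argument for modulus-like set functions.
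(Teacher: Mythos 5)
Your proof is correct and follows essentially the same route as the paper's: both directions are handled identically, and the $(1)\Rightarrow(2)$ construction is the same $\sigma$-subadditivity argument of summing countably many admissible sequences (after passing to subsequences to turn the $\liminf$ mass bounds into uniform ones) so that the total mass stays below $\varepsilon$ while each curve integral accumulates infinitely many contributions. The only cosmetic difference is that you amplify with weights $2^k$ against mass bounds $\varepsilon 2^{-2k-1}$, whereas the paper uses unit weights against mass bounds $2^{-j}\varepsilon/10$ and lets the infinitely many lower bounds of $1$ force the divergence.
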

	\begin{proof}\ 
		
		$(2)\Longrightarrow(1)$ is immediate.
		%follows from choosing the sequence of admissible $\rho_{i,j}:=\frac{\rho_i}{j}$.
		It suffices to show $(1)\Longrightarrow(2)$. Let ${\rm AM}(\Gamma)=0$. Let $\varepsilon>0$. Then for each $j$, we find a sequence of admissible functions $\rho_{i,j}$ satisfying
		\[
		\liminf_{i\to+\infty}\int_\gamma\rho_{i,j}ds\geq 1
		\]
		for all $\gamma\in\Gamma$ so that
		\[
		\liminf_{i\to+\infty} \int_X\rho_{i,j}d\mu\leq \frac{1}{2^j} \frac{\varepsilon}{100}.
		\]
		We set $\rho_i:=\sum_{j\in\mathbb N}\rho_{i,j}$. By taking a subsequence, still denoted $\rho_{i,j}$, we may assume that  
		\[
		\int_X\rho_{i,j}d\mu\leq \frac{1}{2^j}\frac{\varepsilon}{10} {\rm \ \ for \ all \ }i\in\mathbb N.
		\]
		Then 
		\[
		\liminf_{i\to+\infty}\int_\gamma \rho_ids=\liminf_{i\to+\infty}\sum_{j\in\mathbb N}\int_\gamma\rho_{i,j}ds\geq  \sum_{j\in\mathbb N}\liminf_{i\to+\infty} \int_\gamma\rho_{i,j}ds \geq \sum_{j\in\mathbb N}1=+\infty 
		\]
		for all $\gamma\in\Gamma$ and
		\[
		\sup_{i\in\mathbb N}   \int_X\rho_{i}d\mu= \sup_{i\in\mathbb N}\sum_{j\in\mathbb N}\int_X \rho_{i,j}d\mu \leq \sum_{j\in\mathbb N} \frac{1}{2^j}\frac{\varepsilon}{10}<\varepsilon.
		\]
		The proof completes.
	\end{proof}

	\subsection{Doubling measures and Poincar\'e inequalities}
	%Let $(X,d)$ be a metric space.
	We always assume every ball in $X$ to have strictly positive and finite $\mu$-measure.
	The Borel regular outer measure $\mu$ is called \textit{doubling} if  there exists a constant $C_d\geq 1$ such that for all balls $B(x,r)$  with radius $r>0$ and center at $x\in X$,
	\begin{equation}\label{eq2.4-0912}
		\mu(B(x,2r))\leq C_d\mu(B(x,r)).
	\end{equation}
	Here $C_d$ is called the \textit{doubling constant}.
	
	Let $1< Q<{+\infty}$. The Borel regular outer measure $\mu$ is said to be \textit{Ahlfors $Q$-regular} if there exists a constant $C_Q\geq 1$ such that for all balls $B(x,r)$ with radius $r>0$ and center at $x\in X$,
	\begin{equation}\label{eq2.4-1611}
		\frac{r^Q}{C_Q}\leq \mu(B(x,r))\leq C_Qr^Q.
	\end{equation}
	If $\mu$ is Ahlfors $Q$-regular where $1<Q<{+\infty}$, then $\mu$ is a doubling measure.

	Let $1\leq p<{+\infty}$. We say that $X$, or $\mu$, supports a  \textit{$p$-Poincar\'e inequality}
	%if every ball in $X$ has  finite positive measure and
	if there exist constants $C>0$ and $\lambda\geq 1$ such that 
	\begin{equation}\label{eq2.5-2711}
		\dashint_{B(x,r)}|u-u_{B(x,r)}|d\mu \leq C r\left (\dashint_{B(x,\lambda r)}\rho^pd\mu\right )^{1/p}
	\end{equation}
	for all balls $B(x,r)$ with radius $r>0$ and center at $x\in X$, and for all pairs $(u,\rho)$ satisfying \eqref{def-upper-gradient} such that $u$ is integrable on balls. Here $\lambda$ is called the {\it scaling constant} or {\it scaling factor} of the $p$-Poincar\'e inequality. 
	For more on Poincar\'e inequalities, we refer the interested reader to \cite{HK00,HK95,HKST15,Hei01}.

	\subsection{Hausdorff measures}
	Let $0\le \beta<{+\infty}$ and $0<R\leq{+\infty}$. The {\it $(\beta,R)$-Hausdorff content}  of a subset $E$, denoted  $\mathcal H^\beta_R(E)$, is given by
	\begin{equation}
		\notag
		\mathcal H^\beta_R(E)=\inf\left\{\sum_{k\in\mathbb N} r_k^\beta: E\subset \bigcup_{k\in\mathbb N}B_k\text{\rm \ \ and \ \ } 0<r_k<R \right\}
	\end{equation}
	where $B_k$ is a ball with radius $r_k$. The {\it $\beta$-Hausdorff measure} $\mathcal H^\beta$ of a subset $E$ is 
	\[
	\mathcal H^{\beta}(E):=\lim_{R\to 0}\mathcal H^\beta_R(E).
	\]
	When the measure $\mu$ is only assumed to be doubling, and not necessarily Ahlfors $Q$-regular, it is more natural to define the  \emph{codimension $1$ Hausdorff content} of a subset $E$, for  $0<R\le+\infty$, as follows:
	\[
	{\mathcal H}_R(E)=\inf\left\{\sum_{k\in\mathbb N}\frac{\mu(B_k)}{r_k}:E\subset\bigcup_{k\in\mathbb N}B_k\quad\text{and}\quad 0<r_k<R\right\}.
	\]
	The \emph{codimension $1$ Hausdorff measure} of a subset $E$ is then defined by 
	\begin{equation}\label{2.7-8thMarch}
		{\mathcal H}(E):=\lim_{R\to 0}{\mathcal H}_R(E).
	\end{equation}
	If $\mu$ is assumed to be Ahlfors $Q$-regular, then ${\mathcal H}_R\simeq{\mathcal H}^{Q-1}_R$ and ${\mathcal H}\simeq{\mathcal H}^{Q-1}$.
	%   By  \cite[Lemma 3.3]{KKST08}, we have that there is a constant $C>0$ depending on the doubling constant so that for any set $A\subset X$, 
	%\begin{equation}\label{eq2.3-0311}
	%	{\rm Cap}_1(A)\leq C\ \mathcal H_{+\infty}(A) \leq C\ \mathcal H(A).
	%\end{equation}
	
	For $A\subset X$,
	let $\Gamma_A$ be the family of all infinite curves
	$\gamma\in\Gamma^{+\infty}$ for which $\gamma\cap A\neq \emptyset$. 
	For any admissible sequence $\{ B_k\}_{k\in\mathbb N}$ for computing $\mathcal H_R(A)$, we have that the function $\sum_{k\in\mathbb N}\frac{\chi_{2B_k}}{r_k}$ is admissible for computing ${\rm Mod}_1(\Gamma_A)$ where $2B_k$ is the ball with radius $2r_k$ and the same center as $B_k$. Then we obtain that
	%Then we obtain from the definition of modulus and Hausdorff measure that
	\begin{equation}
		\label{eq:Mod and H}
		{\rm Mod}_1(\Gamma_A)\leq C_d \mathcal H(A)
	\end{equation}
	where $C_d$ is the doubling constant.

	\subsection{Bounded variation functions}\label{sec2.5}
	
	In this subsection, we recall the definition and basic properties of functions of bounded variation on metric spaces as in \cite{Mi03}, see also the monographs for the classical theory in the Euclidean setting \cite{AFP00,EG15,Fe69,Gi84,Zi89}. 
	
	Let $f\in L^1_{\rm loc}(X)$ be  a locally function on $X$. Given an open subset $\Omega\subseteq X$, we define the total variation of $f$ on $\Omega$ by 
	\begin{equation}\label{eq2.7-2711}\notag
		\| Df\|(\Omega):=\inf\left\{ \liminf_{i\to+\infty}\int_\Omega g_{f_i}d\mu: f_i\in N^{1,1}_{\rm loc}(\Omega), f_i\to f {\rm\ in\ }L^1_{\rm loc}(\Omega)\right\}
	\end{equation}
	where each $g_{f_i}$ is the minimal $1$-weak upper gradient of $f_i$ in $\Omega$. In \cite{Mi03}, local Lipschitz constants were used in place of upper gradients but the theory can be developed similarly with either definition. We say that a function $f\in L^1(\Omega)$ is of bounded variation, denoted $f\in {\rm BV}(\Omega)$, if the total variation  $\|Df\|(\Omega)$ of $f$ is finite. For an arbitrary subset $A\subset X$, we define 
	\begin{equation}\label{2.8}
		\| Df\|(A):=\inf \{\| Df\|(W): A\subset W, W\subset X {\rm \ is\ open} \}.
	\end{equation}
	%In general, we understand the expression $\|Df\|(A)<+\infty$ to mean that there is some open set
	%$W\supset A$ so that $f$ is defined on $W$ with $f\in L^1_{\rm loc}(W)$ and $\|Df\|(W)<+\infty$.
	By \cite[Theorem 3.4]{Mi03}, $\|Df\|$ is a Radon measure on a given open subset $\Omega$ for $f\in L^1_{\rm loc}(\Omega)$ and $\|Df\|(\Omega)<+\infty$. 
	For open subsets $A\subset X$, we say that $f\in {\rm BV}_{\rm loc}(A)$ if for each $x\in A$, there is a ball $B(x,r)$ with center at $x$ and radius $r>0$ so that $f\in{\rm BV}(B(x,r))$. If $X$ is proper, then $f\in{\rm BV}_{\rm loc}(A)$ is equivalent to having  $f\in {\rm BV}(A')$ for all open $A'$ such that $\overline{A'}$ is a compact
	subset of $A$. Here $X$ is called proper if every bounded and closed set is compact. For instance, from \cite[Lemma 4.1.14]{HKST15}, every complete doubling metric space is proper.
	We say that $f\in\dot{\rm BV}(A)$ if $f\in {\rm BV}_{\rm loc}(A)$ and $\|Df\|(A)<+\infty$.
	
	By the definition of total bounded variation of a function, a $1$-Poincar\'e inequality \eqref{eq2.5-2711} gives that the  inequality
	\begin{equation}\label{eq2.5-1611}
		\dashint_{B(x,r)}|f-f_{B(x,r)}|d\mu\leq C r \frac{\|Df\|(\lambda B(x,r))}{\mu(\lambda B(x,r))}
	\end{equation}
	holds true for all balls $B(x,r)$ and for every $f\in \dot {\rm BV}_{\rm loc}(X)$
	where $C, \lambda$ are the constant and scaling factor of $1$-Poincar\'e inequality respectively.

	A $\mu$-measurable set $E\subset X$ is said to be of finite perimeter if $\|D\chi_E\|(X)<+\infty$ where $\chi_E$ is the characteristic function of $E$. The perimeter of $E$ in $\Omega$ is denoted by 
	\[
	P(E,\Omega):=\|D\chi_E\|(\Omega).
	\]

	The ${\rm BV}$ norm and $\dot{\rm BV}$ seminorm of $f$ on $\Omega\subset X$ are respectively defined by 
	\[
	\| f\|_{\rm BV(\Omega)}:= \| f\|_{L^1(\Omega)} + \|Df\|(\Omega)
	\]
	and
	\[
	\|f\|_{\dot{\rm BV}(\Omega)}:=\|Df\|(\Omega).
	\]
	
	The measure-theoretic interior $I_E$ and exterior $O_E$ of a set $E\subset X$ are defined respectively by
	\[
	I_E:=\left\{x\in X:  \lim_{r\to 0} \frac{\mu(B(x,r)\setminus E)}{\mu(B(x,r))}=0 \right\}
	\]
	and
	\[
	O_E:=\left\{x\in X: \lim_{r\to 0} \frac{\mu(B(x,r)\cap E)}{\mu(B(x,r))}=0 \right\}.
	\]
	The measure-theoretic boundary $\partial^*E$ is defined as the set of points $x\in X$ at which both $E$ and its complement have strictly positive upper density, i.e.
	\[
	\limsup_{r\to 0} \frac{\mu(B(x,r)\setminus E)}{\mu(B(x,r))}>0\ \ {\rm and\ \ }  \limsup_{r\to 0} \frac{\mu(B(x,r)\cap E)}{\mu(B(x,r))}>0.
	\]
	Then $X=I_E\cup O_E\cup\partial^*E$.

	Given an open subset $\Omega\subset X$ and a $\mu$-measurable subset $E\subset X$ with $P(E,\Omega)<+\infty$, we have that for any Borel set $A\subset \Omega$,
	\begin{equation}
		P(E,A)=\int_{\partial^*E\cap A}\theta_Ed\mathcal H \label{eq2.5-0411}
	\end{equation}
	where $\theta_E:X\to[\alpha,C_d]$ with $C_d$ is the doubling constant and $\alpha>0$ depends on the doubling constant, the Poincar\'e constant and the scaling constant of Poincar\' e inequalities, see \cite[Theorem 5.3 and Theorem 5.4]{Am02}
	and \cite[Therem 4.6]{AMP04}.
	
	The lower and upper approximate limits of a function $f$  on $X$ are defined respectively by 
	\begin{equation}\label{eq2.6-1511}
		f^\wedge(x):= \sup\left\{ t\in\mathbb R: \lim_{r\to 0^+}\frac{\mu(B(x,r)\cap \{y\in X: f(y)<t\}) }{\mu(B(x,r))}=0 \right\}
	\end{equation}
	and
	\begin{equation}\label{equ2.12-0912}
		f^\vee(x):= \inf\left\{ t\in\mathbb R: \lim_{r\to 0^+}\frac{\mu(B(x,r)\cap \{y\in X: f(y)>t\}) }{\mu(B(x,r))}=0 \right\}
	\end{equation}
	for $x\in X$. We denote the jump set of $f$ as 
	\[
	S_f:= \{x\in X: f^\vee(x)-f^\wedge(x)>0 \}.
	\]
	When studying the fine properties of BV functions, we consider the pointwise representatives $f^\vee$ and $f^\wedge$. The following fact clarifies the relationship between the different pointwise representatives.
	%; it essentially follows from the Lebesgue point result for Newton-Sobolev functions given.
	\begin{proposition}
		[Proposition 3.10 in \cite{Pa20}] Let $\Omega\subset X$ be an open set and let $f\in N^{1,1}(\Omega)$. Then $f=f^\vee=f^\wedge$ for $\mathcal H$-a.e. in $\Omega$.
		\label{prop2.2-1511}
	\end{proposition}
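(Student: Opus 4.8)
The plan is to reduce the claim to a Lebesgue-point statement for Newtonian functions and then to pass from sets of zero $1$-capacity to $\mathcal H$-null sets. Throughout I use that $\mu$ is doubling and supports a $1$-Poincar\'e inequality (the standing hypotheses of this setting, under which $\mathcal H$ and $\mathrm{Cap}_1$ are the relevant gauges), and, since the assertion is local, I may work in balls with compact closure in $\Omega$, equivalently localize the definitions of $\mathrm{Cap}_1$ and of the maximal function to $\Omega$.

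\emph{Step 1: Lebesgue points determine $f^\vee$ and $f^\wedge$.} Call $x\in\Omega$ a Lebesgue point of $f$ if there is $\tilde f(x)\in\mathbb R$ with $\lim_{r\to0^+}\dashint_{B(x,r)}|f-\tilde f(x)|\,d\mu=0$. For such $x$ and $t>\tilde f(x)$,
\[
\frac{\mu(B(x,r)\cap\{y:f(y)>t\})}{\mu(B(x,r))}\le\frac{1}{t-\tilde f(x)}\dashint_{B(x,r)}|f-\tilde f(x)|\,d\mu\longrightarrow 0,
\]
so $f^\vee(x)\le\tilde f(x)$, and symmetrically $f^\wedge(x)\ge\tilde f(x)$. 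Since one always has $f^\wedge(x)\le f^\vee(x)$ (otherwise two distinct values $t$ in the interval $(f^\vee(x),f^\wedge(x))$ would each have $\mu(B(x,r)\cap\{f=t\})/\mu(B(x,r))\to1$, which is impossible), this forces $f^\wedge(x)=f^\vee(x)=\tilde f(x)$. Hence it suffices to show that $\mathcal H$-a.e.\ $x\in\Omega$ is a Lebesgue point of $f$ with $\tilde f(x)=f(x)$.

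\emph{Step 2: quasieverywhere Lebesgue points, then passage to $\mathcal H$.} Telescoping the averages $f_{B(x,2^{-k}r)}$ and applying the $1$-Poincar\'e inequality \eqref{eq2.5-2711} bounds the oscillation $\limsup_{r\to0^+}\dashint_{B(x,r)}|f-f_{B(x,r)}|\,d\mu$ at $x$ by a constant multiple of a localized maximal function $Mg_f(x)$ of the minimal $1$-weak upper gradient of $f$. Combining a capacitary weak-type estimate for $Mg_f$ with the quasicontinuity of Newtonian functions \cite{HKST15,BB15} gives that $1$-quasi every $x\in\Omega$ is a Lebesgue point of $f$ with $\tilde f(x)=f(x)$; this is the $p=1$ instance of the Lebesgue-point theorem for Newtonian functions. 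Finally, in a doubling space supporting a $1$-Poincar\'e inequality, every set of zero $1$-capacity has zero codimension-$1$ Hausdorff measure $\mathcal H$ (cf.\ \eqref{2.7-8thMarch}), so the exceptional set from the previous sentence is $\mathcal H$-null. Together with Step 1 this proves the proposition.

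I expect Step 2 to be the genuine obstacle: the quasieverywhere Lebesgue-point property is exactly where the $1$-Poincar\'e inequality is indispensable, and the case $p=1$ is delicate because the Hardy--Littlewood maximal operator is only of weak type, so one must work with the capacitary (or Hausdorff-content) weak-type inequality and combine it carefully with quasicontinuity — $\mu$-a.e.\ Lebesgue points together with quasicontinuity alone do not upgrade to $1$-q.e.\ Lebesgue points. An alternative to the capacity detour is to estimate $\mathcal H$ of the bad oscillation set directly via a boxing-type inequality. In either route one must keep the localization to $\Omega$ in mind throughout.
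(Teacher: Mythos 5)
The paper does not prove this proposition at all --- it is imported verbatim as \cite[Proposition 3.10]{Pa20} --- so there is no internal argument to compare against; I will instead compare your route with the one in the cited source. Your Step 1 (Chebyshev at a Lebesgue point gives $f^\vee(x)\le\tilde f(x)\le f^\wedge(x)$, hence equality since $f^\wedge\le f^\vee$ always) is correct and standard. Your Step 2 reduces everything to two imported theorems: the $1$-quasieverywhere Lebesgue point theorem for $N^{1,1}$ and the implication $\mathrm{Cap}_1(E)=0\Rightarrow\mathcal H(E)=0$. Both are true for complete doubling spaces supporting a $1$-Poincar\'e inequality (the latter is the Hakkarainen--Kinnunen equivalence of $\mathrm{Cap}_1$-null and $\mathcal H$-null sets), and you correctly identify the former as the crux and correctly diagnose why the naive maximal-function argument fails at $p=1$. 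But be aware that this is the heaviest possible ingredient: the $p=1$ Lebesgue point theorem is itself proved via the boxing inequality and capacitary/Hausdorff-content strong-type estimates (Kinnunen--Korte--Shanmugalingam--Tuominen, the circle of results behind \cite{KKST14}), and it is not contained in the general references \cite{HKST15,BB15} you point to for this step, which only give quasicontinuity and the $p>1$ case. As written, your proof is a correct reduction to a cited theorem rather than a proof, which is admittedly the same standard the paper holds itself to.

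The argument in \cite{Pa20} is lighter and worth knowing: by Lebesgue differentiation, $f=f^\vee=f^\wedge$ $\mu$-a.e.; $f$ is $1$-quasicontinuous (as an $N^{1,1}$ function), while $f^\wedge$ is quasi lower semicontinuous and $f^\vee$ is quasi upper semicontinuous (this paper's Proposition \ref{proposition2.4-0411}, from \cite{LS17,Pa18}); and quasi-(semi)continuous functions that are ordered $\mu$-a.e.\ are ordered $1$-q.e. This yields $f\le f^\wedge\le f^\vee\le f$ outside a set of zero $1$-capacity, and one concludes with the same $\mathrm{Cap}_1$-to-$\mathcal H$ passage you use. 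That route avoids the $p=1$ Lebesgue point theorem entirely, at the price of the (still nontrivial, but much softer) lemma upgrading $\mu$-a.e.\ inequalities to $1$-q.e.\ inequalities for quasi-semicontinuous functions. Finally, note that the proposition carries no explicit hypotheses on $X$, so in either approach one must make the standing assumptions (completeness, doubling, $1$-Poincar\'e) explicit --- without them neither the quasi-semicontinuity nor the capacity--Hausdorff comparison is available --- and your localization to balls compactly contained in $\Omega$ should be accompanied by a remark that the q.e.\ Lebesgue point theorem is applied after restriction/extension, since it is usually stated globally.
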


	By \cite[Theorem 5.3]{AMP04}, the variation measure of a ${\rm BV}$ function can be decomposed into the absolutely continuous and singular part, and the latter into the Cantor and jump parts which are all Radon measures as follows. Given an open set $\Omega\subset X$ and $f\in{\rm BV}(\Omega)$, we have for any Borel set $A\subset \Omega$,
	\begin{align}
		\|Df\|(A)=& \|Df\|^a(A)+\|Df\|^s(A)\notag\\
		=& \|Df\|^a(A)+\|Df\|^c(A)+\|Df\|^j(A)\notag\\
		=& \int_A a d\mu + \|Df\|^c(A) +\int_{A\cap S_f} \int_{f^\wedge(x)}^{f^\vee(x)}\theta_{\{f>t \}}(x)dtd\mathcal H(x) \label{eq2.6-0711}
	\end{align}
	where $a\in L^1(\Omega)$ is the density of the absolutely continuous part and the functions $\theta_{\{ f>t\}}\in[\alpha,C_d]$ as in \eqref{eq2.5-0411}. It follows that $S_f$ is $\sigma$-finite with respect to $\mathcal H$.
	
	We say that a set $A\subset X$ is $1$-quasiopen  if for every $\varepsilon>0$ there is an open set $G\subset X$ such that ${\rm Cap}_1(G)<\varepsilon$ and $A\cup G$ is open.
	We say that a set $A\subset X$ is $1$-quasiclosed if $X\setminus A$ is $1$-quasiopen.
	%Here the $1$-capacity ${\rm Cap}_1$ is defined on whole space $X$ even though the definition of quasiopen/quasiclosed sets with respect to a subset.
	%When $H=X$, we omit mention of it.
	We refer the interested reader to \cite{Pa19,Lahti20} for more discussion on quasiopen sets.

	Given an open set $\Omega\subset X$, we say that $f$ is $1$-quasi (lower/upper semi-)continuous on $\Omega$ if for every $\varepsilon>0$ there exists an open set $G\subset X$ such that ${\rm Cap}_1(G)<\varepsilon$ and $f|_{\Omega\setminus G}$ is real-valued (lower/upper semi-)continuous. 
	
	It's a well-known fact that $N^{1,1}$-functions are quasicontinuous, see for instance \cite[Theorem 5.29]{BB15}. By Corollary 4.2 in \cite{Pa18} and Theorem 1.1 in \cite{LS17}, we obtain that ${\rm BV}$ functions have a partially analogous quasi-semicontinuity property as below.
	\begin{proposition}\label{proposition2.4-0411}
		Let $\Omega\subset X$ be open and let $f\in L^1_{\rm loc}(\Omega)$ with $\|Df\|(\Omega)<+\infty$. Then $f^\wedge$ is $1$-quasi lower semicontinuous and $f^\vee$ is $1$-quasi upper semicontinuous on $\Omega$.
	\end{proposition}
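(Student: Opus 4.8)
The plan is to reduce the assertion to the $1$-quasiopenness of the superlevel sets of $f^\wedge$, and then to invoke the known fact that the measure-theoretic interior of a set of finite perimeter is $1$-quasiopen. First, since $\{-f>t\}=\{f<-t\}$, the definitions \eqref{eq2.6-1511}--\eqref{equ2.12-0912} give $(-f)^\vee=-f^\wedge$, so the claim about $f^\vee$ being $1$-quasi upper semicontinuous follows by applying the claim about $f^\wedge$ being $1$-quasi lower semicontinuous to $-f$; thus it suffices to treat $f^\wedge$. Second, $1$-quasi lower semicontinuity is a local property: covering $\Omega$ by countably many balls $B_i$ with $f\in{\rm BV}(B_i)$ (possible by definition of ${\rm BV}_{\rm loc}(\Omega)$ and separability), if $f^\wedge$ is $1$-quasi lower semicontinuous on each $B_i$ then, given $\varepsilon>0$, choosing open $G_i$ with ${\rm Cap}_1(G_i)<2^{-i}\varepsilon$ and $f^\wedge|_{B_i\setminus G_i}$ lower semicontinuous, the set $G:=\bigcup_iG_i$ works on $\Omega$ by the countable subadditivity of ${\rm Cap}_1$ and the locality of lower semicontinuity. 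Hence we may assume $f\in{\rm BV}(B)$ for a fixed ball $B$.

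Recall now that a function $g$ is lower semicontinuous precisely when $\{g>\lambda\}$ is open for every $\lambda$ in a fixed countable dense set $D\subset\mathbb R$. Consequently, if each superlevel set $\{f^\wedge>\lambda\}$, $\lambda\in D$, is $1$-quasiopen, then for $\varepsilon>0$ we may pick open sets $G_\lambda$ with $\sum_{\lambda\in D}{\rm Cap}_1(G_\lambda)<\varepsilon$ and $\{f^\wedge>\lambda\}\cup G_\lambda$ open; setting $G:=\bigcup_{\lambda\in D}G_\lambda$, we get ${\rm Cap}_1(G)<\varepsilon$, and $f^\wedge|_{\Omega\setminus G}$ is lower semicontinuous since $\{f^\wedge>\lambda\}\cap(\Omega\setminus G)=\big(\{f^\wedge>\lambda\}\cup G_\lambda\big)\cap(\Omega\setminus G)$ is relatively open. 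So it is enough to show that $\{f^\wedge>\lambda\}$ is $1$-quasiopen for each $\lambda\in\mathbb R$.

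Unwinding \eqref{eq2.6-1511}, one checks the identity $\{f^\wedge>\lambda\}=\bigcup_{t\in\mathbb Q,\,t>\lambda}I_{\{f>t\}}$. As $t\mapsto I_{\{f>t\}}$ is non-increasing, the union may be taken along any sequence $t_k\downarrow\lambda$, and by the coarea formula for ${\rm BV}$ functions on $B$ such a sequence can be chosen with $P(\{f>t_k\},B)<+\infty$ for all $k$. Thus $\{f^\wedge>\lambda\}$ is a countable union of measure-theoretic interiors of sets of finite perimeter, and because a countable union of $1$-quasiopen sets is $1$-quasiopen (again by subadditivity of ${\rm Cap}_1$), the proof comes down to the single fact that $I_E$ is $1$-quasiopen whenever $E$ has finite perimeter in $B$.

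This last fact is the main obstacle and is exactly what I would take from the cited literature. By \eqref{eq2.5-0411} the perimeter of $E$ equals $\int_{\partial^*E\cap\,\cdot}\theta_E\,d\mathcal H$ with $\theta_E\ge\alpha>0$, so $\partial^*E$ has locally finite $\mathcal H$-measure; combining this with the relationship between $\mathcal H$ and ${\rm Cap}_1$ (cf. \eqref{eq:Mod and H}), one covers $\partial^*E$ outside a set of arbitrarily small $1$-capacity in such a way that $I_E=X\setminus(O_E\cup\partial^*E)$ becomes $1$-quasiopen; equivalently, since $\chi_E^\wedge=\chi_{I_E}$, the lower approximate limit of the characteristic function of a finite-perimeter set is $1$-quasi lower semicontinuous. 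This is the content of \cite[Corollary 4.2]{Pa18}, and is closely tied to the quasi-semicontinuity results of \cite[Theorem 1.1]{LS17}. Granting it, $\{f^\wedge>\lambda\}$ is $1$-quasiopen for every $\lambda$, and the reductions above finish the proof.
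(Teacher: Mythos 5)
The paper does not actually prove this proposition: it is imported wholesale from the literature, with the one-line justification ``By Corollary 4.2 in \cite{Pa18} and Theorem 1.1 in \cite{LS17}.'' Your proposal does more work than the paper in that it correctly unpacks what such a citation must deliver: the reduction $(-f)^\vee=-f^\wedge$, the localization via countable subadditivity of ${\rm Cap}_1$, the reduction of quasi lower semicontinuity to $1$-quasiopenness of the superlevel sets $\{f^\wedge>\lambda\}$ over a countable dense set of $\lambda$, the identity $\{f^\wedge>\lambda\}=\bigcup_{t\in\mathbb Q,\,t>\lambda}I_{\{f>t\}}$ together with the coarea formula to select levels of finite perimeter, and the stability of quasiopenness under countable unions. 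All of these steps are sound, and the final reduction to ``$I_E$ is $1$-quasiopen for $E$ of finite perimeter'' is indeed the content of the cited results, so as a citation-backed argument your proof is acceptable and in the same spirit as the paper's.

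Two caveats. First, your heuristic for the key fact --- covering $\partial^*E$ ``outside a set of arbitrarily small $1$-capacity'' --- would not work as an actual proof: $\partial^*E$ has locally finite, generally strictly positive, codimension $1$ Hausdorff measure (think of a half-space in $\mathbb R^n$), hence positive $1$-capacity, so it cannot be absorbed into the exceptional set $G$. The genuine proof in \cite{Pa18,LS17} is a quantitative density/boxing-type argument showing that, off a small-capacity set, every point of $I_E$ has a neighborhood avoiding $O_E\cup\partial^*E$; you correctly defer to the literature here, but the sketch as written is misleading. Second, the paper's definition of quasi lower semicontinuity requires $f^\wedge|_{\Omega\setminus G}$ to be \emph{real-valued}; your superlevel-set argument only yields extended-real-valued lower semicontinuity, and the additional fact that $\{f^\wedge=\pm\infty\}$ has zero $1$-capacity (again part of \cite{LS17}) is needed and not addressed.
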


	\subsection{Chain conditions}
	\label{sec-dyadic}
	
	In this paper, we employ the following  annular chain property which is given in \cite[Section 2.5]{KN23}.
	\begin{definition} 
		\label{dyadic}
		Let $\lambda\geq1$. We say that $X$ satisfies an annular $\lambda$-chain condition at $O$  if the following holds. There are constants $c_1\geq 1, c_2\geq 1, \delta>0$ and a finite number $M<{+\infty}$  so that given $r>0$ and points $x,y\in B(O,r)\setminus B(O,r/2)$, one can find balls $B_1, B_2, \ldots, B_k$ with the following properties:
		\begin{enumerate}
			\item[1.] $k\leq M$.
			\item[2.] $B_1=B(x,r/(\lambda c_1))$, $B_k=B(y,r/(\lambda c_1))$ and the radius of  each $B_i$ is $r/(\lambda c_1)$ for $1\leq i\leq k$. 
			\item[3.] $ B_i\subset B(O,c_2r)\setminus B(O,r/c_2)$ for $1\leq i\leq k$.
			\item[4.] For each $1\leq i\leq k-1$, there is a ball $D_i\subset B_i\cap B_{i+1}$ with radius $\delta r$.
		\end{enumerate}
		If $X$ satisfies an annular $\lambda$-chain condition at $O$ for every $\lambda\geq 1$, we say that $X$ has the annular chain property.
	\end{definition}
	
	We refer to \cite{KN23, JKN23} for more discussion on the annular chain condition on complete metric measure spaces.

	\subsection{Lebesgue representative}
	Let $f\in L^1_{\rm loc}(X)$. The Lebesgue representative $f^{*}$ of $f$ is defined by
	\begin{equation}
		\label{eq-lebes}
		f^*(x):=\limsup_{r\to0}\dashint_{B(x,r)}fd\mu.
	\end{equation}
	By Lebesgue's differentiation theorem, see for instance \cite[Page 77]{HKST15}, we have that
	\begin{equation}\label{2.19-29Feb}
		f^*(x)=f(x) \text{\rm \ \ for $\mu$-a.e. $x\in X$.}
	\end{equation}
	%By Proposition \ref{prop2.2-1511} and \eqref{eq2.3-0311}, one has that 
	%\begin{equation}\label{eq2.24-1stFeb}
	%{\rm if \ }f\in N^{1,1}(X),{\rm \ then\ }   f=f^\vee=f^\wedge=f^* {\rm \ \ holds \ outside\  a\  zero\ capacity\ set.}
	%\end{equation}
	By \cite[Theorem 1]{KKST14}, we have that if $f\in \dot {\rm BV}(X)$ then there is a constant $0<\gamma\leq \frac{1}{2}$ only depending on the doubling constant and the constants in the Poincar\'e inequality so that
	\begin{equation}
		\label{2.26-25Feb}
		(1-\gamma)f^\wedge(x) +\gamma f^\vee (x) \leq f^*(x) \leq (1-\gamma)f^\vee(x) +\gamma f^\wedge (x) \text{\rm \ \ holds true for $\mathcal H$-a.e. $x\in X$}.
	\end{equation}

	\section{Proof of Theorem \ref{theorem1.2}}

	\begin{proof}[Proof of Theorem \ref{theorem1.2}]

		Let $f\in \dot{\rm BV}(X)$.
		Here is it convenient to consider $f$ to be defined at every point in $X$.
		There is a sequence of functions $f_i\in \dot {N}^{1,1}_{\rm loc}(X)$  so that $f_i\to f$ in $L^{1}_{\rm loc}(X)$ and 
		\[ 
		\liminf_{i\to+\infty}\int_X g_{f_i}d\mu<+\infty.
		\]
		Passing to a subsequence (not relabeled), we also have $f_i(x)\to f(x)$ for $\mu$-a.e. $x\in X$.
		We have
		\begin{equation}\label{eq2.21-0912}
			\text{\rm $f^\vee(\gamma(t))=f^\wedge(\gamma(t))=f(\gamma(t))$ for all $t\notin N_\gamma$}
		\end{equation}
		for a subset $N_{\gamma}\subset[0,+\infty)$ with $\mathcal L^1(N_\gamma)=0$;
		this is obtained because $\mu(S_f)=0$ and so  $\mathcal H^1(\gamma^{-1}(S_f))=0$ for $1$-a.e. infinite curve $\gamma$, and hence for ${\rm AM}$-a.e. infinite curve $\gamma$ parameterized by arc-length, one can pick the set $N_\gamma\subset[0,+\infty)$ with $\mathcal L^1(N_\gamma)=0$ so that $\gamma^{-1}(S_f)\subset N_\gamma$.
		Moreover, one can pick $N_\gamma$ such that 
		for ${\rm AM}$-a.e. infinite curve $\gamma$ parameterized by arc-length,
		$\lim_{i\to+\infty} f_{i}(\gamma(t))=f(\gamma(t))$ for all $t\notin N_\gamma$.
		By Lemma \ref{lem2.2-2508},  we have that
		\[
		\liminf_{i\to+\infty}\int_0^{+\infty} g_{f_{i}}\circ \gamma(t)dt  
		=  \liminf_{i\to+\infty}\int_\gamma g_{f_{i}}ds<+\infty
		\]
		for {\rm AM}-a.e. infinite curve $\gamma$ parameterized by arc-length.
		
		Thus for ${\rm AM}$-a.e. infinite curve $\gamma$ parameterized by arc-length,
		for some subsequence $f_{i,\gamma}$ we have that
		$f_{i,\gamma}\in L^1_{\rm loc}(X)$ such that 
		\begin{equation}\label{eq2.20-0912}
			\sup_{i\in\mathbb N} \int_0^{+\infty}(g_{f_{i,\gamma}}{\circ\gamma})(t)dt<+\infty{\rm \ \ and \ \ }   
			\lim_{i\to+\infty} f_{i,\gamma}(\gamma(t))=f(\gamma(t)) \text{\rm \ \ for all $t\notin N_\gamma$}.
		\end{equation}
		
		By the definition of upper gradients, we have that  for ${\rm AM}$-a.e. infinite curve $\gamma$ parameterized by arc-length and for $0<t_1<t_2<+\infty$, 
		\begin{equation}\label{eq2.22-0912}
			|  f_{i,\gamma}(\gamma(t_1))-f_{i,\gamma}(\gamma(t_2)) | \leq \int_{t_1}^{t_2} (g_{f_{i,\gamma}}\circ \gamma) (t)dt=: \nu_{i,\gamma}([t_1,t_2])
		\end{equation}
		for each $i\in\mathbb N$ where $\nu_{i,\gamma}$ is a measure on $[0,+\infty)$ defined by $d\nu_{i,\gamma}(t):=(g_{f_{i,\gamma}}\circ\gamma)(t)dt$ for  $i\in\mathbb N$.
		
		Let $\gamma$ be an infinite curve so that $N_\gamma, \{ f_{i,\gamma}\}_{i\in\mathbb N}$ and $\{\nu_{i,\gamma}\}_{i\in\mathbb N}$ defined as above satisfy 
		\eqref{eq2.21-0912}-\eqref{eq2.20-0912}-\eqref{eq2.22-0912}. It then suffices to prove that the limit
		\[
		\lim_{t\to+\infty,t\notin N_\gamma}f(\gamma(t))
		\]
		exists. Since the sequence $\nu_{i,\gamma}$ is bounded on $[0,+\infty)$
		%with $1$-dimensionally Euclidean distanc
		by \eqref{eq2.20-0912}, it follows from \cite[Theorem 1.41]{EG15}  that there is a subsequence $\{\nu_{i_k,\gamma}\}_{k\in\mathbb N}$ so that it weakly* converges to a measure $\nu$ on $[0,\infty)$ and  we then have from the lower semicontinuity properties in  \cite[Theorem 1.40]{EG15}  that 
		\begin{equation}\label{eq2.23-0912}
			\nu([0,+\infty)) \leq \liminf_{k\to+\infty} \nu_{i_k,\gamma}([0,+\infty))\leq  \sup_{i\in\mathbb N} \int_0^{+\infty}(g_{f_{i,\gamma}}{\circ\gamma})(t)dt =:m_\gamma<+\infty,
		\end{equation}
		where $m_\gamma$ is finite by \eqref{eq2.20-0912},
		and that
		\begin{equation}\label{eq2.24-0912}
			\limsup_{k\to+\infty}  \nu_{i_k,\gamma} ([a,b]) \leq \nu ([a,b]) \text{\rm \ \ for all closed intervals $[a,b]\subset(0,+\infty)$}.
		\end{equation}
		Let $t_1,t_2\in (0,+\infty)$ be arbitrary so that $t_2> t_1$ and $t_1,t_2\notin N_\gamma$. We also denote by $\{f_{i_k,\gamma}\}_{k\in\mathbb N}$ the subsequence of $\{f_{i,\gamma}\}_{i\in\mathbb N}$ corresponding to the subsequence $\{\nu_{i_k,\gamma}\}_{k\in\mathbb N}$.
		We have that
		\begin{align*}
			|f (\gamma(t_1))-f (\gamma(t_2))|
			&=\lim_{k\to+\infty} |f_{i_k,\gamma}(\gamma(t_1))-f_{i_k,\gamma}(\gamma(t_2))| \\
			&\leq  \limsup_{k\to+\infty}  \int_{t_1}^{t_2} (g_{f_{i_k,\gamma}}\circ \gamma)(t) dt\\
			&=  \limsup_{k\to+\infty} \nu_{i_k,\gamma}([t_1,t_2])\\
			&\leq \nu ([t_1,t_2])
		\end{align*}
		where the first equation is given by \eqref{eq2.20-0912}, the first inequality is obtained by \eqref{eq2.22-0912} and the last estimate is given by \eqref{eq2.24-0912}. Since $\nu$ is a bounded measure on $[0,+\infty)$ by \eqref{eq2.23-0912}, letting $t_1,t_2\to+\infty$, it follows that there is $c_\gamma\in\mathbb R$ such that
		\begin{equation}\notag
			\lim_{t\to+\infty, t\notin N_\gamma}f(\gamma(t))=c_\gamma.
		\end{equation}
		%    exists. 
		The claim follows.
	\end{proof}
	
	\section{Proofs of Theorem \ref{theorem1.5} and Corollary \ref{thm1.1}}
	\subsection{Proof of Theorem \ref{theorem1.5}}
	\ 
	
	Recall that for $f\in \dot N^{1,1}(X)$, $f$ is absolutely continuous on $1$-a.e. infinite curve $\gamma$ and ${\rm Mod}_1(\{\gamma\in\Gamma^{+\infty}: \int_\gamma g_fds=+\infty \})=0$. Then we obtain that
	%Recall from the definition of upper gradients in Section \ref{sec2.2} that 
	\begin{equation}\label{thm1.1-0711}
		\text{\rm for every $f\in \dot {N}^{1,1}(X)$, the limit\ } \lim_{t\to+\infty} f (\gamma(t)) \text{\rm \ \ exists for $1$-a.e. infinite curve $\gamma$.}
	\end{equation}
	By \cite[Proposition 5.2]{La20}, we have the following lemma saying
	that any bounded variation function with ``small jumps" can be approximated by Sobolev functions.
	
	\begin{lemma}\label{lem3.1-0711}
		
		Suppose that $(X,d,\mu)$ is a complete doubling unbounded metric measure space supporting a $1$-Poincar\'e inequality.
		Let $\varepsilon>0$. Let  $f\in \dot{\rm BV}(X)$ and
		\begin{equation}\label{eq3.1-2711}
			X_{\varepsilon}:= \{x\in X: f^\vee(x)-f^\wedge(x) <\varepsilon  \}.
		\end{equation}
		Then there exists  a function  $h_{\varepsilon}\in \dot {N}^{1,1}(X_\varepsilon)$ so that
		\[
		f^\vee(x)-10\varepsilon \leq h_{\varepsilon}(x)\leq f^\wedge(x)+ 10\varepsilon \ \ \text{\rm for all  $x\in X_\varepsilon$.}
		\]
		
		In particular,% $h_\varepsilon$ can be taken so that 
		\[
		\max \{|h_\varepsilon(x)-f^\vee(x)|, |h_\varepsilon(x)-f^\wedge(x)| \} \leq 10 \varepsilon \ \  \text{\rm for all  $x\in X_\varepsilon$.}
		\]
	\end{lemma}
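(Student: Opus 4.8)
The plan is to obtain this as an essentially immediate consequence of \cite[Proposition 5.2]{La20}, which provides, on a set where a ${\rm BV}$ function has uniformly small jumps, a Newtonian approximant trapped between $f^\vee$ and $f^\wedge$ up to an error proportional to the jump bound. So the proof reduces to (i) checking that $X_\varepsilon$ is the kind of set to which that proposition applies, (ii) invoking it, and (iii) reading off the pointwise two-sided bound in the stated normalization, after which the ``in particular'' clause is a triviality.

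For step (i) I would first observe that $X_\varepsilon$ is $1$-quasiopen. Indeed, by Proposition \ref{proposition2.4-0411} the function $f^\vee$ is $1$-quasi upper semicontinuous and $f^\wedge$ is $1$-quasi lower semicontinuous on $X$; hence $f^\vee-f^\wedge$ is $1$-quasi upper semicontinuous, since given $\eta>0$ one may delete the union of the two open sets of $1$-capacity $<\eta$ furnished by those two statements, on whose complement both representatives are real-valued and $f^\vee-f^\wedge$ is upper semicontinuous. Therefore $X_\varepsilon=\{\,f^\vee-f^\wedge<\varepsilon\,\}$ is $1$-quasiopen. Since $(X,d,\mu)$ is complete, doubling and supports a $1$-Poincar\'e inequality, the hypotheses of \cite[Proposition 5.2]{La20} are in force; applying it with the set $X_\varepsilon$ yields $h_\varepsilon\in\dot N^{1,1}(X_\varepsilon)$ satisfying
\[
f^\vee(x)-10\varepsilon\ \le\ h_\varepsilon(x)\ \le\ f^\wedge(x)+10\varepsilon\qquad\text{for all }x\in X_\varepsilon,
\]
the constant $10$ being chosen large enough to absorb the structural constant appearing in the cited statement, which here is harmless because $f^\vee-f^\wedge<\varepsilon$ throughout $X_\varepsilon$.

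The ``in particular'' assertion then follows purely from $f^\wedge\le f^\vee$ on $X$: from the displayed inequalities, $-10\varepsilon\le h_\varepsilon(x)-f^\vee(x)\le f^\wedge(x)-f^\vee(x)+10\varepsilon\le 10\varepsilon$, and likewise $-10\varepsilon\le f^\vee(x)-f^\wedge(x)-10\varepsilon\le h_\varepsilon(x)-f^\wedge(x)\le 10\varepsilon$, so $\max\{|h_\varepsilon(x)-f^\vee(x)|,|h_\varepsilon(x)-f^\wedge(x)|\}\le 10\varepsilon$ for every $x\in X_\varepsilon$.

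I expect the only genuine obstacle to be the faithful reduction to \cite[Proposition 5.2]{La20}: one must be sure that proposition is quoted for the correct ``small-jump'' set and that the quantitative sandwich it delivers can be put in the clean form above with a universal constant such as $10$. A self-contained argument would instead build $h_\varepsilon$ as a Whitney-type discrete convolution of $f$ subordinate to $X_\varepsilon$, bound its upper gradient via the $1$-Poincar\'e inequality, and use the smallness of the jumps together with the fact that the relevant local averages of $f$ stay (asymptotically) between $f^\wedge$ and $f^\vee$ in order to control $h_\varepsilon-f^\vee$ and $h_\varepsilon-f^\wedge$ even near $\partial X_\varepsilon$; that boundary control is the delicate point, and it is precisely what \cite[Proposition 5.2]{La20} packages.
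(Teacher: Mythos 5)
Your proposal is correct and follows essentially the same route as the paper: both use Proposition \ref{proposition2.4-0411} to verify that $X_\varepsilon$ is $1$-quasiopen and then reduce the construction of $h_\varepsilon$ to \cite[Proposition 5.2]{La20}. The only minor difference is that the paper additionally records that the level sets $A_i=\{x\in X_\varepsilon: f^\vee(x)\ge(i+1)\varepsilon\}$ are $1$-quasiclosed and $D_i=\{x\in X_\varepsilon: f^\wedge(x)>(i-2)\varepsilon\}$ are $1$-quasiopen, since the argument of \cite[Proposition 5.2]{La20} is re-run on these sets rather than invoked verbatim for $X_\varepsilon$ alone — precisely the ``faithful reduction'' issue you flag yourself.
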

	\begin{proof}\ 
		
		By Proposition \ref{proposition2.4-0411}, we have that $X_\varepsilon$ is $1$-quasiopen and for each $i\in\mathbb N$, the set $
		A_i:=\{ x\in X_\varepsilon: f^\vee(x)\geq (i+1)\varepsilon \}$
		is $1$-quasiclosed and the set
		$
		D_i:=\{x\in X_\varepsilon: f^\wedge(x)>(i-2)\varepsilon  \}
		$
		is $1$-quasiopen. 
		By the  arguments of \cite[Proposition 5.2]{La20} applied to $X_\varepsilon, A_i, D_i$, we obtain the claim.
	\end{proof}
	
	%where $S_{f,\varepsilon}:= \{x\in X: f^\vee(x)-f^\wedge(x)\geq \varepsilon\}$. 

	\begin{lemma}\label{lem3.1-0211}
		Let $\varepsilon> 0$. Suppose that  $f\in \dot{\rm BV}(X)$. We denote 
		\[
		S_{f,\varepsilon}:=\{x\in X: f^\vee(x)-f^\wedge(x) \geq \varepsilon  \}.
		\]
		% so that
		%\begin{equation}\label{equ4.2-0912}
		%\mathcal H (S_{f,\varepsilon})<+\infty.
		%\end{equation}
		Then 
		\[
		{\rm Mod}_1(\Gamma_\varepsilon)=0,
		\]
		where $\Gamma_\varepsilon$ consists all infinite curves $\gamma\in\Gamma^{+\infty}$ satisfying
		\begin{equation}\label{eq4.2-0912}
			\# \left\{i\in\mathbb N: A_i \cap \gamma\cap S_{f,\varepsilon}\neq\emptyset \right\}=+\infty,
		\end{equation}
		where $A_i$ is the annulus $B(O,2^{i+1})\setminus B(O,2^i)$.
	\end{lemma}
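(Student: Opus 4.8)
The plan is to derive the statement from the Hausdorff-content bound \eqref{eq:Mod and H} by a Borel--Cantelli argument for the $1$-modulus, the key preliminary fact being that the ``$\varepsilon$-jump set'' $S_{f,\varepsilon}$ has finite codimension-$1$ Hausdorff measure. Throughout, the standing hypotheses (complete, doubling, supporting a $1$-Poincar\'e inequality) are what make \eqref{eq2.5-0411}--\eqref{eq2.6-0711} and \eqref{eq:Mod and H} available.

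\emph{Step 1: $\mathcal H(S_{f,\varepsilon})<+\infty$.} First I would note that $f^\vee$ and $f^\wedge$ are Borel functions, so $S_{f,\varepsilon}$ is a Borel set, and that $S_{f,\varepsilon}\subset S_f$. For a fixed $R>0$, since $f\in L^1_{\rm loc}(X)$ and $\|Df\|(B(O,R))\le\|Df\|(X)<+\infty$ we have $f\in{\rm BV}(B(O,R))$, so the jump part of the decomposition \eqref{eq2.6-0711}, applied with $\Omega=B(O,R)$ and the Borel set $A=S_{f,\varepsilon}\cap B(O,R)$, together with $\theta_{\{f>t\}}\ge\alpha$ and the fact that on $S_{f,\varepsilon}$ the inner interval has length $f^\vee(x)-f^\wedge(x)\ge\varepsilon$, gives
\[
\|Df\|(X)\ \ge\ \int_{S_{f,\varepsilon}\cap B(O,R)}\int_{f^\wedge(x)}^{f^\vee(x)}\theta_{\{f>t\}}(x)\,dt\,d\mathcal H(x)\ \ge\ \alpha\varepsilon\,\mathcal H\big(S_{f,\varepsilon}\cap B(O,R)\big).
\]
Letting $R\to+\infty$ and using that $\mathcal H$ is a Borel measure yields $\mathcal H(S_{f,\varepsilon})\le\|Df\|(X)/(\alpha\varepsilon)<+\infty$.

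\emph{Step 2: the Borel--Cantelli modulus estimate.} Write $A_i=B(O,2^{i+1})\setminus B(O,2^i)$; these sets are pairwise disjoint, so $\sum_{i\in\mathbb N}\mathcal H(S_{f,\varepsilon}\cap A_i)\le\mathcal H(S_{f,\varepsilon})<+\infty$. For $m\in\mathbb N$ let $T_m$ be the family of infinite curves $\gamma$ with $\gamma\cap S_{f,\varepsilon}\cap A_i\neq\emptyset$ for some $i\ge m$, so that $T_m=\bigcup_{i\ge m}\Gamma_{S_{f,\varepsilon}\cap A_i}$. By countable subadditivity of ${\rm Mod}_1$ and \eqref{eq:Mod and H},
\[
{\rm Mod}_1(T_m)\ \le\ \sum_{i\ge m}{\rm Mod}_1\big(\Gamma_{S_{f,\varepsilon}\cap A_i}\big)\ \le\ C_d\sum_{i\ge m}\mathcal H(S_{f,\varepsilon}\cap A_i)\ \longrightarrow\ 0\quad\text{as }m\to+\infty,
\]
being the tail of a convergent series. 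On the other hand, if $\gamma\in\Gamma_\varepsilon$ then by \eqref{eq4.2-0912} there are infinitely many indices $i$ with $\gamma\cap S_{f,\varepsilon}\cap A_i\neq\emptyset$, so $\gamma\in T_m$ for every $m$; thus $\Gamma_\varepsilon\subset\bigcap_{m\in\mathbb N}T_m$, and monotonicity of the modulus gives ${\rm Mod}_1(\Gamma_\varepsilon)\le\inf_{m}{\rm Mod}_1(T_m)=0$.

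The only genuinely substantive point is Step 1: converting finiteness of the total variation $\|Df\|(X)$ into finiteness of $\mathcal H(S_{f,\varepsilon})$ via the jump part of \eqref{eq2.6-0711} and the uniform lower bound $\theta_{\{f>t\}}\ge\alpha>0$, along with the routine exhaustion by balls reducing from $\dot{\rm BV}$ to ${\rm BV}$. Step 2 is a standard Borel--Cantelli argument and presents no real difficulty, using only countable subadditivity and monotonicity of ${\rm Mod}_1$ and the content bound \eqref{eq:Mod and H}.
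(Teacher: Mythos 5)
Your proposal is correct and follows essentially the same route as the paper: finiteness of $\mathcal H(S_{f,\varepsilon})$ is extracted from the jump part of the decomposition \eqref{eq2.6-0711} together with the lower bound $\theta_{\{f>t\}}\ge\alpha$, and then the conclusion follows by the Borel--Cantelli argument combining countable subadditivity of ${\rm Mod}_1$ with the content bound \eqref{eq:Mod and H} over the disjoint annuli $A_i$. Your Step 1 is slightly more careful than the paper's (exhausting by balls to justify applying the decomposition, which is stated for ${\rm BV}(\Omega)$, to a $\dot{\rm BV}$ function), but this is a technical refinement rather than a different argument.
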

	
	Roughly speaking, each $\gamma$ in $\Gamma_\varepsilon$ meets the jump set $S_{f,\varepsilon}$ at infinity.% if $S_{f,\varepsilon}$  is unbounded.
	
	\begin{proof}\ 
		By \eqref{eq2.6-0711}, we have that 
		\[
		\int_{S_f} f^\vee(x)-f^\wedge(x) d\mathcal H(x)\leq \frac{1}{\alpha} \|Df\|^j (X) <+\infty
		\]
		where $\alpha>0$ as defined in $\eqref{eq2.5-0411}$ and \eqref{eq2.6-0711}.  It follows that  for every $\varepsilon>0$, 
		\begin{equation}\label{equ4.2-0912}
			\mathcal H (S_{f,\varepsilon})<+\infty.
		\end{equation}

		%If $S_{f,\varepsilon}$ is a bounded set, then \eqref{eq4.2-0912} gives that $\Gamma_\varepsilon$ is the empty set and so ${\rm Mod}_1(\Gamma_\varepsilon)=0$ which is the claim. Hence we will assume that $S_{f,\varepsilon}$ is an unbounded set.
		For $i\in\mathbb N$, we let $\Gamma_i$ be the family of all infinite curves intersecting
		$A_i\cap S_{f,\varepsilon}$. 
		One has that for all $j\in\mathbb N$,
		\begin{equation} \notag
			\Gamma_\varepsilon\subseteq \bigcup_{i=j}^{+\infty} \Gamma_i
		\end{equation}
		and hence
		\begin{equation}\label{eq3.1-0711}
			{\rm Mod}_1(\Gamma_\varepsilon) \leq \lim_{j\to+\infty}\sum_{i=j}^{+\infty} {\rm Mod}_1(\Gamma_i).
		\end{equation}
		By \eqref{eq:Mod and H}, we have that there is a constant $C>0$ so that for each $i$, 
		\[
		{\rm Mod}_1(\Gamma_i)\leq C \mathcal H(A_i\cap S_{f,\varepsilon}).
		\]
		Summing over $i\in\mathbb N$ we obtain from \eqref{equ4.2-0912}  that
		\[
		\lim_{j\to+\infty}\sum_{i=j}^{+\infty} \mathcal H(A_i\cap S_{f,\varepsilon}) =0.
		\]
		By %\eqref{2.2-0311} and
		\eqref{eq3.1-0711} it follows that ${\rm Mod}_1(\Gamma_\varepsilon)=0$ which is the claim.
	\end{proof}

	\begin{proof}[Proof of Theorem \ref{theorem1.5}]
		\ 
		
%		By \eqref{eq2.6-0711}, we have that 
%		\[
%		\int_{S_f} f^\vee(x)-f^\wedge(x) d\mathcal H(x)\leq \frac{1}{\alpha} \|Df\|^j (X) <+\infty
%		\]
%		where $\alpha>0$ as defined in $\eqref{eq2.5-0411}$ and \eqref{eq2.6-0711}. 
		Just as in \eqref{equ4.2-0912}, for every $\varepsilon>0$ we have 
		\[
		\mathcal H(S_{f,\varepsilon}) <+\infty
		\]
		where $S_{f,\varepsilon}:= \{x\in X: f^\vee(x)-f^\wedge(x)\geq \varepsilon\}$. By Lemma \ref{lem3.1-0211}, 
		\begin{equation}\label{equ3.4-0711}
			{\rm Mod}_1(\Gamma_{\varepsilon})=0
		\end{equation}
		where $\Gamma_{\varepsilon}$ is the collection of all infinite curves $\gamma\in\Gamma^{+\infty}$ satisfying 
		\[
		\# \left\{i\in\mathbb N: A_i \cap \gamma\cap S_{f,\varepsilon}\neq\emptyset \right\}=+\infty
		\]
		where $A_i$ is the annulus $B(O,2^{i+1})\setminus B(O,2^i)$.

		Let $\varepsilon_i>0$ be a sequence so that $\varepsilon_i>\varepsilon_{i+1}$ and $\varepsilon_i\to 0$ as $i\to+\infty$. By \eqref{equ3.4-0711} and the subadditivity of ${\rm Mod}_1$, we have 
		\begin{equation}\label{eq4.7-1stFeb}
			{\rm Mod}_1(\cup_{i\in\mathbb N}\Gamma_{\varepsilon_i})=0.
		\end{equation}
		It then suffices to prove that 
		\begin{equation}\label{eq3.3-0711}
			\lim_{t\to+\infty} f^\vee(\gamma(t)) {\rm \ and\ } \lim_{t\to+\infty} f^\wedge(\gamma(t)) \text{\rm \ \ exist for $1$-a.e. infinite curve $\gamma\in \Gamma^{+\infty}\setminus (\cup_{i\in\mathbb N}\Gamma_{\varepsilon_i})$}=:\Gamma.
		\end{equation}
		Here if the claim \eqref{eq3.3-0711} holds then the value of both limits is the same by \eqref{eq4.7-0912}.
		For each $i\in\mathbb N$, we have from the definition of $\Gamma_{\varepsilon_i}$ as above that

		\begin{equation}\label{eq4.7-0912}
			f^\vee(\gamma(t))-f^\wedge(\gamma(t))<\varepsilon_i \text{\rm \ \ for all   $\gamma\notin \Gamma_{\varepsilon_i}$ and for all $t$ sufficiently large} .
		\end{equation}
		For each $i\in\mathbb N$, we have from Lemma \ref{lem3.1-0711} that there exists $h_{\varepsilon_i}\in \dot N^{1,1}(X_{\varepsilon_i})$ so that  
		\begin{equation}\label{eq3.4-0711}
			\max \{|h_{\varepsilon_i}(x)-f^\vee(x)|, |h_{\varepsilon_i}(x)-f^\wedge(x)| \} \leq 10 \varepsilon_i
		\end{equation}
		for all  $x\in X_{\varepsilon_i}$ where
		\[
		X_{\varepsilon_i}= X\setminus S_{f,\varepsilon_i}=\{x\in X: f^\vee(x)-f^\wedge(x)<\varepsilon_i \}.
		\] 
		By the same argument as in \eqref{thm1.1-0711},
		% and \eqref{eq4.7-1stFeb},
		there is a family $\Gamma_i\subset\Gamma$ with ${\rm Mod}_1(\Gamma_i)=0$ so that 
		\[
		\lim_{t\to+\infty}h_{\varepsilon_i}(\gamma(t))
		\]
		exists for all $\gamma\in \Gamma\setminus \Gamma_i$. Then ${\rm Mod}_1(\cup_{i\in\mathbb N}\Gamma_i)=0$ and 
		\begin{equation}\label{equ3.5-0711}
			\lim_{t\to+\infty}h_{\varepsilon_i}(\gamma(t)) \text{\rm \ \ exists for all $i\in\mathbb N$, for all $\gamma\in \Gamma\setminus (\cup_{i\in\mathbb N}\Gamma_i)$}.
		\end{equation}
		Let $\Gamma_{\rm good}:= \Gamma\setminus (\cup_{i\in\mathbb N}\Gamma_i)$. To prove \eqref{eq3.3-0711}, we only need to show that 
		\begin{equation}
			\label{eq3.5-0711} \lim_{t\to+\infty}f^\vee(\gamma(t)) {\rm\ and\ }\lim_{t\to+\infty}f^\wedge(\gamma(t)) \text{\rm \ \ exist for all $\gamma\in\Gamma_{\rm good}$}.
		\end{equation}
		We prove \eqref{eq3.5-0711} by a contradiction. Let $\gamma_{\rm good}\in \Gamma_{\rm good}$ be arbitrary. Suppose that 
		\[
		\lim_{t\to+\infty}f^\vee(\gamma_{\rm good}(t))  \text{\rm \ \   does not exist }
		\]
		and hence there exist $\varepsilon_{\gamma_{\rm good}}>0$ and two sequences $\{t_{j}\}_{j\in\mathbb N}$, $\{ s_{j}\}_{j\in\mathbb N}$ so that 
		\[
		\lim_{j\to+\infty}t_j=\lim_{j\to+\infty}s_j=+\infty
		\]
		and
		\[
		f^\vee(\gamma_{\rm good}(t_j)) - f^\vee(\gamma_{\rm good}(s_j))\geq \varepsilon_{\gamma_{\rm good}} \text{\rm \ \ for all $j\in\mathbb N$}.
		\]
		Applying \eqref{eq3.4-0711}, we have that for all $i\in\mathbb N$,
		\[
		|h_{\varepsilon_{i}} (\gamma_{\rm good}(t))- f^\vee(\gamma_{\rm good}(t))| \leq 10 \varepsilon_{i} \text{\rm \ \ holds for all $t$ sufficiently large.}
		\]
		Combining two estimates above, we obtain that for all $i\in\mathbb N$,
		\[
		|h_{\varepsilon_i} (\gamma_{\rm good}(t_j))- h_{\varepsilon_i}(\gamma_{\rm good}(s_j))| \geq \varepsilon_{\gamma_{\rm good}}- 20\varepsilon_i
		\]
		holds for all  $j$ sufficiently large. Since $\lim_{i\to+\infty}\varepsilon_i=0$, we pick $i$ sufficiently large so that the right hand side above is bounded from below  by $\varepsilon_{\gamma_{\rm good}}/2$. Then the limit at infinity  of  $h_{\varepsilon_i}$, for such $i$ large, along such $\gamma_{\rm good}\in\Gamma_{\rm good}$ does not exist which is a contradiction with \eqref{equ3.5-0711}. 
		Thus $\lim_{t\to+\infty}f^\vee(\gamma_{\rm good}(t))$ exists. Similarly, we also obtain that $\lim_{t\to+\infty}f^\wedge(\gamma_{\rm good}(t))$ exists.
		We conclude that $\eqref{eq3.5-0711}$ holds and so the  claim follows.
	\end{proof}

	\subsection{Proof of Corollary \ref{thm1.1}}
	\label{sec3}
	\
	
	%Let $|\Gamma|$ be the union of the trajectories of the curves in the family of $\Gamma$.

	%\begin{lemma}\label{lem3.3-1809}
	%    Let $\Gamma$ be a family of infinite curves so that ${\rm Mod}_1(\Gamma)>0$. 
	%    Then $\mu(|\Gamma|\setminus B)=+\infty$ for any given ball $B$.
	%\end{lemma}
	%\begin{proof}\ 
	%
	%Let $B$ be a given ball. We denote ${\rm Mod}_1(\Gamma)=:M>0$.
	%    Let $A_k:=B(O,2^{k+1})\setminus B(O,2^k)$, $E_k:=\bigcup_{\gamma\in \Gamma}\left (\gamma\cap A_k\right )$, and $\Gamma_{E_k}$ be the collection of all infinite curves $\gamma\in \Gamma$ such that $\gamma\cap E_k\neq \emptyset$ where $k\in\mathbb N$.
	%Since $\chi_{E_k}/2^k$ is admissible for computing $\text{\rm Mod}_1(\Gamma_{E_k})$, we observe that 
	%	\[ \mu(E_k)\geq \text{\rm Mod}_1(\Gamma_{E_k}) 2^{k} \text{\rm \ \ for all $k\in\mathbb N$}.
	%	\]
	%Notice that $\Gamma\subset\bigcup_{k\geq n}\Gamma_{E_k}$ for each $n\in\mathbb N$. Moreover, there is $k_0\in\mathbb N$ so that $E_{k}\cap B=\emptyset$ for all $k\geq k_0$. Hence  for $n>k_0$,
	%	\begin{align*}
	%	\mu(|\Gamma|\setminus B)\geq \sum_{k\geq n}\mu(E_k)\geq\sum_{k\geq n}\text{\rm Mod}_1(\Gamma_{E_k})2^{k}
	%	\geq 2^{n} \sum_{k\geq n}\text{\rm Mod}_1(\Gamma_{E_k})
	%	\geq2^{n}\text{\rm Mod}_1(\Gamma)\geq 2^{n} M.
	%	\end{align*}
	% Here the last inequality is given by the assumption of ${\rm Mod}_1(\Gamma)=:M>0$.
	% Letting $n\to+\infty$, we conclude that $\mu(|\Gamma|\setminus B)=+\infty$. The claim follows.
	%\end{proof}
	
	\begin{theorem}\label{thm3.5-0911}
		Suppose that $(X,d,\mu)$ is a complete doubling unbounded metric measure space supporting a $1$-Poincar\'e inequality
		and satisfying an annular $\lambda$--chain condition, where $\lambda\ge 1$ is the scaling factor in the Poincar\'e inequality.
		%and has the annular chain condition. 
		Then for every $f\in \dot{\rm BV}(X)$, there exists a finite constant $c_f\in\mathbb R$ so that
		\begin{equation} \label{4.13-29Feb}\lim_{t\to+\infty}f^\vee(\gamma(t))=\lim_{t\to+\infty}f^\wedge(\gamma(t))=\lim_{t\to+\infty}f^*(\gamma(t))=c_f
		\end{equation}
		for ${\rm 1}$-a.e. infinite curve $\gamma$.
	\end{theorem}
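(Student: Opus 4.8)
The plan is to feed Theorem~\ref{theorem1.5}, which already yields a \emph{curve-dependent} limit $c_\gamma$, into a chaining argument across the scales $2^j$ in the spirit of the $\dot N^{1,1}$-argument of \cite{KN23}, in order to upgrade $c_\gamma$ to a single constant $c_f$; the statement for the Lebesgue representative $f^*$ is then read off from \eqref{2.26-25Feb}.

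First I would dispose of the degenerate case: if ${\rm Mod}_1(\Gamma^{+\infty})=0$ --- equivalently, by \cite{KN23}, if $\mathcal R_1=+\infty$ --- then every property holds vacuously for $1$-a.e.\ infinite curve, so I may assume $\mathcal R_1<+\infty$, hence $\mu(B(O,2^j))\ge\mu\big(B(O,2^j)\setminus B(O,2^{j-1})\big)\gtrsim 2^{j}/\mathcal R_1$. By Theorem~\ref{theorem1.5} there is a $1$-exceptional family outside of which $c_\gamma:=\lim_{t\to+\infty}f^\vee(\gamma(t))=\lim_{t\to+\infty}f^\wedge(\gamma(t))\in\mathbb R$ exists; enlarging the exceptional family by $\bigcup_i\Gamma_{\varepsilon_i}$ (Lemma~\ref{lem3.1-0211}) and by $\Gamma_{E}$ for the $\mathcal H$-null set $E$ on which \eqref{2.26-25Feb} fails (which is $1$-exceptional by \eqref{eq:Mod and H}), I may in addition assume that $f^\vee(\gamma(t))-f^\wedge(\gamma(t))\to0$ as $t\to+\infty$ and that $\gamma(t)\notin E$ for all $t$; then the sandwich \eqref{2.26-25Feb} forces $\lim_{t\to+\infty}f^*(\gamma(t))=c_\gamma$ as well. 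Thus it remains to prove that $c_\gamma$ is $1$-a.e.\ equal to a constant $c_f$ independent of $\gamma$.

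For this I would fix a reference good curve $\gamma_0$ and aim at $c_\gamma=c_{\gamma_0}$ for $1$-a.e.\ $\gamma$. Given such a $\gamma$ and a large $j$, let $p_j=\gamma(t_j)$, $q_j=\gamma_0(s_j)$ be the first points at which $\gamma,\gamma_0$ reach distance $\tfrac34\cdot 2^j$ from $O$ (so $t_j,s_j\to+\infty$ and $p_j,q_j\in B(O,2^j)\setminus B(O,2^{j-1})$), join $p_j$ to $q_j$ by an annular $\lambda$-chain $B_1,\dots,B_k$ at scale $r=2^j$ from Definition~\ref{dyadic} (so $k\le M$, $B_1=B(p_j,2^j/(\lambda c_1))$, $B_k=B(q_j,2^j/(\lambda c_1))$, every $B_i\subset B(O,c_2 2^j)\setminus B(O,2^j/c_2)$, and $B_i\cap B_{i+1}\supset D_i$ with $D_i$ a ball of radius $\delta 2^j$), and telescope the averages along this chain. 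Using the ${\rm BV}$ Poincar\'e inequality \eqref{eq2.5-1611}, the doubling property (which gives $\mu(B_i)\approx\mu(D_i)$ and $\mu(\lambda B_i)\approx\mu(B(O,2^j))\gtrsim 2^j/\mathcal R_1$), and $k\le M$, one gets
\[
\big|f_{B_1}-f_{B_k}\big|\ \lesssim\ \mathcal R_1\,\|Df\|\big(B(O,C 2^j)\setminus B(O,C^{-1}2^j)\big)\ \longrightarrow\ 0\qquad(j\to+\infty),
\]
because $\|Df\|$ is a finite measure and these shells eventually avoid any fixed ball. The same estimate, run along a geometrically spaced sequence of scales, also shows that $f_{B(z_j,2^j/(\lambda c_1))}$ converges, as $j\to+\infty$, to a limit that does not depend on the choice of centres $z_j\in B(O,2^j)\setminus B(O,2^{j-1})$.

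What remains --- and what I expect to be the main obstacle --- is to pass from these large-scale ball averages back to the pointwise values along the curve, i.e.\ to show that for $1$-a.e.\ $\gamma$ one has $\big|f^*(\gamma(t))-f_{B(\gamma(t),\,d(O,\gamma(t))/(10\lambda c_1))}\big|\to0$ as $t\to+\infty$. Telescoping down through the dyadic scales below $2^j$ turns this into the vanishing, as $j\to+\infty$, of a truncated Riesz-type potential of $\|Df\|$ at $\gamma(t_j)$, uniformly along $1$-a.e.\ curve; this is handled exactly as in \cite{KN23}, using the fine (capacitary) properties of ${\rm BV}$ and Sobolev representatives. The cleanest packaging is to run the whole argument on the Sobolev approximants $h_{\varepsilon_i}\in\dot N^{1,1}(X_{\varepsilon_i})$ of Lemma~\ref{lem3.1-0711}: each $h_{\varepsilon_i}$ is $1$-quasicontinuous and satisfies $\max\{|h_{\varepsilon_i}-f^\vee|,|h_{\varepsilon_i}-f^\wedge|\}\le10\varepsilon_i$ on the full-measure set $X_{\varepsilon_i}$, the chain estimate applies to it verbatim (via \eqref{eq2.5-1611}), and quasicontinuity makes the passage from ball averages to curve values routine; this yields constants $d_i\in\mathbb R$ with $\lim_{t\to+\infty}h_{\varepsilon_i}(\gamma(t))=d_i$ for $1$-a.e.\ $\gamma$, and since $|c_\gamma-d_i|\le10\varepsilon_i$ (from Lemma~\ref{lem3.1-0711} and the proof of Theorem~\ref{theorem1.5}), we conclude that $(d_i)$ is Cauchy and $c_\gamma=\lim_i d_i=:c_f$ for $1$-a.e.\ $\gamma$. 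Putting the pieces together gives $\lim_{t\to+\infty}f^\vee(\gamma(t))=\lim_{t\to+\infty}f^\wedge(\gamma(t))=\lim_{t\to+\infty}f^*(\gamma(t))=c_f$ for $1$-a.e.\ $\gamma$, as claimed.
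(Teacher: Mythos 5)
Your proposal is correct and its skeleton coincides with the paper's: existence and equality of the three limits come from Theorem~\ref{theorem1.5} together with the sandwich \eqref{2.26-25Feb} (this is exactly Remark~\ref{rem1.5-12nd4}), and uniqueness comes from the annular $\lambda$-chain telescoping with the ${\rm BV}$ Poincar\'e inequality \eqref{eq2.5-1611}, as in \cite{KN23}. The one place where you genuinely diverge is the final ``ball averages to curve values'' step. The paper runs the \cite{KN23} uniqueness argument directly on the representative $f^*$ (noting $f^*\in\dot{\rm BV}(X)$ and using \eqref{eq2.5-1611} in place of the Sobolev Poincar\'e inequality), whereas you route the identification through the Sobolev approximants $h_{\varepsilon_i}\in\dot N^{1,1}(X_{\varepsilon_i})$ of Lemma~\ref{lem3.1-0711}: get a constant limit $d_i$ for each $h_{\varepsilon_i}$, deduce $|c_\gamma-d_i|\le 10\varepsilon_i$ for $1$-a.e.\ $\gamma$, and conclude that $c_\gamma$ is a.e.\ constant. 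That last deduction is clean and correct. What it buys you is that you never have to rerun the capacitary/Riesz-potential part of \cite{KN23} for a genuine ${\rm BV}$ function; what it costs is that you must invoke the \cite{KN23} uniqueness theorem for functions that are only in $\dot N^{1,1}$ of the $1$-quasiopen set $X_{\varepsilon_i}$ rather than of all of $X$ (curves may meet $S_{f,\varepsilon_i}$, whose curve family has finite but not necessarily zero $1$-modulus) --- this is the same extension the paper itself glosses over when it asserts \eqref{equ3.5-0711} in the proof of Theorem~\ref{theorem1.5}, so it is an acceptable debt, but it deserves a sentence. Also note that the paper does not need your preliminary reduction to $\mathcal R_1<+\infty$, since the constant $c_f$ is extracted from the convergent ball averages rather than from a reference curve; your reduction is harmless but only needed because you anchor $c_f$ at a ``good'' curve $\gamma_0$.
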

	\begin{proof}\ 
		
		By Theorem \ref{theorem1.5} and Remark \ref{rem1.5-12nd4}, the three limits at infinity exist and are equal to the same value for $1$-a.e. infinite curve, i.e.  for every $f\in\dot{\rm BV}(X)$, for $1$-a.e. $\gamma\in\Gamma^{+\infty}$, there is $c_{f,\gamma}\in\mathbb R$ so that 
		\begin{equation}\label{4.14-29Feb}
			\text{\rm $\lim_{t\to+\infty}f^\vee(\gamma(t))=\lim_{t\to+\infty}f^\wedge(\gamma(t))=\lim_{t\to+\infty}f^*(\gamma(t))=c_{f,\gamma}$}.
		\end{equation}
		Notice that $f^*\in\dot{\rm BV}(X)$ since $f\in\dot{\rm BV}(X)$ and since $f^*(x)=f(x)$ for $\mu$-a.e. $x\in X$ by \eqref{2.19-29Feb}.
		Applying the Poincar\'e inequality \eqref{eq2.5-1611} with respect to the ${\rm BV}$-case, we use the same arguments as in the case of Sobolev functions (see \cite[Proof of Theorem 1.1]{KN23})  to obtain the uniqueness part, i.e. there is a finite constant $c_f\in \mathbb R$ so that 
		\[
		\lim_{t\to+\infty}f^*(\gamma(t))=c_f {\rm \ \ for \ } \text{\rm \ $1$-a.e.\ } \gamma\in\Gamma^{+\infty}.
		\]
		Combining this with \eqref{4.14-29Feb}, the uniqueness part for $f^\vee$ and $f^\wedge$ is obtained
		and so
		the claim \eqref{4.13-29Feb} holds. The proof completes.
	\end{proof}

	\begin{proof}
		[Proof of Corollary \ref{thm1.1}]\

		By  Theorem \ref{thm3.5-0911}, we obtain that the existence and uniqueness of limits at infinity of $\dot{\rm BV}$-functions along $1$-a.e. infinite curve hold true.
		
		Next, we consider the case of $f\in {\rm BV}(X)$. Since $f\in \dot{\rm BV}(X)$, there is a finite constant $c_f$ so that $\lim_{t\to+\infty}f^*(\gamma(t))=c_f$ for $1$-a.e. infinite curve $\gamma\in\Gamma^{+\infty}$. We will show that $c_f=0$. We argue by contradiction.  Suppose that $|c_f|>0$.
		Consider the family $\Gamma$ of all infinite curves $\gamma\in\Gamma^{+\infty}$ for which
		$\lim_{t\to+\infty}f^*(\gamma(t))=c_f$; we can assume that ${\rm Mod}_1(\Gamma^{+\infty})>0$
		and
		%then also 
		${\rm Mod}_1(\Gamma)>0$.

		For each $\gamma\in\Gamma$, there is $s_\gamma\ge 0$ 
		so that $|f^*(\gamma(s))| \ge |c_f|/2$ for all $s\ge s_\gamma$. 
		Let $M>0$. For every $\gamma \in\Gamma$, we have
		\[
		\int_{\gamma}M^{-1}|f^*|\,ds
		=\int_{0}^{+\infty}M^{-1}|f^*(\gamma(s))|\,ds
		\ge\int_{s_\gamma}^{+\infty}M^{-1}|f^*(\gamma(s))|\,ds
		\ge \int_{s_\gamma}^{+\infty}M^{-1}|c_f|/2\,ds
		=+\infty.
		\]
		Since now $f\in L^1(X)$, we have
		\[
		{\rm Mod}_1(\Gamma)\le M^{-1}\int_X |f^*|\,d\mu \to 0
		\]
		as $M\to+\infty$. This is a contradiction, and so the proof completes.
		% and there is 
		% a family $\Gamma$ of all infinite curves $\gamma$ with $\text{\rm Mod}_1(\Gamma)>0$ such that
		%		\[\lim_{\gamma\ni x, d(O,x)\to+\infty}f^*(x)\equiv c_f \text{\rm \ \ for every $\gamma \in \Gamma$}.
		%		\]
		%  Then for each $\gamma\in\Gamma$, there is $\gamma'\in\Gamma^{+\infty}$ with $\gamma'\subset\gamma$ so that $|f^*|_{\gamma'}|\geq |c_f|/2$. We denote $\Gamma'=\cup_{\gamma\in\Gamma}\gamma'$ and so $0<{\rm Mod}_1(\Gamma)\leq {\rm Mod}_1(\Gamma')$.
		%  By Lemma \ref{lem3.3-1809}, we have $\mu(|\Gamma'|\setminus B)=+\infty$ for any given ball $B$ which is a contradiction to $f^*\in {L}^1(|\Gamma'|,\mu)$ and $|f^*|\geq |c_f|/2>0$ on $\Gamma'$. Here $f^*\in L^1(|\Gamma'|,\mu)$ since $f\in L^1(X)$ and $f^*=f$ $\mu$-a.e. by \eqref{2.19-29Feb}. 
		%    The proof completes.
	\end{proof}
	
	\section{Proof of Theorem \ref{thm1.2}}
	Recall that $\mu(X)>0$ by our standing assumption, see the beginning of Section \ref{section2}. We will use it in the proof of the following lemma.
	\begin{lemma}
		\label{lem4.1-1809} 
		Let $(X,d,\mu)$ be a complete doubling unbounded metric measure space supporting a $1$-Poincar\'e inequality.
		Suppose that $\mathcal R_1=+\infty$. Then the following statements hold true:
		\begin{enumerate}
			\item 
			There is a function $f\in {\rm BV}(X)\setminus \dot{N}^{1,1}(X)$ so that 
			\begin{equation}\label{eq4.1-1411}
				\lim_{t\to+\infty} f^\vee(\gamma(t)) {\rm \ \ and\ \ }  \lim_{t\to+\infty} f^\wedge(\gamma(t)) \text{\rm \ \ fail to exist for all infinite curves $\gamma$}.
			\end{equation}
			\item   There is a function $f\in N^{1,1}(X)$ so that 
			\begin{equation}\label{eq4.2-1411}
				\lim_{t\to+\infty} f(\gamma(t)) \text{\rm \ \ fails to exist for all infinite curves $\gamma$}.
			\end{equation}
			\item  
			% If moreover $\mu(X)=\infty$, then
			There is a function $f\in \dot N^{1,1}(X)\setminus N^{1,1}(X)$ so that 
			\begin{equation}\label{eq5.2-revise}
				\lim_{t\to+\infty} f(\gamma(t)) \text{\rm \ \ fails to exist for all infinite curves $\gamma$}.
			\end{equation}
			
		\end{enumerate}
	\end{lemma}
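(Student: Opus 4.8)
Throughout write $A_j:=B(O,2^{j+1})\setminus B(O,2^j)$, so $\mathcal R_1=\sup_j 2^j/\mu(A_j)$. The hypothesis $\mathcal R_1=+\infty$ says precisely that the sequence $\bigl(2^j/\mu(A_j)\bigr)_j$ is unbounded, so the plan is to first fix indices $j_1<j_2<\cdots$ with $j_{k+1}\ge j_k+2$ and $\mu(A_{j_k})\le 2^{-2k}2^{j_k}$; then the closed annuli $\overline{A_{j_k}}$ are pairwise disjoint and ``thin''. Each of the three functions will be a radial oscillation built on these annuli, and in each case the limits fail for one elementary reason, which I would record at the outset: if $\gamma\colon[0,+\infty)\to X$ is an infinite curve parameterized by arc length, then $t\mapsto d(O,\gamma(t))$ is continuous, unbounded above, and bounded by $d(O,\gamma(0))+t$, so by the intermediate value theorem there are times $t_k\to+\infty$ with $d(O,\gamma(t_k))$ in any prescribed subinterval of $A_{j_k}$, and times $s_k\to+\infty$ with $d(O,\gamma(s_k))$ in any prescribed ``gap'' $(2^{j_k+1},2^{j_{k+1}})$. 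I would also note that a complete doubling space supporting a $1$-Poincar\'e inequality is quasiconvex, hence connected and reverse doubling, so that $\mu(B(O,R))\to+\infty$ and $\mu(X)=+\infty$; this is used in (3).

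For (3) I would take $f:=\sum_k\phi_{j_k}$, where $\phi_j:=\min\{1,\max\{0,2^{-j}(d(O,\cdot)-2^j)\}\}$ is $2^{-j}$-Lipschitz, equals $0$ on $B(O,2^j)$ and $1$ off $B(O,2^{j+1})$, with minimal $1$-weak upper gradient at most $2^{-j}\chi_{A_j}$. On each ball this is a finite sum of Lipschitz functions, so $f\in L^1_{\mathrm{loc}}(X)$ and $f$ is continuous; disjointness of the $A_{j_k}$ gives $\int_X g_f\,d\mu\le\sum_k 2^{-j_k}\mu(A_{j_k})\le\sum_k 2^{-2k}<+\infty$, hence $f\in\dot N^{1,1}(X)$, while $f\ge\phi_{j_1}\equiv 1$ on $X\setminus B(O,2^{j_1+1})$, a set of infinite measure, so $f\notin L^1(X)$ and thus $f\in\dot N^{1,1}(X)\setminus N^{1,1}(X)$; since any infinite curve leaves every ball, $f(\gamma(t))$ is unbounded in $t$ and the limit cannot exist, giving \eqref{eq5.2-revise}. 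For (2) I would keep the same thin annuli but pick $a_k<b_k$ in $[2^{j_k},2^{j_k+1}]$ to be points at which $R\mapsto\mu(B(O,R))$ is differentiable with derivative $\le 8\cdot 2^{-2k}$ (most radii, by a Chebyshev argument for this monotone function), with $b_k$ close enough to $a_k$ that $\mu(\{a_k<d(O,\cdot)<b_k\})<2^{-k}$; then replace the ``indicator of the shell'' by the Lipschitz radial bump $\psi_k$ equal to $1$ on $\{a_k<d(O,\cdot)<b_k\}$ and $0$ off a slightly wider shell $\{a_k-\delta_k<d(O,\cdot)<b_k+\delta_k\}\subset A_{j_k}$, with $\delta_k$ small enough (using the chain rule and the differentiability of $\mu(B(O,\cdot))$ at $a_k,b_k$) that $\int_X g_{\psi_k}\,d\mu\le 2^{-k}$ and $\int_X\psi_k\,d\mu\le 2^{-k+1}$. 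Then $f:=\sum_k\psi_k\in\dot N^{1,1}(X)\cap L^1(X)=N^{1,1}(X)$ and it oscillates between $1$ on the shells and $0$ on the gaps, so the limit fails along every infinite curve, giving \eqref{eq4.2-1411}.

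For (1) I would take $f:=\sum_k\chi_{E_k}$ with disjoint open shells $E_k:=\{a_k<d(O,\cdot)<b_k\}\subset A_{j_k}$ chosen as follows. The coarea inequality for the $1$-Lipschitz function $d(O,\cdot)$ on the finite-measure set $A_{j_k}$ gives
\[
\int_{2^{j_k}}^{2^{j_k+1}}P(B(O,t),X)\,dt\le\|D\,d(O,\cdot)\|(A_{j_k})\le\mu(A_{j_k}),
\]
so by Chebyshev the set of ``good radii'' $t\in[2^{j_k},2^{j_k+1}]$ with $P(B(O,t),X)\le 8\,\mu(A_{j_k})2^{-j_k}\le 8\cdot 2^{-2k}$ has measure at least $\tfrac78 2^{j_k}$. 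I would take $a_k$ a Lebesgue density point of this good set with $\mu(S(O,a_k))=0$, and then a good radius $b_k$ slightly larger than $a_k$, close enough that $\mu(E_k)\le\mu(B(O,b_k))-\mu(B(O,a_k))<2^{-k}$ --- possible because good radii accumulate at $a_k$ from the right and $R\mapsto\mu(B(O,R))$ is continuous at $a_k$. Then $\sum_k\mu(E_k)<+\infty$ shows $f\in L^1(X)$, and $\|Df\|(X)\le\sum_k P(E_k,X)\le\sum_k\bigl(P(B(O,a_k),X)+P(B(O,b_k),X)\bigr)<+\infty$ by lower semicontinuity of total variation along the partial sums, so $f\in{\rm BV}(X)$. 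That $f\notin\dot N^{1,1}(X)$ I would deduce from the jump set: connectedness of $X$ together with $\mu(E_1)>0$ and $\mu(X\setminus E_1)>0$ force $P(E_1,X)>0$, hence $\mathcal H(\partial^*E_1)>0$ by \eqref{eq2.5-0411}; since $f^\vee=1$ and $f^\wedge=0$ on $\partial^*E_1$, the jump part of $\|Df\|$ is positive by \eqref{eq2.6-0711}, which is impossible for a function in $\dot N^{1,1}(X)$ (alternatively, $f=f^\vee$ $\mathcal H$-a.e.\ would contradict Proposition \ref{prop2.2-1511}). Finally, $f^\vee=f^\wedge=1$ at every point of each open shell $E_k$ and $f^\vee=f^\wedge=0$ at every point of each open gap $\{b_k<d(O,\cdot)<a_{k+1}\}$, which is nonempty since $b_k<2^{j_k+1}<2^{j_{k+1}}<a_{k+1}$; combined with the first-paragraph observation, every infinite curve $\gamma$ satisfies $f^\vee(\gamma(t_k))=f^\wedge(\gamma(t_k))=1$ and $f^\vee(\gamma(s_k))=f^\wedge(\gamma(s_k))=0$ with $t_k,s_k\to+\infty$, so both limits in \eqref{eq4.1-1411} fail. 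The routine parts are (2) and (3); the real work is in (1), where the crux is to make the shells $E_k$ simultaneously cheap in perimeter --- which is exactly where $\mathcal R_1=+\infty$ is used, via the thinness $\mu(A_{j_k})2^{-j_k}\to 0$ and the coarea selection of good radii --- and cheap in measure, so that $f$ lands in ${\rm BV}(X)$ and not merely in $\dot{\rm BV}(X)$; the second is not automatic because the mass of $A_{j_k}$ could be concentrated, and it is handled by the one-sided continuity of $R\mapsto\mu(B(O,R))$. Once $f\in{\rm BV}(X)\setminus\dot N^{1,1}(X)$ is in hand, failure of the limits is immediate from the pointwise values of $f^\vee,f^\wedge$ on the open shells and gaps together with the intermediate value theorem.
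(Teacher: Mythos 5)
Your construction is correct, and for parts (2) and (3) it essentially coincides with the paper's: radial Lipschitz bumps supported in thin dyadic annuli for \eqref{eq4.2-1411}, and an increasing radial staircase with summable gradient mass together with $\mu(X)=+\infty$ (from connectedness and relative volume decay) for \eqref{eq5.2-revise}, where in both cases ``the limit fails'' because the function is forced by the intermediate value theorem applied to $t\mapsto d(O,\gamma(t))$ to oscillate (or to be unbounded) at arbitrarily large times. The genuine divergence is in part (1), where the crux is to produce, inside each thin annulus $A_{j_k}$, a shell whose indicator is simultaneously cheap in perimeter and in measure. You achieve this via the coarea inequality for the $1$-Lipschitz function $d(O,\cdot)$ plus a Chebyshev selection of good radii with $P(B(O,t),X)\lesssim \mu(A_{j_k})2^{-j_k}$, taking $a_k$ a density point of the good set with $\mu(S(O,a_k))=0$ and shrinking $b_k-a_k$ to control $\mu(E_k)$ --- and you correctly flag that the measure bound is not automatic. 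The paper instead runs an explicit dyadic bisection of sub-annuli with controlled measure-to-width ratio, converging to two spheres, and realizes the shell indicator $h_j$ as an $L^1$-limit of Lipschitz functions with $\int_X g_{h_{j,k}}\,d\mu\le 6\cdot 2^{-j}$; this makes $\|Dh_j\|(X)$ and $\mu({\rm spt}\,h_j)$ small by construction, at the cost of first passing to a biLipschitz-equivalent geodesic metric so that \emph{every} sphere is $\mu$-null (via \cite{Bu99}). Your route avoids that reduction (only countably many spheres can fail to be null, and you dodge them), but it imports the coarea formula and the locality and subadditivity of perimeter, which are standard (available in \cite{Mi03}) though not stated in the paper's preliminaries; to be precise you should run the coarea bound on the open annulus and observe that $\partial^*B(O,t)\subseteq S(O,t)$ lies inside it, so the perimeter relative to the annulus agrees with the perimeter in $X$ for a.e.\ good $t$. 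Your argument that $f\notin\dot N^{1,1}(X)$ --- via a strictly positive jump part, using connectedness to force $P(E_1,X)>0$ and hence $\mathcal H(\partial^*E_1)>0$, or equivalently via Proposition \ref{prop2.2-1511} --- is sound and plays the same role as the jumps of the paper's $h_j$ across its two bounding spheres. Both proofs exploit $\mathcal R_1=+\infty$ in the same way: it supplies annuli thin enough that radial indicator-type functions cost arbitrarily little in ${\rm BV}$.
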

	\begin{proof}\ 
		
		Firstly, we note that $X$ is \emph{quasiconvex}, meaning that for every
		pair of points $x,y\in X$ there is a curve $\gamma:[0,\ell_{\gamma}]\to X$ with $\gamma(0)=x$,
		$\gamma(\ell_{\gamma})=y$, and $\ell_{\gamma}\le Cd(x,y)$,
		where $C$ is a constant that
		only depends on the doubling constant of the measure and the constants
		in the Poincar\'e inequality, see e.g. \cite[Theorem 4.32]{BB15}.
		Thus a biLipschitz change in the metric gives a geodesic space
		(see \cite[Section 4.7]{BB15}).
		Since the theorem
		is easily seen to be invariant under such a biLipschitz
		change in the metric, we can assume that $X$ is geodesic.
		Then from \cite{Bu99} we know that every sphere
		$\overline{B}(x,r)\setminus B(x,r)$ has zero $\mu$-measure.
		
		By the assumption that 
		\[
		\mathcal R_1=\sup_{j\in\mathbb N} \frac{2^j}{\mu(B(O,2^{j+1})\setminus(B(O,2^j))}=+\infty,
		\]
		we have
		\[
		\inf_{i\in\mathbb N}  \frac{\mu(B(O,2^i))}{2^i}=0.
		\]
		Using also doubling, there is a sequence $\{i_j\}_{j\in\mathbb N}$ so that for each   $j\in\mathbb N$,
		\begin{equation}\label{5.3-14thMarch}
			\frac{\mu(B(O,2^{i_j+2}))}{2^{i_j}} \leq \frac{1}{2^j} {\rm \ \  and \ so \ \ } \sum_{j\in\mathbb N}    \frac{\mu(B(O,2^{i_j+2}))}{2^{i_j}} \leq \sum_{j\in\mathbb N}\frac{1}{2^j}<+\infty.
		\end{equation}
		%   and so the doubling properties give that
		%    \begin{equation}\label{eq4.3-1311}
		%   \sum_{j\in\mathbb N}  \left( \frac{\mu(B(O,2^{i_j}))}{2^{i_j}} +  \frac{\mu(B(O,2^{i_j+1}))}{2^{i_j+1}} +  \frac{\mu(B(O,2^{i_j+2}))}{2^{i_j+2}}  \right)<+\infty.
		%  \end{equation}
		%  Here by taking a subsequence if necessary, we may assume that $\{B(O,2^{i_j+3})\setminus B(O,2^{i_j-1}) \}_{j\in\mathbb N}$ is a subsequence of pairwise disjoint annulus.
		
		%   Let $j\in\mathbb N$.
		%We now  construct a function $f_{j}\in {\rm BV}(X)\setminus \dot N^{1,1}(X)$ so that  $f_{j}$ is increasing from $0$ to $1$ on $B(O,2^{i_j+1})\setminus B(O,2^{i_j})$,   $f_{j}$ is decreasing from $1$ to $0$ on $B(O,2^{i_j+3})\setminus B(O,2^{i_j+2})$ and
		%\begin{equation}\label{eq4.2-1311}
		%f_{j}\equiv 1 {\rm \ \ on\ \ } B(O,2^{i_j+2})\setminus B(O,2^{i_j+1}) {\rm \ \ and \ \ }f_j\equiv 0\ \  {\rm on}\ \  X\setminus (B(O,2^{i_j+3})\setminus B(O,2^{i_j}))
		%\end{equation}
		%and
		%\begin{equation}\label{eq4.1-1311}
		%\int_{B(O,2^{i_j+1})\setminus B(O,2^{i_j})} |f_j| d\mu \lesssim  \frac{\mu(B(O,2^{i_j}))}{2^{i_j}} +  \frac{\mu(B(O,2^{i_j+1}))}{2^{i_j+1}} +  \frac{\mu(B(O,2^{i_j+2}))}{2^{i_j+2}}
		%\end{equation}
		%and
		%\begin{equation}\label{eq4.4-1411}
		%\| Df_{j}\| (B(O,2^{i_j+3})\setminus B(O,2^{i_j})) \lesssim  \frac{\mu(B(O,2^{i_j}))}{2^{i_j}} +  \frac{\mu(B(O,2^{i_j+1}))}{2^{i_j+1}} +  \frac{\mu(B(O,2^{i_j+2}))}{2^{i_j+2}}
		%\end{equation}
		
		%    Now, we construct $f_j$ as detailed below.    
		We will denote by $\overline{A}(r,\varepsilon)$ the annulus
		$\overline{B}(O,r+\varepsilon)\setminus B(O,r)$ for $r>0, \varepsilon>0$. 
		%   We divide $A(2^{i_j},{ 2^{i_j}})=B(O,2^{i_j+1})\setminus B(O,2^{i_j})$ into $(2^{i_j})$ annuli, denoted $A(2^{i_j}+m-1, 1):=B(O,a_m+1)\setminus B(O,a_{m})$ where $m=\overline{1,(2^{i_j})}$ and $a_m=2^{i_j}+(m-1)$.
		Then for each $j\in\mathbb N$, we find an annulus $\overline{A}(a_{j,0},1)=\overline{B}(O,a_{j,0} + 1)\setminus B(O,a_{j,0})$
		with $a_{j,0}\in \{2^{i_j},\ldots,2^{i_j+1}-1\}$, so that  
		\begin{equation}\label{5.4-21March}
			{\mu(\overline{A}(a_{j,0},1) )}\leq \frac{\mu(\overline{A}(2^{i_j},2^{i_j}))}{2^{i_j}}.
		\end{equation}
		We will construct  a function $f\in {\rm BV}(X)\setminus \dot N^{1,1}(X)$ with $0\leq f\leq 1$ and its support in $\cup_{j\in\mathbb N}\overline{A}(a_{j,0},1)$ satisfying the claim \eqref{eq4.1-1411}. 
		Combining \eqref{5.3-14thMarch} and \eqref{5.4-21March}, we get $\mu( \cup_{j\in\mathbb N}\overline{A}(a_{j,0},1))<\infty$ and so $f\in L^1(X)$ for every function $0\leq f\leq 1$ with its support in $\cup_{j\in\mathbb N}\overline{A}(a_{j,0},1)$. Hence we only need to construct a function $f$ satisfying \eqref{eq4.1-1411} so that 
		\begin{equation}\label{5.8-15thMarch}
			0\leq f\leq 1,\, {\rm spt}(f)\subseteq \cup_{j\in\mathbb N}\overline{A}(a_{j,0},1){\rm \ \ and\ \ }\| Df\|(\overline{A}(a_{j,0},1))\leq \frac{6}{2^j} {\rm \ \ for \ each \ } j\in\mathbb N
		\end{equation}
		where ${\rm spt}(f)$ is the support of $f$.
		Now, we fix $j\in \mathbb N$ so that \eqref{5.4-21March} holds. Let $b_{j,0}:=a_{j,0}+1/3$ and $ c_{j,0}=b_{j,0}+1/3$.
		%, and so $a_{j,0}<b_{j,0}<c_{j,0}<a_{j,0}+1$.
		Then \eqref{5.3-14thMarch}-\eqref{5.4-21March} gives that
		\[
		\mu(\overline{A}(a_{j,0},1/3))\leq   \mu(\overline{A}(a_{j,0},1))\leq \frac{1}{2^j}
		{\rm\ \ and\ \ }   \frac{ \mu(\overline{A}(a_{j,0},1/3))}{1/3}\leq \frac{ \mu(\overline{A}(a_{j,0},1))}{1/3}\leq \frac{3}{2^j} %{\rm \ \ for\ each\ }j\in\mathbb N
		\]
		and similarly
		\[
		\mu(\overline{A}(c_{j,0},1/3))\leq   \mu(\overline{A}(a_{j,0},1))\leq \frac{1}{2^j}
		{\rm\ \ and\ \ }   \frac{ \mu(\overline{A}(c_{j,0},1/3))}{1/3}\leq \frac{ \mu(\overline{A}(a_{j,0},1))}{1/3}\leq \frac{3}{2^j}. %{\rm \ \ for\ each\ }j\in\mathbb N.
		\]
		Considering two annuli $\overline{A}(a_{j,0},1/3)$ and  $\overline{A}(c_{j,0},1/3)$, we divide each of them into two smaller annuli $\overline{A}(a_{j,0},2^{-1}/3), \overline{A}(a_{j,0}+2^{-1}/3,2^{-1}/3)$,    $\overline{A}(c_{j,0},2^{-1}/3), \overline{A}(c_{j,0}+2^{-1}/3,2^{-1}/3)$. Then there are at least two annuli, denoted by $\overline{A}(a_{j,1},2^{-1}/3)$ and $\overline{A}(c_{j,1},2^{-1}/3)$, so that
		\[
		\overline{A}(a_{j,1},2^{-1}/3)\subset \overline{A}(a_{j,0},1/3){\rm \ \ and\ \ }
		\frac{\mu(\overline{A}(a_{j,1},2^{-1}/3))}{2^{-1}/3}\leq \frac{ \mu(\overline{A}(a_{j,0},1/3))}{1/3}\leq\frac{3}{2^j}
		\]
		and 
		\[
		\overline{A}(c_{j,1},2^{-1}/3)\subset \overline{A}(c_{j,0},1/3){\rm \ \ and\ \ }
		\frac{\mu(\overline{A}(c_{j,1},2^{-1}/3))}{2^{-1}/3}\leq \frac{ \mu(\overline{A}(c_{j,0},1/3))}{1/3}\leq\frac{3}{2^j}
		\]
		here $a_{j,1}\in \{a_{j,0}, a_{j,0}+2^{-1}/3\}$ and $c_{j,1}\in \{c_{j,0}, c_{j,0}+2^{-1}/3\}$. Similarly dividing $\overline{A}(a_{j,1},2^{-1}/3)$ and $\overline{A}(c_{j,1},2^{-1}/3)$ each into two smaller annuli, we obtain by induction that there are two sequences $\{ \overline{A}(a_{j,k},2^{-k}/3)\}_{k\in\mathbb N}$ and $\{ \overline{A}(c_{j,k},2^{-k}/3)\}_{k\in\mathbb N}$ so that for each $k$,
		\[
		\begin{cases}
			\overline{A}(a_{j,k+1},2^{-(k+1)}/3)\subset  \overline{A}(a_{j,k},2^{-k}/3)\subset \overline{A}(a_{j,0},1/3),\\
			\frac{\mu(\overline{A}(a_{j,k+1},2^{-(k+1)}/3))}{2^{-(k+1)}/3}\leq \frac{\mu(\overline{A}(a_{j,k},2^{-k}/3))}{2^{-k}/3}\leq \frac{ \mu(\overline{A}(a_{j,0},1/3))}{1/3}\leq\frac{3}{2^j}
		\end{cases}
		\]
		and 
		\[
		\begin{cases}
			\overline{A}(c_{j,k+1},2^{-(k+1)}/3)\subset  \overline{A}(c_{j,k},2^{-k}/3)\subset \overline{A}(c_{j,0},1/3),\\
			\frac{\mu(\overline{A}(c_{j,k+1},2^{-(k+1)}/3))}{2^{-(k+1)}/3}\leq \frac{\mu(\overline{A}(c_{j,k},2^{-k}/3))}{2^{-k}/3}\leq \frac{ \mu(\overline{A}(c_{j,0},1/3))}{1/3}\leq\frac{3}{2^j}.
		\end{cases}
		\]
		Moreover there are $a_j<c_j$ so that $\lim_{k\to+\infty}a_{j,k}=a_j$ and $\lim_{k\to+\infty}c_{j,k}=c_j$.
		The two above claims give that there are two sequences $\{u_{j,k}\}_{k\in\mathbb N}$ and $\{v_{j,k}\}_{k\in\mathbb N}$ of Lipschitz functions so that 
		\begin{equation}\label{5.11-15thMarch}
			u_{j,k}=1 {\rm \ on\ }B(O,a_{j,k}),\ u_{j,k}=0 {\rm \ on\ }X\setminus B(O,a_{j,k}+2^{-k}/3), \ 
			\int_X{g_{u_{j,k}}}d\mu\leq \frac{3}{2^j}
		\end{equation}
		and
		\begin{equation}\label{5.12-15thMarch}
			v_{j,k}=1 {\rm \ on\ }B(O,c_{j,k}),\ v_{j,k}=0 {\rm \ on\ }X\setminus B(O,c_{j,k}+2^{-k}/3),\ 
			\int_X{g_{v_{j,k}}}d\mu\leq \frac{3}{2^j}.
		\end{equation}
		Let 
		\begin{equation}\label{eq5.12-15March}
			h_{j,k}=(1-u_{j,k})\chi_{\overline{A}(a_{j,0},1/3)}+\chi_{\overline{A}(b_{j,0},1/3)}+v_{j,k}\chi_{\overline{A}(c_{j,0},1/3)}.
		\end{equation}
		%Notice that letting $k\to+\infty$, there are $b_0$ and $d_0$  so that $b_0=\lim_{k\to+\infty}(a_{b_j,k}+(\varepsilon_j)^k)$ and $d_0=\lim_{k\to+\infty}(a_{d_j,k}+(\varepsilon_j)^k)$. 
		It follows from \eqref{5.11-15thMarch}-\eqref{5.12-15thMarch} that the sequence $h_{j,k}$ converges
		in $L^1(X)$ to some function $h_j\in \dot{\rm BV}(X)$ as $k\to+\infty$ so that 
		\begin{equation}\label{5.13-15thMarch}
			\text{\rm $h_j=1$ in $B(O,c_j)\setminus \overline{B}(O,a_j)$, \  $h_j$ has jumps on $\{x\in X: d(O,x)=a_j\}\cup\{x\in X: d(O,x)=c_j\}$},
		\end{equation}
		and 
		\begin{equation}\label{5.14-15thMarch}
			\text{\rm ${\rm spt}(h_j)\subseteq \overline{B}(O,c_j)\setminus B(O,a_j)\subseteq \overline{A}(a_{j,0},1)$ \ \ and\ \ $ \|Dh_j\|(\overline{A}(a_{j,0},1))\le \frac{6}{2^j}.$}
		\end{equation}
		Let $f=\sum_{j\in\mathbb N}h_j\chi_{\overline{A}(a_{j,0},1)}$. By \eqref{5.13-15thMarch}-\eqref{5.14-15thMarch}, we obtain  that
		\[
		\lim_{t\to+\infty}f^\vee(\gamma(t)), \lim_{t\to+\infty}f^\wedge(\gamma(t)) \text{\rm\ \ fail to exist for all infinite curves $\gamma$}
		\]
		and that
		$0\leq f\leq 1, {\rm spt}(f)\subseteq \cup_{j\in\mathbb N}\overline{A}(a_{j,0},1)$ and $\|Df\|(\overline{A}(a_{j,0},1))\le \frac{6}{2^j}$ for each $j\in\mathbb N$ which is \eqref{5.8-15thMarch}, and so the first  claim follows.
		
		For the second  claim, we let $f=\sum_{j\in\mathbb N}h_{j,k}\chi_{\overline{A}(a_{j,0},1)}$ for a fixed $k$ and where $h_{j,k}$ are defined by \eqref{eq5.12-15March}. One can check from \eqref{5.11-15thMarch}-\eqref{5.12-15thMarch}-\eqref{eq5.12-15March} that this function $f\in N^{1,1}(X)$ satisfies \eqref{eq4.2-1411}. 
		
		Finally,
		% in the case $\mu(X)=\infty$, 
		by letting
		\[
		f(x)=\sum_{j\in\mathbb N}(1-u_{j,k}){\rm \ \ for \ a \ fixed\ }k,
		\]
		%$f(x):=\inf_{\gamma}\int_{\gamma} gds$ where the infimum is taken over all rectifiable curves $\gamma$ connecting $O$ and $x$, and where 
		%\[
		%g:=\sum_{j\in\mathbb N} g_{u_{j,k}}\chi_{B(O,a_{j,k}+2^{-k}/3)\setminus B(O,a_{j,k})} {\rm \ \ for \ a \ fixed\ }k,
		%\]
		we obtain from \eqref{5.11-15thMarch} that $g:=\sum_{j\in\mathbb N} g_{u_{j,k}}$ is an integrable
		upper gradient of $f$ on $X$ and $f\to +\infty$ as $d(O,x)\to+\infty$. 
		Notice that $X$ is connected since it supports a $1$-Poincar\'e inequality, see \cite[Proposition 4.2]{BB15}, and hence by \cite[Corollary 3.8]{BB15} we obtain a relative upper volume decay, i.e. there are constants $C>0, s>0$ such that for all $0<r<R<\tfrac 12 \textrm{diam} X, x\in X$, 
		\[
		\frac{\mu(B(x,r))}{\mu(B(x,R))}\leq C \left(\frac{r}{R} \right)^s.
		\]
		In particular, in an unbounded space, we obtain that $\mu(X)=+\infty$ because 
		\[
		\mu(X)\geq \mu(B(O,R)) \geq \frac{1}{C} \frac{\mu(B(O,r))}{r^s} R^s  \to +\infty  {\rm \ \ as\ \ }R\to+\infty
		\]
		for some fixed $r>0$. % so that $\mu(B(O,r))>0$ where $r$ exists by the assumption that $\mu(X)>0$.
		Combining this with $f(x)\to+\infty$ as $d(O,x)\to+\infty$, we see that the function $f$ is not integrable.
		Therefore, \eqref{eq5.2-revise}
		holds true and so the third claim follows.
		
		The proof completes.
	\end{proof}
	\begin{proof}
		[Proof of Theorem \ref{thm1.2}]

		\ 
		
		$(1)\Longrightarrow (4)$: By \cite{KN23}, we obtain from $f\in\dot N^{1,1}(X)$ that $\lim_{t\to+\infty}f(\gamma(t))=c_f$ for $1$-a.e. $\gamma\in\Gamma^{+\infty}$ and for some constant $c_f\in\mathbb R$. By Proof of Corollary \ref{thm1.1}, the condition $f\in L^1(X)$ gives that $c_f=0$. Hence the assumption of ${\rm Mod}_1(\Gamma^{+\infty})>0$ implies the claim.
		
		$(1)\Longrightarrow \{ (2),(3),(5), (6)\}$ is obtained from Corollary \ref{thm1.1}. 
		
		$(1)\Longrightarrow (7)$  is obtained because \eqref{thm1.1-0711} and the assumption of ${\rm Mod}_1(\Gamma^{+\infty})>0$.
		
		$\{ (2),(3),(4),(5),(6),(7)\}\Longrightarrow(1)$ is given by Lemma \ref{lem4.1-1809} and since the claim that ${\rm Mod}_1(\Gamma^{+\infty}) \Longleftrightarrow \mathcal R_1<+\infty$ holds true by \cite{KN23}.
	\end{proof}

	\section{Examples}
	\begin{example}\label{ex3.3-0912}
		There exists an unbounded metric measure space $(X,d,\mu)$ so that the following statements hold true:
		\begin{enumerate}
			\item $\mu$ is not a doubling measure as defined in \eqref{eq2.4-0912} on $(X,d)$.
			\item Our space $(X,d,\mu)$ does not support a $1$-Poincar\'e inequality as defined in \eqref{eq2.5-2711}.
			\item There are an infinite curve $\gamma$, a function $h\in\dot {\rm BV}(X)$ and a function $f\in \dot{\rm BV}(X)$ such that
			\[
			{\rm AM}(\{\gamma\})\geq 1
			\]
			and
			\begin{equation}\label{equ3.10-0912}
				\begin{cases}
					\lim_{t\to+\infty}h^\vee(\gamma(t)) \text{\rm \ \ fails to exist,}\\
					h^\wedge(\gamma(t))\equiv 0 {\rm \ \ for\ all}\  t>0,\\
					\lim_{t\to+\infty}h^*(\gamma(t))\text{\rm \ \ exists.}
				\end{cases}
				{\rm \ \ and\ \ }
				\begin{cases}
					\lim_{t\to+\infty}f^\vee(\gamma(t)) \text{\rm \ \ fails to exist,}\\
					\lim_{t\to+\infty} f^\wedge(\gamma(t)) \text{\rm \ \ fails to exist,}\\
					\lim_{t\to+\infty}f^*(\gamma(t))\text{\rm \ \ exists.}
				\end{cases}
			\end{equation}

		\end{enumerate}
		
	\end{example}
	\begin{proof}
		\ 
		
		We first construct our space in $\mathbb R^2$. Let $X$ be a set defined by  $X:= X_1\cup X_2\cup X_3$ where
		\[
		X_1:= \{ (x,y)\in \mathbb R^2: y=1, x\ge 0\}, \ \ X_2:= \{ (x,y)\in \mathbb R^2: y=0, x\ge 0\}
		\]
		and
		\[
		X_3:=\{(x,y)\in\mathbb R^2: x\in\mathbb N, 0\leq y\leq 1\}.
		\]
		Roughly speaking, the space $X$ is a ladder with infinite steps. We equip $X$ with
		the length distance $d$ induced by the $2$-Euclidean distance and we equip $X$ with a measure $\mu$ defined by setting 
		\[
		\mu=\mathcal L^1|_{X_1\cup X_2}+\sum_{k\in\mathbb N}2^{-k}\mathcal L^1|_{I_k}
		\]
		where $\mathcal L^1$ is the $1$-Lebesgue measure and $I_k:= \{(x,y)\in X_3: x=k, 0\leq y\leq 1\}$. Here $\mathcal L^1|_A$ means the $1$-Lebesgue measure restricted to $A$.
		
		We first show that $\mu$ is not a doubling measure on $(X,d)$. For each $k\in\mathbb N$, we pick a point ${\rm x_k}:=(k,1/2)\in I_k$. Then it is easy to see that 
		\begin{equation}\notag
			%\label{eq3.12-1612}
			\mu(B( {\rm x_k}, 1/2))= 2^{-k} \text{\rm \ \ and \ \ }\mu(B({\rm x_k}, 1))=2^{-k}+2
		\end{equation}
		for $k\in \mathbb N$.
		It follows that $\mu$ is not doubling because 
		\begin{equation}
			\notag
			\lim_{k\to+\infty} \frac{\mu(B({\rm x_k},1))}{\mu(B({\rm x_k},1/2))}= \lim_{k\to+\infty} \frac{2^{-k}+2}{2^{-k}}=+\infty.
		\end{equation}
		
		Now we show that our space does not support a $1$-Poincar\'e inequality. For each $k\in\mathbb N$, we consider again the point ${\rm x_k}:=(k,1/2)\in I_k$. It is clear that there is a $1$-Lipschitz function $f$ on $X$ so that 
		\[
		f\equiv 1 \text{\rm \ \ on $X_1$}, \ \ f\equiv 0 \text{\rm \ \ on $X_2$}, \ \ {\rm and \ \ }|\nabla f|=1 \text{\rm \ \ on $I_k$ for each $k\in\mathbb N$}.
		\]
		We now consider the sequence of balls $B_k:=B({\rm x}_k,1)$ for $k\in\mathbb N$.
		%Since $f\equiv 1$  on $X_1$ and $f\equiv 0$ on  $X_2$, we have from $f_{B_k}$ is a constant independent of $(x,y)$ that 
		%\[
		%|f(x,y)-f_{B_k}|\geq \frac{1}{2} \text{\rm \ \ for $(x,y)\in B_k\cap X_1 $}
		%\]
		%or
		%\[
		%|f(x,y)-f_{B_k}|\geq \frac{1}{2} \text{\rm \ \ for $(x,y)\in B_k\cap X_2$}
		%\]
		We see that $f_{B_k}=1/2$ for all $k\in\mathbb N$.
		Combining this with $\mu(B_k\cap X_1)= \mu(B_k\cap X_2)=1$ and $\mu(B_k)=2+2^{-k}$ for $k\in\mathbb N$, we obtain that
		\begin{equation}\label{eq3.10-0912}
			\dashint_{B_k}|f-f_{B_k}|d\mu
			\geq  \frac{1}{\mu(B_k)}  \left(\int_{B_k\cap X_1}|f-f_{B_k}|d\mu + \int_{B_k\cap X_2}|f-f_{B_k}|d\mu\right) 
			= \frac{1}{2+2^{-k}}
		\end{equation}
		for $k\in\mathbb N$.
		Moreover, we have that  for all given $\lambda\geq 1$ and for $k\in\mathbb N$,
		\begin{align}\label{eq3.11-0912}
			\dashint_{\lambda B_k}|\nabla f|d\mu= \frac{1}{\mu(\lambda B_k)}\sum_{j: I_j\subset\lambda B_k}\int_{I_j} d\mu \leq \frac{1}{\mu(B_k)} \mu(\lambda B_k\cap X_3)=\frac{\mu(\lambda B_k\cap X_3)}{2+2^{-k}}.
		\end{align}
		To obtain a $1$-Poincar\'e inequality \eqref{eq2.5-2711}, we need from \eqref{eq3.10-0912}-\eqref{eq3.11-0912} that there is a constant $C>0$ so that
		\[
		\frac{1}{2+2^{-k}}\leq \dashint_{B_k}|f-f_{B_k}|d\mu\leq C \dashint_{\lambda B_k}|\nabla f|d\mu \leq C \frac{\mu(\lambda B_k\cap X_3)}{2+2^{-k}}
		\]
		for all  $k\in\mathbb N$ and for some given $\lambda\geq 1$. Notice that for each given $\lambda\geq 1$ the right-hand side of this inequality converges to $0$ as $k\to+\infty$ because $\mu(X_3)<+\infty$. Letting $k\to+\infty$, we conclude that the above inequality fails and so our space does not support a $1$-Poincar\'e inequality.
		
		Finally, we show the third claim.  Let $\gamma:[0,+\infty)\to X$ be the infinite curve defined by $\gamma(t)=(t,0)$ for each $t\in[0,+\infty)$. Then $\gamma\subset X_2$. Let $\{g_i\}_{i\in\mathbb N}$ be an arbitrary admissible function for computing ${\rm AM}(\{ \gamma\})$.
		%Since $X$ is locally $1$-dimensional, it follows that
		We have
		\begin{equation}\label{eq3.16-1612}
			1\leq \liminf_{i\to+\infty}\int_\gamma g_ids \leq \liminf_{i\to+\infty}\int_{X_2} g_id\mathcal L^1 \leq \liminf_{i\to+\infty} \|g_{i}\|_{L^1(X,\mu)}.
		\end{equation}
		As $g_i$ is arbitrary, we obtain that ${\rm AM}(\{\gamma\})\geq 1$.  We only need  to prove that there exist a function $h\in \dot{\rm BV}(X)$ and a function $f\in \dot{\rm BV}(X)$ so that the claim \eqref{equ3.10-0912} holds. We define $h$ by setting $$h=\chi_{X_1}+\sum_{k\in\mathbb N}\chi_{I_k}.$$ Then $h^\wedge(\gamma(t))\equiv 0$ for all $t>0$,
		$
		h^\vee(\gamma(k))=1
		$
		for each $k\in\mathbb N$,
		and 
		$
		h^\vee(\gamma(s))=0
		$
		for all $s\in\mathbb R_+\setminus \mathbb N$.
		Hence $\lim_{t\to+\infty}h^\vee(\gamma(t))$ fails to exist. It's clear that $h^*(\gamma(t))\equiv 0$ for all $t\in\mathbb R_{+}\setminus N$ and that 
		\[
		h^*(\gamma(k))=\lim_{r\to0}\dashint_{B((k,0),r)}hd\mu=\lim_{r\to0}\frac{2^{-k}r}{2^{-k}r+2r}=\frac{2^{-k}}{2^{-k}+2}
		\]
		and so $\lim_{t\to+\infty}h^*(\gamma(t))$ exists and is equal to $0$. Thus, $h$ satisfies \eqref{equ3.10-0912},
		we now prove that $h\in\dot{\rm BV}(X)$. Let $h_i$ be a sequence of functions defined by 
		\[
		h_i((x,y))=
		\begin{cases}
			0& {\rm if\ }(x,y)\in X_2,\\
			i d((x,y), X_2) & {\rm if\ }(x,y)\in X_3, 0\leq y\leq \frac{1}{i},\\
			1& {\rm otherwise \ }.
		\end{cases}
		\]
		Then $h_i\to h$ in $L^1_{\rm loc}(X)$ as $i\to+\infty$, and $|\nabla h_i|=i \chi_{ \{(x,y)\in X_3:0\leq y\leq \frac{1}{i} \}}$. Moreover, by a simple computation, we have that
		\[
		\int_{X}|\nabla h_i|d\mu= \sum_{k\in\mathbb N} \int_{I_k} i \chi_{\{x\in I_k: 0\leq y\leq \frac{1}{i} \}} 2^{-k}d\mathcal L^1= \sum_{k\in\mathbb N}2^{-k} =1.
		\]
		It follows from the definition of bounded variation that $h\in \dot {\rm BV}(X)$. Let 
		\[
		f=\chi_{X_1}+\sum_{k\in\mathbb N} \left(\chi_{I_{2k}}-\chi_{I_{2k+1}}\right).
		\]
		By the same argument as above to function $h$, one can verify that $f\in\dot{\rm BV}(X)$ satisfies \eqref{equ3.10-0912}.
		The proof completes.
	\end{proof}

	\begin{example}\label{ex3.4-1612}
		There exists an unbounded metric measure space $(X,d,\mu)$ so that the following statements hold true:
		\begin{enumerate}
			\item $\mu$ is not a doubling measure as defined in \eqref{eq2.4-0912} on $(X,d)$.
			\item Our space $(X,d,\mu)$ does not support a $1$-Poincar\'e inequality as defined in \eqref{eq2.5-2711}.
			\item There are an infinite curve $\gamma$, a function $h\in\dot{\rm BV}(X)$ and a function $f\in\dot{\rm BV}(X)$ such that 
			\[
			{\rm AM}(\{\gamma\})\geq 1
			\]
			and 
			
			\begin{equation}\label{equ3.18-1912}
				\begin{cases}
					\lim_{t\to+\infty}h^\vee(\gamma(t)) \text{\rm \ \ fails to exist,}\\
					h^\wedge(\gamma(t))\equiv 0 {\rm \ \ for\ all}\  t>0,\\
					\lim_{t\to+\infty}h^*(\gamma(t))\text{\rm \ \ fails to exist.}
				\end{cases}
				{\rm \ \ and\ \ }
				\begin{cases}
					\lim_{t\to+\infty}f^\vee(\gamma(t)) \text{\rm \ \ fails to exist,}\\
					\lim_{t\to+\infty} f^\wedge(\gamma(t)) \text{\rm \ \ fails to exist,}\\
					\lim_{t\to+\infty}f^*(\gamma(t))\text{\rm \ \ fails to exist.}
				\end{cases}
			\end{equation} 
		\end{enumerate}
	\end{example}
	\begin{proof}\ 
		
		We construct our space in $\mathbb R^2$ as follows. Let $N_k:=(2^k,1)$ and $S_k:=(2^k,0)$ where $k\in\mathbb N$. We now define a curve $\gamma_k$ connecting $N_k,S_k$. We pick a sequence of points $\{N_{k,i}\}_{i\in\mathbb N}$ defined by $N_{k,i}=(2^k, 2^{-(i-1)})$ for $k,i\in\mathbb N$. Then the 2-dimensional Euclidean distance between $N_{k,i}$ and $S_k$ is $2^{-(i-1)}$ and $N_{k,i}$ belongs to the segment $[N_k,S_k]$ for each $k,i$. For each $i\in\mathbb N$ and $k\in\mathbb N$, in the annulus $A_{k,i}:=B(S_k, 2^{-(i-1)})\setminus B(S_k,2^{-i})$ there is a bijective  rectifiable curve parameterized by arc-length $\gamma_{k,i}:[0,2^k2^{-i}]\to \mathbb R^2$, denoted $\gamma_{k,i}(t)=(\gamma_{k,i}^1(t), \gamma_{k,i}^2(t))\in\mathbb R^2$, so that  
		\[
		2^{-i}\leq \gamma_{k,i}^2(t)\leq 2^{-(i-1)},\ \ \gamma_{k,i}\subset A_{k,i},\ \ \gamma_{k,i}(0)=N_{k,i}, \ \ \gamma_{k,i}(2^k2^{-i})=N_{k,i+1}
		\]
		and
		\begin{equation}\label{equ3.19-1912}
			% \int_{\gamma_{k,i}} dt:=
			\int_{\gamma_{k,i}} ds=2^{k} 2^{-i}
		\end{equation}
		where the line integral over $\gamma_{k,i}$ is induced by the $2$-dimensional Euclidean distance. %Notice that the existence of $\gamma_{k,i}$ holds since every $1$-dimensional curve with ``long" length is embedded into a given $2$-dimensional bounded domain with ``small" size. 
		Roughly speaking, $\gamma_{k,i}$ is a  ``long curve" in the annulus $A_{k,i}$ moving down from the northern point $N_{k,i}$ of ball $B(S_k, 2^{-(i-1)})$ to the northern point $N_{k,i+1}$ of the ball $B(S_k,2^{-i})$. We set
		\begin{equation}\label{equ3.20-1912}
			\gamma_{k}:= \tilde \gamma_{k,1}\cup \left(\cup_{i=2}^{+\infty}\gamma_{k,i}\right)
		\end{equation}
		where $\tilde \gamma_{k,1}$ is the segment $[N_{k,1},N_{k,2}]$. Here $N_{k,1}=N_k$.
		Then $\gamma_{k}$ is a curve connecting $N_k$ and $S_k$ with  length comparable to $2^k$. We define our space $X$ by $X:=X_1\cup X_2\cup X_3$ where 
		\[
		X_1:= \{ (x,y)\in \mathbb R^2: y=1, x\ge 0\}, \ \ X_2:= \{ (x,y)\in \mathbb R^2: y=0, x\ge 0\},
		\]
		and
		\[
		X_3:=\cup_{k\in\mathbb N}\bigg(\tilde \gamma_{k,1}\cup \left(\cup_{i=2}^{+\infty}\gamma_{k,i}\right)\bigg).
		\]
		We equip $X$ with the $2$-dimensional Euclidean  distance $d$  and a measure defined by setting
		\begin{equation}\label{equ3.21-1912}
			\mu=\mathcal L^1|_{X_1\cup X_2}+\sum_{k\in\mathbb N} 2^{-k}\left(\mathcal L^1|_{\tilde \gamma_{k,1}} +\sum_{i=2}^{+\infty}\mathcal L^1|_{\gamma_{k,i}}\right).
		\end{equation}
		We then have by \eqref{equ3.19-1912}-\eqref{equ3.20-1912} that
		\begin{equation}\label{equa3.22-1912}
			\mu(\gamma_k \cap B(S_k,2^{-j}))=\sum_{i=j+1}^{+\infty} \int_{\gamma_{k,i}}2^{-k}d\mathcal L^1=\sum_{i=j+1}^{+\infty}2^{-k}2^k 2^{-i} \approx 2^{-j}
		\end{equation}
		for $j\in\mathbb N$.
		Let  ${\rm x}_k:=(2^k, \frac{3}{4})$ be the middle point of the segment $\tilde \gamma_{k,1}$ for each $k\in\mathbb N$. Then 
		\[
		\mu(B({\rm x}_k,1/4))=2^{-(k+1)} {\rm \ \ and \ \ }\mu(B({\rm x}_k,1/2))\geq 1/2
		\]
		for $k\geq 2$.
		It follows that $\mu$ is not doubling because
		\[
		\lim_{k\to+\infty}\frac{\mu(B({\rm x}_k,1/2)) }{\mu(B({\rm x}_k,1/4)) }=+\infty.
		\]

		Let $\gamma:[0,+\infty)\to X$ be the infinite curve defined by $\gamma(t)=(t,0)$ for $t\in[0,+\infty)$. Then $\gamma\subset X_2$. 
		By \eqref{eq3.16-1612}, we have ${\rm AM}(\{\gamma\})\geq 1$. 
		
		We need to show that there are two functions $h\in\dot{\rm BV}(X)$ and $f\in\dot{\rm BV}(X)$ so that \eqref{equ3.18-1912} holds. Let $$h=\chi_{X_1\cup X_3}=\chi_{X_1}+\sum_{k\in\mathbb N}\chi_{\gamma_k}.$$  We have  that
		\begin{equation}\label{3.22-4th4}\notag
			h^*(S_k)=\limsup_{r\to0}\dashint_{B(S_k,r)} hd\mu\geq \limsup_{j\to+\infty}\frac{1}{\mu(B(S_k,2^{-j}))} \int_{B(S_k,2^{-j})}hd\mu=  \limsup_{j\to+\infty}\frac{ \mu(\gamma_{k}\cap B(S_k,2^{-j})) }{\mu(B(S_k,2^{-j}))} 
		\end{equation}
		and
		\begin{equation}\label{3.23-4th4}\notag
			\mu(B(S_k,2^{-j}))= \mu(\gamma_{k}\cap B(S_k,2^{-j})) + \mu (X_2\cap B(S_k,2^{-j})) \approx 2^{-j}\approx  \mu(\gamma_{k}\cap B(S_k,2^{-j}))
		\end{equation}
		by \eqref{equa3.22-1912}. We then obtain that $h^*(S_k)\gtrsim 1$ and so the limit $\lim_{t\to+\infty}h^*(\gamma(t))$ fails to exist because $h^*(\gamma(s))=0$ for each $s\notin \mathbb N$.
		It is also clear that $h^\wedge(\gamma(t))\equiv 0$ for all $t>0$ and that $h^\vee$ fails to exist since $h^\vee(\gamma(2^k))=h^\vee(S_k)=1$ for each $k\in\mathbb N$ and $h^\vee(\gamma(s))=0$ for each $s\notin \mathbb N$.
		
		We now show that $h\in\dot{\rm BV}(X)$. For each $i\in\mathbb N$, we define a sequence of $h_i$ by setting
		\
		\[
		h_i({\rm x}):= 
		\begin{cases}
			0&{\rm \ \ if\ }{\rm x}\in X_2,\\
			\int_{\gamma_{S_k,{\rm x}}}idt& {\rm \ \ if \ }{\rm x}\in \gamma_k: \int_{\gamma_{S_k,{\rm x}}}dt\leq \frac{1}{i} \ {\rm for\ }k\in\mathbb N,\\
			1 & {\rm \ \ otherwise}.
		\end{cases}
		\]
		Here $\gamma_{S_k,{\rm x}}\subset \gamma_k$ means the geodesic curve connecting $S_k$ and ${\rm x}$ for ${\rm x}\in \gamma_k$. Let ${\rm x}_{k,i}\in \gamma_{k}$ be so that
		\begin{equation}\label{eq3.25-1912}
			\int_{\gamma_{S_k,{\rm x}_{k,i}}}dt=\frac{1}{i}.
		\end{equation}
		Then $h_i$ is $i$-Lipschitz on each $\gamma_{S_{k,{\rm x}_{k,i}}}$ and is locally constant outside such curves.  It is clear that $h_i\to h$ in $L^{1}_{\rm loc}(X)$ as $i\to+\infty$. Furthermore, we have from \eqref{eq3.25-1912} and \eqref{equ3.21-1912} that
		\begin{equation}\label{3.25-4th4}
			\int_{X}g_{h_i}d\mu=\sum_{k\in\mathbb N} \int_{\gamma_{S_k,{\rm x}_{k,i}}} i d\mu =\sum_{k\in\mathbb N} i2^{-k}\frac{1}{i}=1.\notag
		\end{equation}
		It follows from the definition of bounded variation functions that $h\in \dot {\rm BV}(X)$. Let
		\[
		f=\chi_{X_1}+\sum_{k\in\mathbb N}\left(\chi_{\gamma_{2k}}-\chi_{\gamma_{2k+1}} \right).
		\]
		By a similar argument as above for the function $h$, one can verify that $f\in\dot{\rm BV}(X)$ satisfies \eqref{equ3.18-1912}.
		
		Finally, we note that our space does not support a $1$-Poincar\'e inequality. For all $\lambda\geq 1$, there is  an index $k_\lambda$ such that for all  $k_\lambda<k\in\mathbb N$, we have that
		the ball $B(S_k, 2\lambda)$ does not meet $\gamma_{k-1}, \gamma_{k+1}$.
		Now considering the balls $B(S_k,2)$ and the function $h$, it is straightforward to show that a
		$1$--Poincar\'e inequality does not hold.
		
		The proof completes.
	\end{proof}
	
	\begin{remark}\label{rem3.5-1612}
		Let $X=X_1\cup X_2\cup X_3$ with metric $d$ and measure $\mu$  defined as in Proof of Example \ref{ex3.3-0912}. Then  the doubling and Poincar\'e inequalities condition fails.  We pick $\gamma_1\equiv X_1$, $\gamma_2\equiv X_2$, and a $2$-Lipschitz function $f$ satisfying that $f\equiv 1$ on $X_1$, $f\equiv 0$ on $X_2$ and
		\[
		f((x,y))=
		\begin{cases}
			1&{\rm \ \ if \ }(x,y)\in I_k, \frac{3}{4}\leq y\leq 1,\\
			2d((x,y),X_2) &{\rm \ \ if \ } (x,y)\in I_k, \frac{1}{4}\leq y\leq \frac{3}{4},\\
			0&{\rm \ \ if \ }(x,y)\in I_k, y\leq \frac{1}{4}
		\end{cases}
		\]
		for $k\in\mathbb N$ where $X_3=\cup_{k\in\mathbb N}I_k$.
		Then one can check that $f\in \dot N^{1,1}(X)$ and 
		\begin{equation}\label{eq3.27-2012}\notag
			{\rm AM}(\{\gamma_1\})\geq 1, \ \ {\rm AM}(\{\gamma_2\})\geq 1
		\end{equation}
		and
		\begin{equation}\label{eq3.28-2012}\notag
			f\equiv f^\vee\equiv f^\wedge\equiv f^*\equiv 1 \text{\rm \ \ on $\gamma_1$} \ \ {\rm and }\ \ f\equiv f^\vee\equiv f^\wedge\equiv f^*\equiv 0 \text{\rm \ \ on $\gamma_2$}.
		\end{equation}
		Since our space has the annular chain condition at $O$ where $O:=(0,1)$, we then obtain that the uniqueness of three limits
		$$\lim_{t\to+\infty}f^\vee(\gamma(t)), \ \ \lim_{t\to+\infty}f^\wedge(\gamma(t)), \lim_{t\to+\infty}f^*(\gamma(t)) \text{\rm \ \ for \ $1$-a.e.}\  \gamma\in\Gamma^{+\infty}$$
		does not hold if the condition of doubling properties and $1$-Poincar\'e inequalities fails. 
		
	\end{remark}
	
	\begin{example}
		\label{ex3.5-8thMar}
		There exists an unbounded metric measure space $(X,d,\mu)$ so that the following statements hold true:
		\begin{enumerate}
			\item $\mu$ is not a doubling measure as defined in \eqref{eq2.4-0912} on $(X,d)$.
			\item Our space $(X,d,\mu)$  supports a $1$-Poincar\'e inequality as defined in \eqref{eq2.5-2711}.
			\item There are an infinite curve $\gamma$, a function $h\in\dot {\rm BV}(X)$ and  a function $f\in\dot {\rm BV}(X)$ such that
			\[
			{\rm AM}(\{\gamma\})\geq 1
			\]
			and
			\begin{equation}\label{3.23-8thMar}
				\begin{cases}
					\lim_{t\to+\infty}h^\vee(\gamma(t)) \text{\rm \ \ fails to exist,}\\
					\lim_{t\to+\infty}h^\wedge(\gamma(t)) \text{\rm \ \ fails to exist,}\\
					h^*(\gamma(t))\equiv 0 {\rm \ \ for \ all \ }t>0,
				\end{cases}
				{\rm \ \ and\ \ }
				\begin{cases}
					\lim_{t\to+\infty}f^\vee(\gamma(t)) \text{\rm \ \ fails to exist,}\\
					f^\wedge(\gamma(t))\equiv 0 {\rm \ \ for \ all \ }t>0,\\
					\lim_{t\to+\infty}f^*(\gamma(t))= 0.
				\end{cases}
			\end{equation} 
			
		\end{enumerate}
	\end{example}
	\begin{proof}\ 
		
		Let $X\subset \mathbb R^2$ be defined by setting $X=X_1\cup X_2$ where
		\[
		X_1=\{(x,y)\in\mathbb R^2: x\in\mathbb N, -1\le y\le 1 \} \ {\rm \ and\ }\ X_2=\{(x,y)\in\mathbb R^2: y=0, x\ge 0 \}.
		\]
		We equip $X$ with the length distance $d$ induced by the $2$-Euclidean distance and a measure $\mu$ defined by 
		\[
		\mu= \sum_{k\in\mathbb N}2^{-k}\mathcal L^1|_{I_k}+\mathcal L^1|_{X_2}
		\]
		where $\mathcal L^1$ is the $1$-Lebesgue measure and $I_k=\{(x,y)\in X_1: x=k, -1\le y\le 1\}$. By the arguments of Example \ref{ex3.3-0912}, we have that our space $(X,d,\mu)$ is not doubling. 
		
		Let $\gamma:[0,+\infty)\to X$ be the infinite curve defined by $\gamma(t)=(t,0)$ for $0\leq t<+\infty$. Then $\gamma\subset X_2$ and ${\rm AM}(\{ \gamma\})\geq 1$ by \eqref{eq3.16-1612}.
		
		Next, we will show that our space supports a $1$-Poincar\'e inequality. We consider two cases as below:
		\begin{enumerate}
			\item [\textbf{Case 1:}] Let $B((x,y),r)$ be an arbitrary ball in $X$ where $r<1$.
			
			Suppose that such $B((x,y),r)$ contains one point $(k,0)$ in $X_1\cap X_2$ where $k\in\mathbb N$. Then there are four segments $L_1,L_2,L_3,L_4$ in $B((x,y),r)$ defined by $L_1:=\{(z,0)\in B((x,y),r): z\leq k\}$, $L_2:=\{(z,0)\in B((x,y),r):z\geq k\}$, $L_3:=\{(k,t)\in B((x,y),r): 0\leq t\leq 1\}$, $L_4:=\{(k,t)\in B((x,y),r): -1\leq  t\leq 0\}$. Let $u$ be a continuous function on $X$ with  $|\nabla u|\in L^1_{\rm loc}(X)$. By the triangle inequality and since $B((x,y),r)=\cup_{i=1}^4L_i$, we have that
			\begin{align*}
				\dashint_{B((x,y),r)}|u-u_{B((x,y),r)}|d\mu&\leq 2 \dashint_{B((x,y),r)}|u((z,t))-u((k,0))|d\mu((z,t))\\
				&=\frac{2}{\mu(B(x,y),r)}\sum_{i=1}^4\int_{L_i} |u((z,t))-u((k,0))|d\mu((z,t))\\
				&\leq \frac{2}{\mu(B(x,y),r)}\sum_{i=1}^4\int_{L_i} \int_{[(k,0),(z,t)]_i}|\nabla u| ((z',t'))d((z',t'))\ d\mu((z,t))
			\end{align*}
			where $[(k,0),(z,t)]_i$ is the segment in $L_i$ connecting two points $(k,0)$ and $(z,t)$. We will denote $\mu((x,y))d((x,y)):=\mu((x,y))d\mathcal L^1((x,y)):=d\mu((x,y))$. By the Fubini's theorem and $\mu((z,t))=\mu((z',t'))$ for all points $(z,t), (z',t')\in L_i$, the above estimate gives that
			\begin{align}
				& \dashint_{B((x,y),r)}|u-u_{B((x,y),r)}|d\mu\notag\\
				\leq &\frac{2}{\mu(B(x,y),r)} \sum_{i=1}^4 \int_{L_i} |\nabla u|(z',t') \mu((z',t')) \left( \int_{L_i\setminus [(k,0),(z',t')]_i} d((z,t))\right) d((z',t'))\notag \\
				\leq & \frac{2}{\mu(B(x,y),r)} 2r\int_{B((x,y),r)}|\nabla u|d\mu \label{3.27-4th4}\notag
			\end{align}
			which is a $1$-Poincar\'e inequality for all balls $B((x,y),r)$ containing one point $(k,0)$ in $X_1\cap X_2$ where $k\in\mathbb N$ and $r<1$.
			If $B((x,y),r)$ does not contain any point $(k,0)$ in $X_1\cap X_2$ where $k\in\mathbb N$, then it's clear that we obtain a $1$-Poincar\'e inequality for such a ball. Thus we are done for this case.
			
			\item [\textbf{Case 2:}]  Let $B((x,y),r)$ be an arbitrary ball in $X$ where $r>1$. 
			
			Let $(x,y)$ be the center of such a ball.  We pick two points $(k_{\min},0)$ and $(k_{\max},0)$  so that $k_{\min}=\min\{ z\in \mathbb N: d((x,y),(z,0))\leq r+1\}$ and $k_{\max}=\max\{ z\in \mathbb N: d((x,y),(z,0))\leq r+1\}$. Let $\tilde B_r:= \{(x,y)\in X: k_{\min}\leq x\leq k_{\max} \}$. Then 
			\[
			\text{\rm $\mu(B((x,y),r))\approx \mu(\tilde B_r)$ and $B((x,y),r)\subset \tilde B_r=\gamma_r\cup \left( \cup_{k=k_{\min}+1}^{k_{\max}-1}I_k\right) $} 
			\]
			where $\gamma_r:=\{(x,0)\in X_2: k_{\min}\leq x\leq k_{\max} \}$. Let $u$ be a continuous function on $X$ with $|\nabla u|\in L^1_{\rm loc}(X)$. By the triangle inequality, we have that
			\begin{align}
				& \dashint_{B((x,y),r)}|u-u_{B((x,y),r)}|d\mu\notag \\
				\lesssim& 2 \dashint_{\tilde B_r}|u-u_{\tilde B_r}|d\mu\notag \\
				\lesssim& 4 \dashint_{\tilde B_r}|u-u((k_{\min},0))|d\mu\notag \\
				=& \frac{4}{\mu(\tilde B_r)}\left( \int_{\gamma_r}|u-u((k_{\min},0))|d\mu+ \sum_{k=k_{\min}+1}^{k_{\max}-1}\int_{I_k}|u-u((k_{\min},0))|d\mu\right)\notag \\
				\leq & \frac{4}{\mu(\tilde B_r)} \bigg(\int_{\gamma_r}|u-u((k_{\min},0))|d\mu+ \sum_{k=k_{\min}+1}^{k_{\max}-1} \int_{I_k}|u-u((k,0))|d\mu \notag\\
				&+ \sum_{k=k_{\min}+1}^{k_{\max}-1} |u((k,0))-u((k_{\min},0))|2^{-k+1} \bigg).\label{3.24-4th4}\notag
			\end{align}
			By the argument of \textbf{Case 1} and since 
			$\mu$ on each $\gamma_r, I_k$ is constant, the Fubini's theorem gives that for $r>1$,
			\[
			\int_{\gamma_r}|u-u((k_{\min},0))|d\mu\leq |k_{\max}-k_{\min}|\int_{\gamma_r}|\nabla u|d\mu\leq 4r \int_{\gamma_r}|\nabla u|d\mu 
			\]
			and
			\[
			\int_{I_k} |u-u((k,0))|d\mu\leq \int_{I_k}|\nabla u|d\mu\leq r \int_{I_k}|\nabla u|d\mu  {\rm \ \ for\ k_{\min}<k<k_{\max}}.
			\]
			Moreover, we have that for $r>1$,
			\begin{align*}
				\sum_{k=k_{\min}+1}^{k_{\max}-1} |u((k,0))-u((k_{\min},0))|2^{-k+1} \leq& \left(\sum_{k=k_{\min}+1}^{k_{\max}-1}  2^{-k+1} \right)\int_{\gamma_r}|\nabla u|ds \\
				=&\left(\sum_{k=k_{\min}+1}^{k_{\max}-1}  2^{-k+1} \right)\int_{\gamma_r}|\nabla u|d\mu\\
				\leq& 10 \int_{\gamma_r}|\nabla u|d\mu\\
				<& 10 r\int_{\gamma_r}|\nabla u|d\mu.
			\end{align*}
			Combining all above computations, we conclude that
			\begin{align*}
				\dashint_{B((x,y),r)}|u-u_{B((x,y),r)}|\lesssim r \dashint_{\tilde B_r}|\nabla u|d\mu.
			\end{align*}
			It's clear that there is $\lambda\geq 1$ so that $\mu(B((x,y),r))\approx \mu(\tilde B_r)\approx \lambda B((x,y),r)$ and $B((x,y),r)\subset \tilde B_r\subset \lambda B((x,y),r)$. Then this estimate implies a $1$-Poincar\'e inequality for all balls $B((x,y),r)$ with $r>1$.
		\end{enumerate}

		We now need to construct  $h\in\dot{\rm BV}(X)$ and $f\in \dot{\rm BV}(X)$  to satisfy \eqref{3.23-8thMar}. Let 
		\begin{equation}\label{3.24-9thMarch}
			h(x,y)=
			\begin{cases}
				1& {\rm \ \ if\ }(x,y)\in X_1 {\rm \ and\ }y>0,\\
				0&{\rm \ \ if\ }(x,y)\in X_2,\\
				-1& {\rm \ \ if\ }(x,y)\in X_1 {\rm \ and\ }y<0.
			\end{cases}
		\end{equation}
		Then for $k\in\mathbb N$, we have $h^\vee(\gamma(k))=h^\vee((k,0))=1$ and $h^\wedge(\gamma(k))=h^\wedge((k,0))=-1$. It follows that $\lim_{t\to+\infty}h^\vee(\gamma(t))$ and $\lim_{t\to+\infty}h^\wedge(\gamma(t))$ fail to exists because $h^\vee(\gamma(s))=h^\wedge(\gamma(s))=0$ for all $s\in\mathbb R_+\setminus \mathbb N$. Moreover, for each $k\in\mathbb N$,
		\begin{align*}
			h^*(\gamma(k))=&\lim_{r\to0}\dashint_{B((k,0),r)}hd\mu\\
			=&\lim_{r\to0}\frac{\mu(\{(x,y)\in I_{k}: 0<y<r \})-\mu(\{(x,y)\in I_{k}: -r<y<0 \})}{2^{-k}r+2r}=0
		\end{align*}
		because $\mu(\{(x,y)\in I_{k}: 0<y<r \})=\mu(\{(x,y)\in I_{k}: -r<y<0 \})=2^{-{k}}r$, and so $h^*(\gamma(t))\equiv 0$ for all $t>0$ since
		$h^*(\gamma(s))=0$ for all $s\in\mathbb R_+\setminus \mathbb N$.
		Let $h_i$ be a sequence of functions defined by 
		\[
		h_i((x,y))=
		\begin{cases}
			i d((x,y), X_2) & {\rm if\ }(x,y)\in X_1,  0\leq y\leq \frac{1}{i},\\
			0& {\rm if\ }(x,y)\in X_2,\\
			-i d((x,y), X_2) & {\rm if\ }(x,y)\in X_1, -\frac{1}{i}\leq y\leq 0,\\
			1& {\rm otherwise \ }.
		\end{cases}
		\]
		Then $h_i\to h$ in $L^1_{\rm loc}(X)$ as $i\to+\infty$ and $\int_X|\nabla h_i|d\mu=2$. Hence $h\in \dot{\rm BV}(X)$. 
		Let 
		\[
		f=\chi_{X_1}.
		\]
		By the same argument as above for the  function $h$, one can verify that $f\in\dot{\rm BV}(X)$ satisfies \eqref{3.23-8thMar}.
		The proof completes.
	\end{proof}

	\begin{example}\label{ex3.6-30th03}
		There exists an unbounded metric measure space $(X,d,\mu)$ so that the following statements hold true:
		\begin{enumerate}
			\item $\mu$ is a doubling measure as defined in \eqref{eq2.4-0912} on $(X,d)$.
			\item Our space $(X,d,\mu)$ does not  support a $1$-Poincar\'e inequality as defined in \eqref{eq2.5-2711}.
			\item There are an infinite curve $\gamma$ and three functions $f_1,f_2,f_3\in\dot {\rm BV}(X)$  such that
			\[
			{\rm AM}(\{\gamma\})\geq 1
			\]
			and
			\begin{equation}\label{3.25-30thMar}
				\begin{cases}
					\lim_{t\to+\infty}f_1^\vee(\gamma(t)) \text{\rm \ \ fails to exist,}\\
					\lim_{t\to+\infty}f_1^\wedge(\gamma(t)) \text{\rm \ \ fails to exist,}\\
					f_1^*(\gamma(t))\equiv 0 {\rm \ \ for \ all \ }t>0,
				\end{cases}
			\end{equation} 
			and
			\begin{equation}\label{3.26-30thMar}
				\begin{cases}
					\lim_{t\to+\infty}f_2^\vee(\gamma(t)) \text{\rm \ \ fails to exist,}\\
					f_2^\wedge(\gamma(t))\equiv 0  {\rm \ \ for \ all \ }t>0,\\
					\lim_{t\to+\infty}f_2^*(\gamma(t))\text{\rm \ \ fails to exist,}
				\end{cases}{\rm \ \ and\ \ }
				\begin{cases}
					\lim_{t\to+\infty}f_3^\vee(\gamma(t)) \text{\rm \ \ fails to exist,}\\
					\lim_{t\to+\infty}f_3^\wedge(\gamma(t)) \text{\rm \ \ fails to exist,}\\
					\lim_{t\to+\infty}f_3^*(\gamma(t)) \text{\rm \ \ fails to exist}.
				\end{cases}
			\end{equation} 
			
		\end{enumerate}
	\end{example}
	\begin{proof}
		\ 
		
		Let $I_k,J_k,$ where $ k\in\mathbb N,$ be subsets of $\mathbb R^2$ defined by
		\[
		I_k:=\{(k,y)\in\mathbb R^2: 0<y<1\}\setminus \left(\cup_{j\in\mathbb N}\{(k,2^{-j})\}\right)
		\]
		and
		\[
		J_k:=\{(k,y)\in\mathbb R^2: -1<y<0\}\setminus \left(\cup_{j\in\mathbb N}\{(k,-2^{-j})\}\right).
		\]
		Let $X_2=\{ (x,y)\in\mathbb R^2: y=0, x\ge  0\}$. We set 
		\[
		X:=X_2\cup \left(\cup_{k\in\mathbb N} (I_k\cup J_k) \right).
		\]
		We set $\gamma:[0,+\infty)\to X_2\subset X$ to be the infinite curve defined by $\gamma(t)=(t,0)$ for each $t\in[0,+\infty)$. We equip $X$ with the $2$-Euclidean distance $d$  and the $1$-Lebesgue measure $\mathcal L^1$. It is clear that the space is $1$-Ahlfors regular and so it is doubling. Moreover, since $I_k$ is disconnected, we obtain that $(X,d,\mathcal L^1)$ does not support any $1$-Poincar\'e inequality. As before, one can verify that ${\rm AM}(\{\gamma\})\geq 1$. Now, we only need to build three functions $f_1,f_2,f_3\in \dot{\rm BV}(X)$ satisfying \eqref{3.25-30thMar}-\eqref{3.26-30thMar}. We set
		\begin{equation}\label{eq3.27-30thMar}
			\begin{cases}
				f_1=\sum_{k\in\mathbb N} \left( \chi_{I_k} -\chi_{J_k} \right),\\
				f_2=\sum_{k\in\mathbb N}\left(\chi_{I_k}+\chi_{J_k} \right),\\
				f_3=\sum_{k\in\mathbb N}\left( \chi_{I_{2k}}-\chi_{I_{2k-1}}\right).
			\end{cases}
		\end{equation}
		Then for $k\in\mathbb N$,  we have that 
		\[
		\begin{cases}
			f_1^\vee(\gamma(k))=1, f_1^\wedge(\gamma(k))=-1, f_1^*(\gamma(k))= 0,\\
			f_2^\vee(\gamma(k))=1, f_2^\wedge({\gamma(k)})= 0, f_2^*(\gamma(k))=\frac{1}{2},\\
			f_3^\vee(\gamma(2k))=1, f_3^\wedge(\gamma(2k-1))=-1, f_3^*(\gamma(2k))=\frac{1}{4}.
		\end{cases}
		\]
		Hence $f_1,f_2,f_3$ satisfy \eqref{3.25-30thMar}-\eqref{3.26-30thMar} since $f_i^\vee(\gamma(t))=f_i^\wedge(\gamma(t))=f^*(\gamma(t))=0$ for $t\notin\mathbb N, i=1,2,3$. Moreover, these three functions belong to $\dot{\rm BV}(X)$ because $0$ is their minimal upper gradient. 
		The proof completes.
	\end{proof}
	
	In Example \ref{ex3.6-30th03}, $X$ is not connected. We now give a connected space in $\mathbb R^3$.
	\begin{example}\label{ex3.7-30thMar}
		There exists an unbounded metric measure space $(X,d,\mu)$ so that the following statements hold true:
		\begin{enumerate}
			\item $\mu$ is a doubling measure as defined in \eqref{eq2.4-0912} on $(X,d)$.
			\item Our space $(X,d,\mu)$ does not  support a $1$-Poincar\'e inequality as defined in \eqref{eq2.5-2711}.
			\item There are a family $\Gamma$ of infinite curves and three functions $f_1,f_2,f_3\in\dot {\rm BV}(X)$  such that
			\[
			{\rm AM}(\Gamma)\geq 1
			\]
			and such that for all $\gamma\in \Gamma$, we have
			\begin{equation}\label{3.27-30thMar}
				\begin{cases}
					\lim_{t\to+\infty}f_1^\vee(\gamma(t)) \text{\rm \ \ fails to exist,}\\
					\lim_{t\to+\infty}f_1^\wedge(\gamma(t)) \text{\rm \ \ fails to exist,}\\
					f_1^*(\gamma(t))\equiv 0 {\rm \ \ for \ all \ }t>0,
				\end{cases}
			\end{equation} 
			and
			\begin{equation}\label{3.28-30thMar}
				\begin{cases}
					\lim_{t\to+\infty}f_2^\vee(\gamma(t)) \text{\rm \ \ fails to exist,}\\
					f_2^\wedge(\gamma(t))\equiv 0  {\rm \ \ for \ all \ }t>0,\\
					\lim_{t\to+\infty}f_2^*(\gamma(t))\text{\rm \ \ fails to exist,}
				\end{cases}
				{\rm \ \ and\ \ }
				\begin{cases}
					\lim_{t\to+\infty}f_3^\vee(\gamma(t)) \text{\rm \ \ fails to exist,}\\
					\lim_{t\to+\infty}f_3^\wedge(\gamma(t)) \text{\rm \ \ fails to exist,}\\
					\lim_{t\to+\infty}f_3^*(\gamma(t)) \text{\rm \ \ fails to exist}.
				\end{cases}
			\end{equation} 
		\end{enumerate}
	\end{example}
	\begin{proof}
		\ 
		
		For $k\in\mathbb N$, we let $I_k,J_k\subset\mathbb R^3$ be defined by
		\[
		I_k:=\{(k,y,z)\in\mathbb R^3: 0\leq y \leq  1, 0\leq  z< 1\} \setminus  \left( \cup_{j\in\mathbb N} \{(k,y,2^{-j}): 0<y<1\} \right)  
		\]
		and
		\[
		J_k:=\{(k,y,z)\in\mathbb R^3: 0\leq y\leq  1, -1<  z\leq 0\} \setminus \left( \cup_{j\in\mathbb N} \{(k,y,-2^{-j}): 0<y<1\} \right).
		\]
		Let $X_2=\{(x,y,z)\in\mathbb R^3: z=0, x\ge 0, 0\leq y\leq 1\}$. We set 
		\[
		X:=X_2\cup \cup_{k\in\mathbb N}(I_k\cup J_k).
		\]
		We equip $X$ with the $3$-Euclidean distance $d$  and the $2$-Lebesgue measure $\mathcal L^2$. Then $\mathcal L^2$ is $2$-Ahlfors regular and so it is doubling. 
		Notice that  $I_k, J_k$ contains rectangles where two neighbour rectangles glue at two points. The $1$-modulus of all nonconstant rectifiable curves in $I_k, J_k$ that pass through a gluing point is zero, see for instance \cite[Corollary 5.3.11]{HKST15}. Let $u$ be a function on $R_1\cup R_2$ so that $u|_{R_1}=1, u|_{R_2}=0$ where $R_1,R_2$ are two open neighbouring rectangles in $I_k$. It follows that $0$ is the minimal $1$-weak upper gradient of $\tilde u$ where $\tilde u$ is the zero extension of $u$ to $X$. This gives that $X$ does not support any $1$-Poincar\'e inequality.
		
		Let $\Gamma$ be the family of all infinite curves lying in $X_2$. Then ${\rm AM}(\Gamma)\geq 1$ by the following estimate
		\[
		1\leq \int_0^1\liminf_{i\to+\infty}\int_{\gamma_y} g_idsdy\leq \liminf_{i\to+\infty}\int_0^1\int_{\gamma_y}g_idsdy \leq\liminf_{i\to+\infty}\int_Xg_id\mathcal L^2
		\]
		for all sequences of functions  $g_i:X\to[0,+\infty]$ with $\liminf_{i\to+\infty}\int_{\gamma_y}g_ids\geq 1$ for each infinite curve $\gamma_y$ defined by $\gamma_y(t):=(t,y,0)$ where $0\leq y\leq 1$ and $t\geq 0$.  Here the second inequality is given by Fatou's lemma.
		Let $f_1,f_2,f_3$ be defined as in \eqref{eq3.27-30thMar} w.r.t. our subsets  $I_k,J_k$. One can easily verify that $f_1,f_2,f_3\in\dot{\rm BV}(X)$ satisfy \eqref{3.27-30thMar}-\eqref{3.28-30thMar}, noticing again  that $0$ is the minimal $1$-weak upper gradient of each of them. The proof completes.
	\end{proof}

	\section*{Acknowledgements}
	We thank the anonymous referee for a careful reading and thoughtful
	comments on the paper.
	
	\vspace{1cm}
	
	%{\bf Data availability}: 
	%No data was used for the research described in the article.
	
	\newcommand{\etalchar}[1]{$^{#1}$}
	%\begin{thebibliography}{DMOS08}

\end{document}